\documentclass[12pt]{article}

\usepackage{amsmath}
\usepackage{amssymb}
\usepackage{amsthm}
\usepackage{amsfonts}
\usepackage{amscd}
\usepackage{graphicx}
\usepackage[T1]{fontenc}    
\usepackage[nottoc]{tocbibind}  
\usepackage[usenames,dvipsnames]{color}
\usepackage{enumitem} 
\usepackage{caption}
\usepackage[numbers,sort&compress]{natbib}
\usepackage{hyperref}
\usepackage{adjustbox}
\usepackage{fancyvrb} 
\usepackage{tikz}
\usepackage{color}
\usepackage{verbatim} 
\usepackage{framed}
\usepackage{geometry}
\usepackage[medium]{titlesec}
\usepackage{etoolbox} 
\usepackage{arydshln} 
\usepackage[capitalise]{cleveref} 
\usepackage{tocloft} 
\usepackage{appendix}

\newtheorem{theorem}{Theorem}
\newtheorem{corollary}{Corollary}

\newtheorem{lemma}{Lemma}
\newtheorem{proposition}[lemma]{Proposition}

\newtheorem{theoremext}{Theorem}

\theoremstyle{definition}

\newtheorem{example}[lemma]{Example}
\newtheorem{remark}[lemma]{Remark}


\definecolor{colLinkBlue}{RGB}{23,111,192} 
\definecolor{colCiteGreen}{RGB}{8,144,8} 
\definecolor{colP}{RGB}{170,0,255} 
\definecolor{colLP}{RGB}{255,170,255} 
\definecolor{colBrown}{RGB}{156,99,49} 
\definecolor{colGray}{RGB}{128,128,128} 
\definecolor{colGreen}{RGB}{0,204,0} 
\definecolor{colO}{RGB}{255,170,0} 
\definecolor{colLB}{RGB}{143,189,211} 
\definecolor{colB}{HTML}{6699CC} 
\definecolor{colR}{HTML}{CC6677} 

\geometry{a4paper,left=30mm,right=30mm,top=15mm,bottom=25mm}
\linespread{1.40}
\setlength{\parskip}{2mm}\setlength{\parindent}{0mm} 
\hypersetup{colorlinks,urlcolor=colCiteGreen,citecolor=colCiteGreen,linkcolor=colLinkBlue}
\setcounter{tocdepth}{1}
\bibliographystyle{plainurl}

\setlength{\bibsep}{1mm}
\titlespacing*{\section}{0pt}{0mm}{0mm}
\titlespacing*{\subsection}{0pt}{0mm}{0mm}
\titlespacing*{\paragraph}{0pt}{0mm}{0mm}
\newcommand{\myspace}{\setlength{\abovedisplayskip}{1mm}\setlength{\belowdisplayskip}{0mm}}
\captionsetup{textfont=it,labelfont=bf,belowskip=-3mm}

\usetikzlibrary{cd}
\usetikzlibrary{calc}
\tikzstyle{vertex}=[circle, draw=white, fill=black!60, minimum size=4pt, inner sep=0pt]
\tikzstyle{edge}=[black!30,line width=1.75pt]
\tikzstyle{labelsty}=[font=\scriptsize]
\tikzstyle{traj}=[line width=1pt, draw=colR,densely dashed]

\newenvironment{Mlist}{\begin{itemize}[topsep=0pt,itemsep=0pt,leftmargin=7mm]}{\end{itemize}}
\newenvironment{Menum}{\begin{enumerate}[topsep=0pt,itemsep=0pt,leftmargin=10mm,label=(\roman*),ref=(\roman*)]}{\end{enumerate}}
\newenvironment{claims}{\begin{enumerate}[topsep=0pt,itemsep=0pt,leftmargin=8mm,label=\textnormal{\textbf{(\alph*)}},ref=(\alph*)]}{\end{enumerate}}


\newcommand{\csep}[1]{\setlength{\tabcolsep}{#1}}
\newcommand{\fig}[3]{\includegraphics[height=#1cm, width=#2cm]{img/#3}}

\newcommand{\ASN}[1]{Assertion~\ref{#1}}

\newcommand{\APP}[1]{Appendix~\ref{sec:#1}}
\newcommand{\EQN}[1]{(\ref{eqn:#1})}
\newcommand{\AXM}[1]{\ref{axm:#1}}
\newcommand{\SEC}[1]{\textsection\ref{sec:#1}}
\newcommand{\RL}[2]{\Cref{lem:#1}\ref{lem:#1:#2}}
\newcommand{\RLS}[4]{Lemmas~\ref{lem:#1}\ref{lem:#1:#2} and~\ref{lem:#3}\ref{lem:#3:#4}}
\newcommand{\RP}[2]{\Cref{prp:#1}\ref{prp:#1:#2}}
\newcommand{\END}{\hfill $\vartriangleleft$}

\newcommand{\df}[1]{{\it #1}}
\newcommand{\Wlog}{without loss of generality }
\newcommand{\resp}{respectively}
\newcommand{\st}{such that }
\newcommand{\wrt}{with respect to }

\renewcommand{\c}{\colon}
\newcommand{\set}[2]{\left\{#1:#2\right\}}
\newcommand{\dto}{\dasharrow}

\newcommand{\codim}{\operatorname{codim}}
\newcommand{\Sing}{\operatorname{Sing}}

\newcommand{\C}{{\mathbb{C}}}
\newcommand{\R}{{\mathbb{R}}}
\newcommand{\Z}{{\mathbb{Z}}}

\newcommand{\F}{{\mathbb{F}}}
\newcommand{\G}{{\mathbb{G}}}
\newcommand{\E}{{\mathbb{E}}}

\newcommand{\cE}{\mathcal{E}}
\newcommand{\cP}{\mathcal{P}}
\newcommand{\cC}{\mathcal{C}}
\newcommand{\cS}{\mathcal{S}}

\newcommand{\bt}{\mathbf{t}}
\newcommand{\bu}{\mathbf{u}}

\newcommand{\tr}{\tilde{r}}
\newcommand{\ts}{\tilde{s}}
\newcommand{\tbt}{\tilde{\bt}}
\newcommand{\tbu}{\tilde{\bu}}

\newcommand{\m}{\mathfrak{m}}

\newcommand{\SE}{\mathrm{SE}(2)}
\newcommand{\sd}{d} 
\newcommand{\CM}{\cC_{_{G,\m}}}
\newcommand{\CouplerCurve}{\operatorname{CouplerCurve}}
\newcommand{\Spec}{\operatorname{Spec}}
\newcommand{\lc}{\operatorname{lc}}

\newcommand{\fT}{\mathfrak{T}}
\newcommand{\zv}{\mathbf{0}}

\newif\ifhide
\ifhide
\renewenvironment{tikzpicture}{D\comment}{\endcomment}
\renewcommand{\fig}[3]{F}
\fi

\begin{document}
\myspace

\begin{center}
{\LARGE
Irreducible components of sets of points in the plane that satisfy distance conditions
}
\\[5mm]
{\large
Niels Lubbes,
Mehdi Makhul,
Josef Schicho,
Audie Warren
}
\\[5mm]
{\large\today}
\end{center}

\begin{abstract}
For a given graph whose edges are labeled with general real numbers,
we consider the set of functions from the vertex set
into the Euclidean plane such that
the distance between the
images of neighbouring vertices is equal to the corresponding edge label.
This set of functions can be expressed as the zero set of quadratic polynomials
and our main result characterizes the number of complex irreducible components
of this zero set
in terms of combinatorial properties of the graph.
In case the complex components are three-dimensional,
then the graph is minimally rigid and the component number is a well-known invariant from rigidity theory.
If the components are four-dimensional, then they correspond to one-dimensional coupler curves of flexible planar mechanisms.
As an application, we characterize the degree of irreducible components of such coupler curves combinatorially.

\textbf{Keywords:}
sets of points in the plane,
minimally rigid graphs,
rigidity theory,
number of realizations,
general edge length assignment,
counting irreducible components,
coupler curves,
mechanical linkages,
calligraphs

\textbf{MSC Class:}
52C25, 
70B15, 
51K05, 
51F99  
\end{abstract}

\section{Introduction}
\label{sec:intro}

In \Cref{fig:mr}, we see an example of a graph with vertex set~$\{1,2,3,4\}$
and each edge~$\{i,j\}$ has some ``edge length assignment''~$\lambda_{\{i,j\}}$.
We assign to each vertex~$i$ a point~$(x_i,y_i)$ in the plane \st
\[
(x_i-x_j)^2+(y_i-y_j)^2=\lambda_{\{i,j\}}.
\]
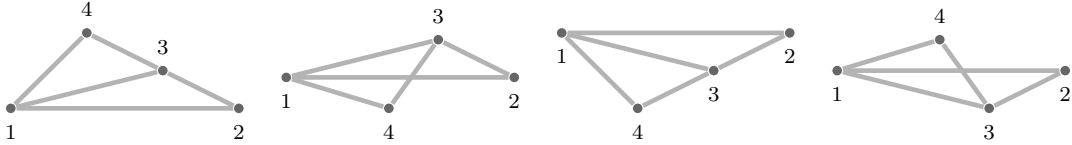
\begin{figure}[!ht]
\centering
\csep{1mm}
\begin{tabular}{cccc}
\begin{tikzpicture}[scale=1]
\node[vertex] (a) at (0,0)   [label={[labelsty]below:$1$}]{};
\node[vertex] (b) at (3,0)   [label={[labelsty]below:$2$}]{};
\node[vertex] (c) at (2,0.5) [label={[labelsty]above:$3$}]{};
\node[vertex] (d) at (1,1)   [label={[labelsty]above:$4$}]{};
\draw[edge] (a)edge(c) (b)edge(c) (c)edge(d) (a)edge(d) (a)edge(b);
\end{tikzpicture}
&
\begin{tikzpicture}[scale=1]
\node[vertex] (a) at (0,0)        [label={[labelsty]below:$1$}]{};
\node[vertex] (b) at (3,0)        [label={[labelsty]below:$2$}]{};
\node[vertex] (c) at (2,0.5)      [label={[labelsty]above:$3$}]{};
\node[vertex] (d) at (1.35,-0.41) [label={[labelsty]below:$4$}]{};
\draw[edge] (a)edge(c) (b)edge(c) (c)edge(d) (a)edge(d) (a)edge(b);
\end{tikzpicture}
&
\begin{tikzpicture}[scale=1]
\node[vertex] (a) at (0,0)    [label={[labelsty]below:$1$}]{};
\node[vertex] (b) at (3,0)    [label={[labelsty]below:$2$}]{};
\node[vertex] (c) at (2,-0.5) [label={[labelsty]below:$3$}]{};
\node[vertex] (d) at (1,-1)   [label={[labelsty]below:$4$}]{};
\draw[edge] (a)edge(c) (b)edge(c) (c)edge(d) (a)edge(d) (a)edge(b);
\end{tikzpicture}
&
\begin{tikzpicture}[scale=1]
\node[vertex] (a) at (0,0)       [label={[labelsty]below:$1$}]{};
\node[vertex] (b) at (3,0)       [label={[labelsty]below:$2$}]{};
\node[vertex] (c) at (2,-0.5)    [label={[labelsty]below:$3$}]{};
\node[vertex] (d) at (1.35,0.41) [label={[labelsty]above:$4$}]{};
\draw[edge] (a)edge(c) (b)edge(c) (c)edge(d) (a)edge(d) (a)edge(b);
\end{tikzpicture}
\end{tabular}
\caption{
A $\lambda$-compatible realization of the graph
is up to translations and rotations equivalent to one of four representatives.}
\label{fig:mr}
\end{figure}
The complex zero set of these quadratic polynomials consists
of four 3-dimensional components. Indeed,
if we additionally set $(x_1,y_1)=(0,0)$ and $(x_2,y_2)=(1,0)$,
then this system of quadratic equations has four solutions.
In other words, the graph admits up to rotations and translations
four ``$\lambda$-compatible realizations'' into the plane.
We shall make these concepts precise in \SEC{main}, but for now notice that
a representative of each component is illustrated in \Cref{fig:mr}.

The graph in \Cref{fig:mr} is an example of a ``minimally rigid graph''
whose ``number of realizations'' equals four.
Such minimally rigid graphs
can be characterized in terms of the number of vertices and edges of subgraphs \cite{1927,1970}.
The number of realizations of a minimally rigid graph with $n$ vertices is, over the complex numbers,
the same for almost all edge length assignments, and is therefore a graph invariant~\cite{2019}.
This number is at most $\binom{2n-4}{n-2}\approx 4^n$ \cite{2004}
and can be recovered from the graph in terms of a purely combinatorial algorithm~\cite{2018-num}.
However, the number of \emph{real} realizations (drawings in~$\R^2$)
does depend on the choice of edge length assignment and remains an open problem \citep[\textsection8]{2019}.

If we remove an edge from a minimally rigid graph
with general choice of edge length assignment~$\lambda$,
then we expect that the resulting graph admits up to translations and rotations a
1-dimensional choice of $\lambda$-compatible realizations.
In \Cref{fig:coupler}, we illustrate
two realizations of such a graph, where the edge $\{1,0\}$ was removed.
\begin{figure}[!hb]
\centering
\csep{10mm}
\begin{tabular}{cc}
\begin{tikzpicture}
\node[inner sep=0pt] at (1.6,0) {\includegraphics{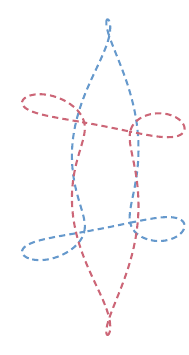}};
\node[vertex] (a) at (0,0)   [label={[labelsty]below:$1$}] {};
\node[vertex] (b) at (3,0)   [label={[labelsty]below:$2$}] {};
\node[vertex] (c) at (1,2)   [label={[labelsty]above:$3$}] {};
\node[vertex] (d) at (2,1.5) [label={[labelsty]below:$4$}] {};
\node[vertex] (e) at (1,1)   [label={[labelsty]below:$0$}] {};
\draw[edge] (a)edge(b) (b)edge(d) (d)edge(c) (c)edge(a) (c)edge(e) (e)edge(d);
\end{tikzpicture}
&
\begin{tikzpicture}
\node[inner sep=0pt] at (1.6,0) {\includegraphics{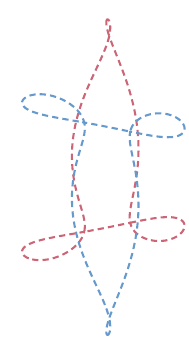}};
\node[vertex] (a) at (0,0)     [label={[labelsty]below:$1$}] {};
\node[vertex] (b) at (3,0)     [label={[labelsty]below:$2$}] {};
\node[vertex] (c) at (1,2)     [label={[labelsty]above:$3$}] {};
\node[vertex] (d) at (2,1.5)   [label={[labelsty]below:$4$}] {};
\node[vertex] (e) at (1.8,2.6) [label={[labelsty]above:$0$}] {};
\draw[edge] (a)edge(b) (b)edge(d) (d)edge(c) (c)edge(a) (c)edge(e) (e)edge(d);
\end{tikzpicture}
\end{tabular}
\caption{The 1-dimensional set of realizations consists of two irreducible components
that are isomorphic to the red and blue components of the coupler curve of the vertex~$0$.}
\label{fig:coupler}
\end{figure}

The blue and red curves represent all realizations of the vertex~$0$
with the conditions $(x_1,y_1)=(0,0)$ and $(x_2,y_2)=(1,0)$.
In this example, almost all realizations of the vertices $\{0,1,2\}$
uniquely determines the realizations of the remaining vertices.
Therefore, the set of all $\lambda$-compatible realizations has two irreducible components.
The blue and red curves are projections of these two components onto the realizations of the vertex~$0$.

We recover this number of components in a combinatorial manner as follows.
First, we decompose the graph into a union of minimally rigid subgraphs that are not contained in
a bigger minimally rigid subgraph. In \Cref{fig:coupler}, these ``max-tight subgraphs''
are the triangle~$\{0,3,4\}$, and the edges $\{1,2\}$, $\{1,3\}$ and~$\{2,4\}$.
The number of realizations of the triangle is $2$
and each of the three edges has realization number~$1$.
Our main result, namely \Cref{thm:main}, now implies that the number
of irreducible components of the set of $\lambda$-compatible realizations
is equal to the product of the number of realizations of the
max-tight subgraphs: $2\cdot 1\cdot 1\cdot 1=2$.
Note that in this article, we count the components over the complex numbers.
The number of real connected components remains an open problem (see \Cref{rmk:real}).

We may think of the vertices in \Cref{fig:coupler} as revolute joints and after we pin down the vertices~$1$ and~$2$,
the graph moves as the vertex~$0$ traverses the red ``coupler curve'' component.
A key observation is that
the corresponding realizations of the triangle~$\{0,3,4\}$
are all equivalent up to rotations and translations.
\Cref{cor:main} implies that the converse holds as well:
two $\lambda$-compatible realizations
belong to the same component if and only if each of the max-tight subgraphs
are equivalent up to rotations and translations.
\Cref{cor:cal} characterizes the degrees
of the components of coupler curves and addresses \citep[Conjecture~2 and Open problem~3]{2023-cal}.

If we remove more than one edge from some minimally rigid graph,
then the irreducible components
of the set of compatible realizations has higher dimension.
\Cref{cor:inv} shows that
the degree and geometric genera of each component has to be the same.

\subsection*{State of the art}

The investigation of rigid structures can be traced back to
James Clerk Maxwell~\cite{1864} and there is currently again a considerable interest in rigidity
theory due its applications in
robotics, architecture, molecular biology and sensor network localization.
We refer to \cite{2002} and \citep[Introduction]{2022-ub} for further references to such applications.

Realizations of minimally rigid graphs have been studied
in Euclidean spaces and spheres of arbitrary dimension.
Theoretical and algorithmic aspects of the number of realizations
have been investigated in \cite{2021-iso,2019,2018-num,2023-cal,2023-sphere}.
Of particular interest are the globally rigid graphs, namely graphs
that up to rotations, translations and reflections admit a unique realization \cite{2005,2023-glob}.
Bounds on the number of realizations have been investigated in
\cite{2004,2022-ub,2021-real,2020-low,2010,2023-dewar}.
We remark that for carefully chosen edge lengths minimally rigid
graphs may admit infinitely many realizations~\cite{2022-yet}.

Graphs that admit a 1-dimensional set of
realization for some choice of edge lengths
are special cases of mechanical linkages with revolute joints.
Their origins can be traced back to at least 1785,
when James Watt used such a linkage for his steam engine.
Moving graphs remain of interest to engineers
and their coupler curves have been studied extensively \cite{1990}.
In \cite{2023-cal} we provide bounds on degrees and geometric genera
of coupler curves of mechanical linkages in terms of numbers of realizations
(see also \cite{1963} and \citep[Figure~5]{2004}).
We improve these bounds in \Cref{cor:cal}.

\subsection*{Overview}

In \SEC{main}, we present the assertions of the main result \Cref{thm:main}
and \Cref{cor:main}.
We show in \SEC{pair} that \Cref{thm:main} is implied
by the irreducibility of certain fiber products associated to graphs
that have not too many edges (the proof of a key lemma is deferred to \APP{fiber}).
In \SEC{red}, we prove \Cref{thm:main,cor:main}
by showing that these fiber products are irreducible
via induction on the number of vertices.
This concludes the main content of this article.
In the remaining \SEC{app} and \APP{scheme},
we present applications of our main results, namely \Cref{cor:inv,cor:cal}.
In particular, we combinatorially characterize
the number of irreducible components of coupler curves of planar mechanisms
and their degrees.
In \SEC{app} is also a short discussion on the real connected components
of sets of realizations.
We conclude in \SEC{dim} with some remarks on the higher dimensional analogs of our main results.

\section{Statements of the main results}
\label{sec:main}

In this section, we state our main results \Cref{thm:main,cor:main},
whose proof will be concluded in \SEC{red}.
For an overview of the proof, see the introductions of \SEC{pair} and \SEC{red}.

\subsection*{Terminology needed for main results}
Suppose that $P\c X\to \{\text{True},~\text{False}\}$ is a predicate about some variety~$X$.
We say that $P$ holds for a \df{general} point if
the set $\set{x\in X}{P(x)=\text{False}}$ is contained in a lower dimensional subvariety of~$X$.

The \df{squared distance function}~$d\c\C^2\to \C$ is defined as
\[
\sd(x,y)=(x_1-y_1)^2+(x_2-y_2)^2.
\]
The \df{transformation group}~$T$ is defined as the set of maps
\[
\C^2\to\C^2,\quad (x_1,x_2) \mapsto (c\,x_1-s\,x_2+u,~ s\,x_1+c\,x_2+v)
\]
\st $c,s,u,v\in\C$ and $c^2+s^2=1$.
We refer to elements of $T$ as \df{transformations.}
Notice that transformations \st $c,s,u,v\in\R$ define
the subgroup $\SE\subset T$ of directed isometries of $\R^2$.
Indeed, for all $t\in T$,
\[
\sd(x,y)=\sd(t(x),t(y)).
\]
In this article a \df{graph} $G=(V,E)$ with \df{vertex set}~$V$ and \df{edge set}~$E$
is a simple undirected finite graph.
We call $G'\subset G$ a subgraph if $V'\subset V$ and $E'\subset E$,
where the subset operators $\subset$ and $\subseteq$ are considered the same.

If $X$ and $Y$ are sets, then $Y^X$ denotes the set of functions $X\to Y$ and
for all $f\in Y^X$ and $x\in X$, we denote the evaluation~$f(x)$ by $f_x$.
By abuse of notation, we denote
\[
\C^{2V}:=(\C^2)^{V}.
\]
We shall refer to elements of $\C^{2V}$ and $\C^E$ as \df{realizations} and \df{edge length assignments}, \resp.
The \df{edge map} of a graph $G=(V,E)$ is defined as
\[
\cE_G\c \C^{2V}\to\C^E,\quad r\mapsto \bigl(\{u,v\}\mapsto d(r_u,r_v)\bigr).
\]
The \df{set of $\lambda$-compatible realizations} of $G$ for $\lambda\in \C^E$ is defined as the fiber
\[
\cE_G^{-1}(\lambda)=\set{r\in \C^{2V}}{\sd(r_u,r_v)=\lambda(\{u,v\}) \text{ for all }\{u,v\}\in E}.
\]
If $E=\varnothing$, then $\cE_G^{-1}(\C^\varnothing)=\C^{2V}$.
The \df{component number} $c(G,\lambda)$ for $G$ \wrt $\lambda\in\C^E$
is defined as the number of irreducible components of~$\cE_G^{-1}(\lambda)$.

We denote by $\cE_G^{-1}(\lambda)/T$ the orbit set, where
the transformation group~$T$ acts on
the set of realizations~$\C^{2V}$ as follows:
\[
T\times \C^{2V}\to \C^{2V},\quad (t,r)\mapsto t\circ r.
\]
We call a graph $G$ \df{minimally rigid} if there exists nonzero~$n\in\Z_{> 0}$
\st for general~$\lambda\in \C^E$,
\[
c(G):=|\cE_G^{-1}(\lambda)/T|=n.
\]
We refer to $c(G)$ as the \df{number of realizations} of~$G$.
We remark that if $G$ is minimally rigid, then for general $\lambda\in\C^E$
each irreducible component of $\cE_G^{-1}(\lambda)$ is a $T$-orbit and
thus
\[
c(G)=c(G,\lambda).
\]
Following the dictionary in \cite{2008}, we call a graph $G$ \df{sparse}
if all its subgraphs~$(V', E')$ \st $|V'|\geq 2$ satisfy
\[
|E'| \leq 2|V'| - 3.
\]
We call a graph $G=(V,E)$ \df{tight} if it is sparse and either $|V|=1$ or $|E|=2|V| - 3$.

The following theorem of Pollaczek-Geiringer \cite{1927} and independently Laman \cite{1970}
characterizes minimally rigid graphs in combinatorial terms.

\begin{theoremext}[Geiringer, Laman]
\label{thm:laman}
~\\
A graph $G$ is \df{minimally rigid} if and only if $G$ is tight.
\end{theoremext}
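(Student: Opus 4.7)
The plan is to identify the algebraic notion of minimal rigidity from the excerpt with the classical infinitesimal notion, and then run a Henneberg-style induction on the combinatorial side.

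\emph{Translation via the rigidity matrix.} First I would differentiate $\cE_G$ at a realization $r$ to obtain, up to a factor of $2$, the rigidity matrix $R(G,r)\in\C^{E\times 2V}$ whose row indexed by $\{u,v\}$ contains $\pm(r_u-r_v)$ in the four columns of $u$ and $v$ and vanishes elsewhere. The derivatives of the three-parameter family $T$ always lie in $\ker R(G,r)$, so the generic fiber of $\cE_G$ has dimension at least $3$, and $T$ acts with finite stabilizers on the dense open locus of non-degenerate realizations. Hence $\cE_G^{-1}(\lambda)/T$ is finite and nonempty for general $\lambda$ if and only if $\cE_G$ is dominant of relative dimension $3$, which is equivalent to the conjunction of (i) $|E|=2|V|-3$ and (ii) the $|E|$ rows of $R(G,r)$ being linearly independent for generic $r$.

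\emph{Minimally rigid implies tight.} Condition (i) gives the edge count. For sparsity, suppose a subgraph $G'=(V',E')$ with $|V'|\geq 2$ had $|E'|>2|V'|-3$. The rows of $R(G,r)$ indexed by $E'$ are supported in the $2|V'|$ columns of $V'$ and their common kernel in $\C^{2V'}$ contains the three-dimensional space of infinitesimal isometries, so at most $2|V'|-3<|E'|$ of them can be independent, contradicting (ii).

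\emph{Tight implies minimally rigid.} Here I would induct on $|V|$ via Henneberg moves. Any tight $G$ with $|V|\geq 3$ satisfies $\sum_v \deg v = 2|E| = 4|V|-6$, and sparsity forbids vertices of degree $\leq 1$, so some vertex $v$ has degree $2$ or $3$. In the degree-$2$ case, $G\setminus v$ is tight, and appending $v$ contributes two new rows that form an invertible $2\times 2$ block on the two new columns for generic $r$, preserving (ii). In the degree-$3$ case, a standard sparsity argument yields two non-adjacent neighbours $u_1,u_2$ of $v$ such that $(G\setminus v)\cup\{\{u_1,u_2\}\}$ is tight; a short computation on the $3\times 6$ block of new entries then shows that generic placement of $v$ restores row independence after removing the added edge $\{u_1,u_2\}$.

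\emph{Main obstacle.} The linear-algebra propagation of (ii) through the Henneberg moves is routine, and the reverse implication is essentially dimension counting. The genuine difficulty is the combinatorial decomposition lemma — proving that a valid non-edge $\{u_1,u_2\}$ always exists in the degree-$3$ reduction — which requires a careful case analysis using the sparsity inequalities on all subgraphs of $G\setminus v$.
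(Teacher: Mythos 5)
The paper does not prove this statement at all; it is cited as the classical Pollaczek-Geiringer--Laman theorem and used as a black box, so there is no proof in the paper to compare yours against. Your sketch is the standard argument: translate minimal rigidity into dominance of $\cE_G$ with relative dimension $3$ (equivalently, generic full row rank of the rigidity matrix together with $|E|=2|V|-3$), extract sparsity from a rank bound on the block of rows supported in a subgraph, and then run Henneberg induction for the converse. The outline is sound, and you have correctly isolated where the real work is. Two places deserve to be flagged as genuine gaps rather than routine details. First, in the degree-$3$ reduction you need not only that some pair of neighbours of $v$ is non-adjacent (easy: otherwise $\{v,u_1,u_2,u_3\}$ spans $K_4$ with $6>2\cdot 4-3$ edges, contradicting sparsity), but also that at least one such non-edge $\{u_i,u_j\}$ can be added to $G\setminus v$ without breaking sparsity; this second part is a nontrivial statement about the lattice of tight subgraphs and needs its own argument (you acknowledge this, but no proof is offered). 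Second, the claim that the 1-extension preserves generic row independence after deleting $\{u_i,u_j\}$ is not the short computation you suggest; the standard proof specializes $v$ to the line through $u_i$ and $u_j$ and compares ranks in the limit, and that specialization argument must be given. Finally, your translation assumes $T$ acts with generically trivial stabilizers, which requires $|V|\ge 2$; the case $|V|=1$ is tight by the paper's convention and trivially minimally rigid, but it is outside your dimension-$3$ dichotomy and should be dispatched separately.
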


We call a subgraph $G'\subset G$ \df{max-tight} if it is tight and there exists
no tight subgraph~$G''$ \st $G'\subsetneq G''\subset G$.

We call $\{G_i\}_{i\in I}=\{(V_i,E_i)\}_{i\in I}$ a \df{max-tight decomposition} for a graph~$G=(V,E)$
if $G_i\subset G$ is a max-tight subgraph for all $i\in I$,
$V=\cup_{i\in I}V_i$, $E=\cup_{i\in I} E_i$
and $E_i\cap E_j=\varnothing$ for all different $i,j\in I$.

See \cite{2025pyrigi} for a
\href{https://pyrigi.github.io/PyRigi/userguide/api/graph.html#pyrigi.graph.Graph.rigid_components}{software implementation}
of an algorithm that for a given graph outputs its max-tight decomposition.

\subsection*{Main results and examples}

Our main result is that the component number of a sparse graph
\wrt a general edge length assignment
is equal to the product of the numbers of realizations of its max-tight subgraphs.

\begin{theorem}
\label{thm:main}
If $G=(V,E)$ is a sparse graph and $\lambda\in\C^E$ a general edge length assignment,
then there exists a unique max-tight decomposition $\{G_i\}_{i\in I}$ for $G$ and
\[
c(G,\lambda)=\prod_{i\in I} c(G_i).
\]
\end{theorem}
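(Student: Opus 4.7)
My plan begins with the combinatorial claim. The key property of sparse graphs is matroid-like: if $G_1,G_2\subset G$ are tight subgraphs with $E_1\cap E_2\neq\varnothing$, then $G_1\cup G_2$ is tight. A shared edge forces $|V_1\cap V_2|\geq 2$, and using $|V_1|+|V_2|=|V_1\cup V_2|+|V_1\cap V_2|$ one obtains
\[
|E_1\cup E_2|=2|V_1\cup V_2|+2|V_1\cap V_2|-6-|E_1\cap E_2|.
\]
Combined with the sparseness bounds $|E_1\cup E_2|\leq 2|V_1\cup V_2|-3$ and $|E_1\cap E_2|\leq 2|V_1\cap V_2|-3$, both must be equalities, so $G_1\cup G_2$ and $G_1\cap G_2$ are tight. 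Hence every edge of $G$ lies in a unique maximal tight subgraph, isolated vertices form their own singleton max-tight pieces, and the max-tight decomposition exists and is unique. Two analogous counts give the structural properties needed below: any two distinct max-tight pieces share at most one vertex, and three max-tight pieces cannot pairwise share three distinct vertices (else their union would again be tight, contradicting maximality). Thus the incidence structure between max-tight pieces and their shared vertices is a forest.

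\textbf{Product formula.} I would prove $c(G,\lambda)=\prod_{i\in I}c(G_i)$ by induction on $|I|$. The case $|I|=1$ is immediate from the definition of minimal rigidity and the observation that for general $\lambda$ each component of $\cE_G^{-1}(\lambda)$ is a single $T$-orbit. For the step, the forest structure lets me pick a leaf piece $G_k$ meeting $G':=\bigcup_{i\neq k}G_i$ in at most one vertex $v$. Then
\[
\cE_G^{-1}(\lambda) \;=\; \cE_{G_k}^{-1}(\lambda|_{E_k}) \;\times_{\C^2}\; \cE_{G'}^{-1}(\lambda|_{E\setminus E_k}),
\]
the fiber product being along evaluation at $v$ (a direct product if no such $v$ exists). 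By induction the second factor has $\prod_{i\neq k}c(G_i)$ irreducible components. Since $G_k$ is minimally rigid and $\lambda|_{E_k}$ is general, the first factor is a disjoint union of $c(G_k)$ irreducible three-dimensional $T$-orbits. Because $T$ acts transitively on $\C^2$ via translations, the evaluation map from any such orbit onto $\C^2$ is surjective with irreducible one-dimensional (rotation-stabilizer) fibers, so fiber-producting with any irreducible component of $\cE_{G'}^{-1}$ again yields an irreducible variety. Consequently the components of $\cE_G^{-1}(\lambda)$ are indexed bijectively by pairs of component choices, closing the induction.

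\textbf{Main obstacle.} The delicate step is justifying that these fiber products really are irreducible of the expected dimension; one must rule out accidental splittings or dimension jumps when the evaluation map at $v$ degenerates over the image of $\cE_{G'}^{-1}$ inside $\C^2$. Making this rigorous for arbitrary sparse graphs---rather than only those in which every gluing happens at a single, well-controlled vertex---likely requires inducting on $|V|$ instead of on $|I|$, together with an auxiliary lemma asserting that the relevant evaluation-fibers are irreducible and equidimensional over a dense open subset of $\C^2$. This is precisely where sparseness is essential: it keeps $2|V|-|E|$ under control and prevents the fiber product from acquiring extra components along exceptional loci, while generality of $\lambda$ confines the bad behavior to a strictly lower-dimensional subvariety.
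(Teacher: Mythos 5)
Your existence-and-uniqueness argument for the max-tight decomposition is correct and matches the paper's Lemma~1(c): two tight subgraphs sharing an edge have tight union and tight intersection, and the arithmetic you give is sound. However, the structural claim that follows --- that the incidence pattern between max-tight pieces and shared vertices forms a forest --- is false, and this breaks the whole induction on $|I|$. Your counting rules out \emph{triangles} among pieces (three pieces pairwise sharing three distinct vertices would union to a tight graph, contradicting maximality), but it does \emph{not} rule out cycles of length four or more: four pieces pairwise sharing four distinct vertices around a cycle have a union with $|E| = 2|V| - 4 < 2|V| - 3$, which is sparse but not tight, so no contradiction arises. The paper's own Example~2 (the graph $G$ of \Cref{fig:main2}, $K_{3,3}$ minus an edge) is a concrete counterexample: every max-tight piece is a single edge, and \emph{every} edge shares both endpoints with other pieces, so there is no ``leaf'' piece $G_k$ meeting the rest in at most one vertex, and your induction step cannot begin.

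You are candid about this weakness in your final paragraph, and your diagnosis is exactly right: the correct route is to induct on $|V|$ rather than on $|I|$, and the fiber-product irreducibility claim needs to be isolated as its own lemma. That is precisely what the paper does, but the machinery it builds is substantially heavier than what you sketch. The paper introduces the subgraph map $\cS_G\c \C^{2V}\times T^I\to\prod_i\C^{2V_i}$, shows $c(\cE_G)=c(\cS_G)\cdot\prod_i c(\cE_{G_i})$ (Lemma~5), reduces the theorem to $c(\cS_G)=1$, which in turn follows if the fiber square $P(G)=U_V\times T^I\times_{\cS_G}U_V\times T^I$ is irreducible (Lemmas~7, 8). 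Irreducibility of $P(G)$ is then proved by induction on $|V|$ using a comparison map $P(G)\to P(G^\m)$ to the fiber square of a smaller graph obtained by inverting a Henneberg 0- or 1-extension, together with a fiber-by-fiber analysis (Lemmas~11--14) that controls the dimension and irreducibility of the comparison map's fibers via the criterion of Lemma~10. Your second main gap is embedded in your fiber-product step even when a leaf does exist: you assert that fibering a $T$-orbit against an arbitrary irreducible component over evaluation at a vertex yields an irreducible variety, but constant-dimensional irreducible fibers over an irreducible base do \emph{not} in general imply the total space is irreducible --- one must additionally bound the locus of jumping fibers, which is exactly the content of the paper's Lemma~10 and the codimension estimate in Lemma~12. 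So the proposal is a reasonable first idea, but both the combinatorial backbone (the forest structure) and the geometric glue (irreducibility of the fiber product) are missing the essential ingredients.
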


We remark that if $G$ is not sparse and $\lambda\in\C^E$ a general edge length assignment,
then $c(G,\lambda)=0$ as a straightforward consequence of \Cref{thm:laman}.

If $\lambda$ is a general edge length assignment of a graph~$G$,
then any pair of $\lambda$-compatible realizations of $G$
belong to the same component of the set of $\lambda$-compatible realizations
if and only if
for each max-tight $G'\subset G$ the induced pair of realizations of $G'$
are equal up to composition with transformations.

\begin{corollary}
\label{cor:main}
If $G=(V,E)$ is a sparse graph, $\{G_i\}_{i\in I}=\{(V_i,E_i)\}_{i\in I}$ its max-tight decomposition
and $\lambda\in\C^E$ a general edge length assignment,
then the following are equivalent for all $\lambda$-compatible realizations $r$ and $s$ in $\cE_G^{-1}(\lambda)$:
\begin{Menum}
\item\label{cor:main:i}
The realizations $r$ and $s$ belong to the same irreducible component of~$\cE_G^{-1}(\lambda)$.
\item\label{cor:main:ii}
For all $i\in I$ there exists a transformation $t\in T$ \st $t\circ r|_{V_i}=s|_{V_i}$.
\item\label{cor:main:iii}
For all $i\in I$, the restricted realizations $r|_{V_i}$ and $s|_{V_i}$
belong to the same irreducible component of~$\cE_{G_i}^{-1}(\lambda|_{E_i})$.
\end{Menum}
\end{corollary}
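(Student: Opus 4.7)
The plan is to deduce the equivalence from \Cref{thm:main} and the fiber-product structure that underlies its proof. I would first settle \ref{cor:main:ii} $\Leftrightarrow$ \ref{cor:main:iii} directly: each max-tight $G_i$ is minimally rigid by \Cref{thm:laman}, so for general $\lambda|_{E_i}$ every irreducible component of $\cE_{G_i}^{-1}(\lambda|_{E_i})$ is a single $T$-orbit, as observed right after the definition of $c(G)$. Hence two realizations of $G_i$ lie in a common irreducible component if and only if they are related by a transformation, which is exactly the per-index content of \ref{cor:main:ii} $\Leftrightarrow$ \ref{cor:main:iii}.

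For \ref{cor:main:i} $\Rightarrow$ \ref{cor:main:iii}, I would use that the restriction morphism $\Phi_i\c\cE_G^{-1}(\lambda)\to\cE_{G_i}^{-1}(\lambda|_{E_i})$ sends each irreducible component of its source to an irreducible subset of its target, whose closure lies in a unique irreducible component. Hence an irreducible component of $\cE_G^{-1}(\lambda)$ containing both $r$ and $s$ forces $r|_{V_i}$ and $s|_{V_i}$ to lie in a common component of $\cE_{G_i}^{-1}(\lambda|_{E_i})$ for every $i$. This also produces a set map
\[
\psi\c\{\text{irreducible components of }\cE_G^{-1}(\lambda)\}\to\prod_{i\in I}\{\text{irreducible components of }\cE_{G_i}^{-1}(\lambda|_{E_i})\},
\]
whose domain and codomain each have cardinality $\prod_{i\in I}c(G_i)$ by \Cref{thm:main} and the equality $c(G_i,\lambda|_{E_i})=c(G_i)$.

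The main obstacle is the converse \ref{cor:main:iii} $\Rightarrow$ \ref{cor:main:i}. The strategy is to upgrade the cardinality match to an honest bijection by showing that for general $\lambda$ the preimage $\Phi^{-1}\bigl(\prod_{i\in I} D_i\bigr)$ of any tuple of components $(D_i)_{i\in I}$ under the combined restriction $\Phi=(\Phi_i)_{i\in I}$ is irreducible. Since $\cE_G^{-1}(\lambda)$ is the union of the $\prod_{i\in I}c(G_i)$ such preimages, the count from \Cref{thm:main} then forces each $\Phi^{-1}\bigl(\prod_{i\in I}D_i\bigr)$ to be a full irreducible component of $\cE_G^{-1}(\lambda)$. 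Once this is in hand, any $r,s$ satisfying \ref{cor:main:iii} with common components $D_i$ both lie in the single component $\Phi^{-1}\bigl(\prod_{i\in I}D_i\bigr)$, establishing \ref{cor:main:i}. The required fiber-product irreducibility is precisely the technical content developed in \SEC{pair} and \SEC{red} in service of \Cref{thm:main}, so the corollary will fall out of the same machinery with no additional work.
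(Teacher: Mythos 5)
Your proposal is correct in substance and rests on the same machinery as the paper — the irreducibility of the fiber square $P(G)$, and through it of the general fiber of the subgraph map $\cS_G$ — but you assemble the final step differently. The paper proves \ref{cor:main:i}~$\Leftrightarrow$~\ref{cor:main:ii} directly via \Cref{lem:cor}, a short commutative-diagram argument that translates between irreducible components of $\cE_G^{-1}(\lambda)$ and irreducible components of fibers of $\cS_G$, then closes with \ref{cor:main:ii}~$\Leftrightarrow$~\ref{cor:main:iii} exactly as you do. You instead take \Cref{thm:main} as a black box and run a counting argument: the restriction map $\Phi=(\Phi_i)_{i\in I}$ decomposes $\cE_G^{-1}(\lambda)$ as the disjoint union of the preimages $\Phi^{-1}\bigl(\prod_i D_i\bigr)$ over tuples of components, and matching the count $\prod_i c(G_i)$ against the number of tuples forces each nonempty preimage to be a single irreducible component. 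That is a clean reorganization which makes the bijection between components of $\cE_G^{-1}(\lambda)$ and tuples of components explicit, and it avoids restating the diagram chase.

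The one place I would push back is the closing claim that the irreducibility of $\Phi^{-1}\bigl(\prod_i D_i\bigr)$ ``falls out with no additional work.'' What \Cref{prp:PG} and \Cref{lem:cS} deliver directly is that the general fiber of $\cS_G$ is irreducible. To reach the irreducibility of $\Phi^{-1}\bigl(\prod_i D_i\bigr)$ you still need the bridging observation that, because each $D_i$ is a single $T$-orbit for general $\lambda$, this preimage equals $\pi\bigl(\cS_G^{-1}(y)\bigr)$ where $\pi\c\C^{2V}\times T^I\to\C^{2V}$ is the projection and $y=(y_i)_{i\in I}$ is any tuple with $y_i\in D_i$, and that such $y$ is general in the image of $\cS_G$ when $\lambda$ is general. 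That bridge is precisely what \Cref{lem:cor} packages in the paper, so your route does not bypass it — it repackages it. With those details filled in, the argument is complete.
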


We give some examples to explain the main theorem.

\begin{example}
\label{exm:main1}
Suppose that $G$ is the graph in \Cref{fig:main1}.
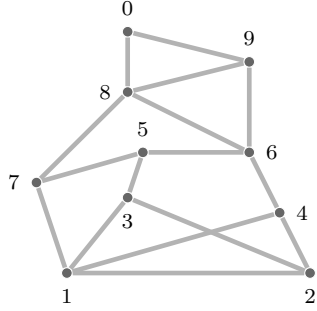
\begin{figure}[!ht]
\centering
\begin{tikzpicture}[scale=0.4]
\node[vertex] (a) at ( 0  ,0  )  [label={[labelsty]below:$1$}] {};
\node[vertex] (b) at ( 8  ,0  )  [label={[labelsty]below:$2$}] {};
\node[vertex] (c) at ( 2  ,2.5)  [label={[labelsty]below:$3$}] {};
\node[vertex] (d) at ( 7  ,2  )  [label={[labelsty]right:$4$}] {};
\node[vertex] (e) at ( 2.5,4  )  [label={[labelsty]above:$5$}] {};
\node[vertex] (f) at ( 6  ,4  )  [label={[labelsty]right:$6$}] {};
\node[vertex] (g) at (-1  ,3  )  [label={[labelsty]left :$7$}] {};
\node[vertex] (h) at ( 2  ,6  )  [label={[labelsty]left :$8$}] {};
\node[vertex] (i) at ( 6  ,7  )  [label={[labelsty]above:$9$}] {};
\node[vertex] (j) at ( 2  ,8  )  [label={[labelsty]above:$0$}] {};
\draw[edge] (a)edge(b) (a)edge(c) (a)edge(d) (b)edge(d) (b)edge(c)
            (a)edge(g) (g)edge(e) (e)edge(c) (e)edge(f) (f)edge(d)
            (g)edge(h) (h)edge(f) (h)edge(i) (i)edge(f) (h)edge(j) (j)edge(i);
\end{tikzpicture}
\caption{A sparse graph with $8$ max-tight subgraphs and component number~$16$.}
\label{fig:main1}
\end{figure}

Let us determine its component number~$c(G,\lambda)$ for general
edge length assignment~$\lambda$.
We observe that $G$ is sparse.
Let $G_1$ and $G_2$ be the max-tight subgraphs that are spanned by the vertices $\{1,2,3,4\}$
and $\{0,8,9,6\}$, \resp.
The remaining six max-tight subgraphs $G_3,\ldots,G_8$ of $G$ correspond to the
edges~$\{1,7\}$, $\{7,8\}$, $\{7,5\}$, $\{5,3\}$, $\{5,6\}$ and~$\{6,4\}$.
Hence, $\{G_1,\ldots,G_8\}$ is a max-tight decomposition for~$G$.
Recall from \Cref{fig:mr} that the numbers of realizations~$c(G_1)$ and~$c(G_2)$
are both equal to~$4$.
For the remaining subgraphs, we have $c(G_1)=\ldots=c(G_2)=1$.
We now conclude from \Cref{thm:main} that $c(G,\lambda)=c(G_1)\cdots c(G_8)=16$.
\END
\end{example}

\begin{example}
\label{exm:main2}
Let $G$ be the graph in \Cref{fig:main2},
which is obtained by removing the edge $\{1,0\}$ from the complete bipartite
$K_{3,3}$ graph.

\begin{figure}[!ht]
\centering
\begin{tikzpicture}[scale=0.4]
\node[vertex] (a) at ( 1,0)  [label={[labelsty]below:$1$}] {};
\node[vertex] (b) at ( 7,0)  [label={[labelsty]below:$2$}] {};
\node[vertex] (c) at ( 2,3)  [label={[labelsty]above:$5$}] {};
\node[vertex] (d) at ( 6,3)  [label={[labelsty]above:$0$}] {};
\node[vertex] (e) at (-1,5)  [label={[labelsty]above:$3$}] {};
\node[vertex] (f) at ( 9,5)  [label={[labelsty]above:$4$}] {};
\draw[edge] (a)edge(b) (a)edge(e)
            (c)edge(b) (c)edge(d) (c)edge(e)
            (f)edge(b) (f)edge(d) (f)edge(e);
\end{tikzpicture}
\caption{A sparse graph with component number $1$.}
\label{fig:main2}
\end{figure}
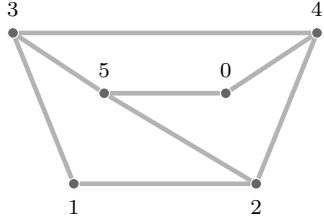

Each max-tight subgraph of $G$ consists of an edge and thus its component number~$c(G,\lambda)$
for general edge length assignment~$\lambda$ is equal to~$1$.
\END
\end{example}

\section{Pairs of realizations as a fiber product}
\label{sec:pair}

In this section, we construct for a sparse graph~$G=(V,E)$ a certain fiber product~$P(G)$ whose elements
correspond to pairs of realizations of~$G$ \st
for each max-tight subgraph of~$G$
their induced pair of realizations are equal up to composition with transformations.

For example, if $G$ is as in \Cref{fig:coupler},
then we require for a real element of $P(G)$
that the induced pair of realizations of
the max-tight triangle~$\{0,3,4\}$
are related by a directed isometry (and thus a transformation).
Notice that the two realizations depicted in \Cref{fig:coupler}
do not define an element of~$P(G)$ since the triangles are related by
a reflection (which is not a transformation).

Suppose we are given a realization of the graph~$G$ together with
a transformation assigned to each of its max-tight subgraphs.
The ``subgraph map''~$\cS_G$ assigns to this data
a compatible realization of each max-tight subgraph.
For example, the two realizations in \Cref{fig:coupler}
belong roughly speaking to different fibers of~$\cS_G$.
The set $P(G)$ is contained in the fiber product associated to the map~$\cS_G$.

The edge map~$\cE_G$ fits together with the subgraph map~$\cS_G$ into a certain commutative diagram
and the number of components of the general fiber of~$\cE_G$
is equal to the product of these ``fiber component numbers'' for $\cS_G$
and all the edge maps associated to the max-tight subgraphs of~$G$.
Our main result \Cref{thm:main} is equivalent to the fiber component
number of $\cS_G$ to be equal to one which in turn is equivalent to
the fiber product~$P(G)$ being irreducible.
The irreducibility of $P(G)$ will be established in the next section~\SEC{red}.

Let us proceed with making the concepts in the above section overview precise.

Suppose that $X$ and $Y$ are (not necessarily irreducible) varieties.
We call a rational map~$f\c X\dto Y$ between varieties
a \df{morphism} if it is everywhere defined and write $f\c X\to Y$.
We call a morphism~$f$ \df{dominant} if its image~$f(X)$ is Zariski dense in $Y$.
If $Y$ is irreducible, then we define the \df{fiber component number}~$c(f)$
as the number of irreducible components of $f^{-1}(y)$ for general~$y\in Y$.

\begin{lemma}
\label{lem:em}
Suppose that $G=(V,E)$ is a graph.
\begin{claims}
\item\label{lem:em:a}
The graph $G$ is sparse if and only if its edge map $\cE_G$ is dominant.
\item\label{lem:em:b}
If $G$ is sparse, then for general edge length assignment $\lambda\in\C^E$,
\[
c(G,\lambda)=c(\cE_G).
\]
If $G$ is also tight, then $c(G)=c(\cE_G)$.
\item\label{lem:em:c}
If $G$ is sparse, then it admits a unique max-tight decomposition
$\{(V_i,E_i)\}_{i\in I}$.
\end{claims}
\end{lemma}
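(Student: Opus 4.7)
The plan is to handle the three claims in sequence, with \ref{lem:em:a} carrying the geometric weight.

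For \ref{lem:em:a}, I would compare dimensions. The transformation group $T$ is $3$-dimensional and $\cE_G$ is $T$-invariant, so for any subgraph $G'=(V',E')$ with $|V'|\geq 2$ the image $\cE_{G'}(\C^{2V'})$ is contained in a variety of dimension at most $2|V'|-3$. If $G$ is not sparse, some such subgraph has $|E'|>2|V'|-3$, making $\cE_{G'}$ non-dominant; since the projection $\C^E\to\C^{E'}$ sends the image of $\cE_G$ into that of $\cE_{G'}$, $\cE_G$ cannot be dominant either. Conversely, given sparse $G$, I would extend it to a tight supergraph $\tilde G=(V,\tilde E)$ on the same vertex set by greedily adjoining edges that preserve sparseness. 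By \Cref{thm:laman}, $\tilde G$ is minimally rigid, so for general $\lambda$ the fiber $\cE_{\tilde G}^{-1}(\lambda)$ is a nonempty finite union of $3$-dimensional $T$-orbits inside a $2|V|$-dimensional domain; thus $\cE_{\tilde G}$ has image dimension $|\tilde E|=2|V|-3$ and is dominant, and dominance of $\cE_G$ follows from the surjective projection $\C^{\tilde E}\to\C^E$.

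Claim \ref{lem:em:b} is almost definitional once \ref{lem:em:a} is established. Dominance ensures a general $\lambda\in\C^E$ lies in the image, so $c(G,\lambda)$ and $c(\cE_G)$ coincide by the very definition of $c(\cE_G)$. If moreover $G$ is tight, the generic fiber $\cE_G^{-1}(\lambda)$ has pure dimension $2|V|-|E|=3$; $T$ acts on it with generically trivial stabilizer (any realization placing two chosen vertices at distinct points is fixed only by the identity), so each $T$-orbit is an irreducible $3$-dimensional subvariety of the fiber. Dimension forces the fiber to be a finite disjoint union of such orbits, one per irreducible component, giving $c(G)=|\cE_G^{-1}(\lambda)/T|=c(G,\lambda)=c(\cE_G)$.

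For \ref{lem:em:c}, I would argue combinatorially. Existence is immediate: each edge $e=\{u,v\}$ lies in the tight two-vertex subgraph $(\{u,v\},\{e\})$ and hence in some max-tight subgraph by finiteness, while each vertex isolated in $G$ gives a max-tight singleton $(\{v\},\varnothing)$ (any tight supergraph of it would, once $v$ is removed, violate sparseness via the bound $|E'|\leq 2|V'|-5$). For uniqueness, I would show that two distinct max-tight subgraphs share no edge. Suppose tight subgraphs $H_1=(V_1,E_1)$ and $H_2=(V_2,E_2)$ of $G$ share an edge, so $|V_1\cap V_2|\geq 2$; inclusion--exclusion gives
\[
|E_1\cup E_2|=2|V_1\cup V_2|+2|V_1\cap V_2|-6-|E_1\cap E_2|.
\]
Sparseness of $G$ applied to $(V_1\cap V_2,E_1\cap E_2)$ and to $(V_1\cup V_2,E_1\cup E_2)$ yields $|E_1\cap E_2|\leq 2|V_1\cap V_2|-3$ and $|E_1\cup E_2|\leq 2|V_1\cup V_2|-3$, and combining these with the displayed identity forces both inequalities to be equalities, so $H_1\cup H_2$ is tight. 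If $H_1\neq H_2$ were both max-tight, this tight union would properly contain each, contradicting maximality; hence max-tight subgraphs partition the edge set and, together with the forced singletons at isolated vertices, produce a unique decomposition.

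The main obstacle I foresee is the extension step in \ref{lem:em:a}: showing that every sparse graph enlarges to a tight graph on the same vertex set. This needs either a short greedy augmentation with a case analysis or an appeal to the standard matroidal fact that sparse graphs are the independent sets and tight graphs the bases of the $(2,3)$-sparsity matroid. Once this is granted, \Cref{thm:laman} closes \ref{lem:em:a}, and the remaining parts follow along the lines above.
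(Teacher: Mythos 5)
Your proposal is correct and follows essentially the same route as the paper: (a) reduces to \Cref{thm:laman} together with the fact that a sparse graph extends to a tight graph on the same vertex set, (b) is a bookkeeping step once dominance is known, and (c) rests on the observation that two tight subgraphs sharing an edge have a tight union. The main difference is that you supply details the paper leaves implicit — the dimension count for the ``only if'' half of (a), the trivial-stabilizer/$T$-orbit argument for the tight case of (b), and the inclusion–exclusion proof that the union of two edge-sharing tight subgraphs is tight — all of which are sound.
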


\begin{proof}
\ref{lem:em:a}
By definition, a sparse graph is either tight or obtained by removing edges from some tight graph
and thus this assertion is a direct consequence of \Cref{thm:laman}.

\ref{lem:em:b}
Since $\cE_G$ is dominant by \ASN{lem:em:a} and $\C^E$ is irreducible,
the fiber component number of $\cE_G$ is well-defined as it is the number of components of the ``generic fiber''
(see \citep[Definition~1.5 in Chapter~3]{2002L}).

\ref{lem:em:c}
This assertion follows from the fact that if two tight graphs have a common edge,
then their union is again tight.
\end{proof}

A \df{tight decomposition} for a graph $G=(V,E)$
is defined as
\[
\Omega:=\{G_i\}_{i\in I}=\{(V_i,E_i)\}_{i\in I}
\]
\st
$G_i\subset G$ is a tight subgraph for all $i\in I$,
$V=\cup_{i\in I}V_i$, $E=\cup_{i\in I}E_i$
and $E_i\cap E_j=\varnothing$ for all different $i,j\in I$.

The \df{subgraph map} associated to the tight decomposition~$\Omega$ is defined as
\[
\cS_\Omega\c \C^{2V}\times T^I\to \prod_{i\in I}\C^{2V_i},\quad (r,\bt)\mapsto (\bt_i\circ r|_{V_i})_{i\in I}.
\]
If $G$ is sparse and $\Omega$ its max-tight decomposition, then we write
\[
\cS_G:=\cS_\Omega.
\]
Indeed, the max-tight decomposition of a sparse graph is unique by \RL{em}{c}.

A subgraph map fits together with an edge map into the following diagram:
\begin{equation}
\label{eqn:dia}
\begin{tikzcd}
\C^{2V}\times T^I\arrow[d, "\pi",swap]\arrow[rr, "\cS_\Omega"] & & \prod_{i\in I}\C^{2V_i}\arrow[d,"\cP_G=\prod_{i\in I}\cE_{G_i}"]\\
\C^{2V}\arrow[r,"\cE_G"] & \arrow[r,"\mu","\cong"']\C^E & \prod_{i\in I}\C^{E_i}
\end{tikzcd}
\end{equation}
where
$\cP_G\bigl((r^i)_{i\in I}\bigr)=\bigl(\cE_{G_i}(r^i)\bigr)_{i\in I}$ is a product of edge maps,
$\mu(\lambda)=(\lambda|_{E_i})_{i\in I}$ is an isomorphism
and
$\pi(r,\bt)=r$ is the projection to the first component.

\begin{lemma}
\label{lem:dia}
If $G=(V,E)$ is a sparse graph with tight decomposition
\[
\Omega=\{G_i\}_{i\in I}=\{(V_i,E_i)\}_{i\in I},
\]
then the diagram at \EQN{dia} commutes.
\end{lemma}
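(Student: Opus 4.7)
The plan is to verify commutativity by a direct diagram chase on an arbitrary element $(r,\bt)\in\C^{2V}\times T^I$, using as the sole non-trivial ingredient the fact, recorded in \SEC{main}, that every transformation $t\in T$ preserves the squared distance function, i.e.\ $\sd(t(x),t(y))=\sd(x,y)$ for all $x,y\in\C^2$.

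First I would compute the image of $(r,\bt)$ along the top-right path. Applying $\cS_\Omega$ gives the tuple of realizations $(\bt_i\circ r|_{V_i})_{i\in I}$. Applying $\cP_G=\prod_{i\in I}\cE_{G_i}$ then produces, at index $i\in I$, the edge-length function on $E_i$ sending $\{u,v\}\in E_i$ to $\sd(\bt_i(r_u),\bt_i(r_v))$.

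Next I would compute the image of $(r,\bt)$ along the left-bottom path. Projection $\pi$ gives $r$, then $\cE_G$ produces the function $\{u,v\}\mapsto \sd(r_u,r_v)$ on $E$, and finally $\mu$ restricts this to each $E_i$. At index $i\in I$ one obtains the function $\{u,v\}\in E_i\mapsto \sd(r_u,r_v)$. Since $\bt_i\in T$ preserves $\sd$, the two expressions $\sd(\bt_i(r_u),\bt_i(r_v))$ and $\sd(r_u,r_v)$ coincide for every $\{u,v\}\in E_i$, so the two index-$i$ components agree. This being true for every $i\in I$, the two paths yield the same element of $\prod_{i\in I}\C^{E_i}$.

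There is essentially no obstacle: the lemma is a bookkeeping check once one observes that the condition $\Omega$ is a tight decomposition (in particular that $E=\cup_{i\in I}E_i$ with the pieces disjoint) is exactly what makes $\mu$ a well-defined isomorphism whose component at~$i$ is restriction to~$E_i$. The only point requiring a line of justification is the $T$-invariance of $\sd$, which is immediate from the definition of~$T$.
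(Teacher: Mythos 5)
Your proof is correct and follows essentially the same reasoning as the paper's: the disjointness of the $E_i$ makes $\mu$ a well-defined isomorphism, and the $T$-invariance of $\sd$ gives the equality of the two compositions. You merely spell out the element-level diagram chase that the paper states more tersely.
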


\begin{proof}
Since $E_i\cap E_j=\varnothing$ for all different $i,j\in I$,
we find that $\mu$ is an isomorphism.
Applying transformations to realizations of each of the tight subgraphs of $G$
does not change the squared distances.
We deduce that the diagram commutes.
\end{proof}

\begin{lemma}
\label{lem:fc}
Suppose that $f\c X\to Y$ is a continuous map between the topological spaces $X$ and~$Y$
and let $\G$ be a connected topological group.
If $f$ is $\G$-equivariant \wrt continuous and fixed-point free group actions
\[
\cdot\c \G\times X\to X
\quad\text{and}\quad
\cdot\c \G\times Y\to Y,
\]
then the number of connected components of $f^{-1}(y)$ is equal to
the number of connected components of $f^{-1}(\set{g\cdot y}{g\in \G})$ for all $y\in Y$.
\end{lemma}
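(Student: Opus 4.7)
The plan is to exhibit an explicit bijection, induced by the $\G$-action, between the connected components of $f^{-1}(y)$ and the connected components of $f^{-1}(O)$, where $O:=\set{g\cdot y}{g\in \G}$. First I would observe that $O$ is connected, as the continuous image of $\G$ under the map $g\mapsto g\cdot y$, and that $\G$-equivariance yields $f^{-1}(O)=\set{g\cdot x}{g\in\G,~x\in f^{-1}(y)}$. Writing $\{C_\alpha\}_\alpha$ for the connected components of $f^{-1}(y)$ and setting $K_\alpha:=\set{g\cdot x}{g\in\G,~x\in C_\alpha}$, each $K_\alpha$ is connected as the continuous image of the connected space $\G\times C_\alpha$ under the action map.

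Next I would show that the sets $K_\alpha$ are pairwise disjoint, which is the only place fixed-point freeness enters: if $g_1\cdot x_1=g_2\cdot x_2$ with $x_i\in C_{\alpha_i}\subseteq f^{-1}(y)$, then applying $f$ gives $g_2^{-1}g_1\cdot y=y$, so $g_1=g_2$ and hence $x_1=x_2$, forcing $\alpha_1=\alpha_2$. Thus $f^{-1}(O)=\bigsqcup_\alpha K_\alpha$ as a set, and the assignment $C_\alpha\mapsto K_\alpha$ is a well-defined injection of the set of components of $f^{-1}(y)$ into the set of connected subsets of $f^{-1}(O)$.

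The main step is to verify that each $K_\alpha$ is actually a full connected component of $f^{-1}(O)$, not merely a connected subset of one. Let $L$ be any connected component of $f^{-1}(O)$. The orbit-map argument shows $L$ is $\G$-invariant: for each $x\in L$ the orbit $\set{g\cdot x}{g\in\G}$ is connected and contains $x$, hence lies in $L$. Translating a point of $L$ into $f^{-1}(y)$ shows $L\cap f^{-1}(y)\neq\varnothing$, and any component $C_\alpha$ of $f^{-1}(y)$ that meets $L$ is contained in $L$ (being a connected subset of the ambient space meeting the component $L$), whence $K_\alpha=\G\cdot C_\alpha\subseteq L$ by $\G$-invariance. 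This yields $L=\bigsqcup_{\alpha\in A_L}K_\alpha$ for some nonempty index set $A_L$.

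The hard part is ruling out $|A_L|\geq 2$. In the applications of the present paper the ambient spaces are algebraic varieties over $\C$ in the classical topology and hence locally connected; consequently connected components are clopen, each $K_\alpha\subseteq L$ is open in $L$, and the connectedness of $L$ together with the disjointness of the $K_\alpha$ forces $|A_L|=1$. This establishes the bijection $C_\alpha\mapsto K_\alpha$ between the two sets of components and finishes the proof.
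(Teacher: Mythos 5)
Your orbit-decomposition route is genuinely different from the paper's. The paper argues by contradiction via a path-lifting step: given two distinct components $C_a$, $C_b$ of $f^{-1}(y)$ contained in the same component $B_k$ of $f^{-1}(\G\cdot y)$, it picks a path $p$ in $B_k$ from $C_a$ to $C_b$, lifts $f\circ p$ to a continuous $g\colon[0,1]\to\G$ with $g(t)\cdot y=f(p(t))$ and $g(0)=g(1)=e$, and translates $p$ back into $f^{-1}(y)$ via $h(t)=g(t)^{-1}\cdot p(t)$, contradicting $C_a\neq C_b$. The first half of your argument (the assignment $C_\alpha\mapsto K_\alpha$ and the decomposition $L=\bigsqcup_{\alpha\in A_L}K_\alpha$) corresponds to the paper's preliminary observation that each $C_i$ lies in a unique $B_j$; the real content in both proofs is excluding $|A_L|\geq 2$.

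Your final step has a gap. Local connectedness of $f^{-1}(O)$ does make its connected components open, but it does not make $K_\alpha$ open inside $L$: the $K_\alpha$ are merely connected subsets partitioning $L$, and a locally connected space can certainly be partitioned into connected non-open pieces, as $[0,1]=\{0\}\sqcup(0,1]$ already shows. What actually makes $K_\alpha$ open is a different regularity fact, available in the paper's applications: the orbit map $\G\to\G\cdot y$, $g\mapsto g\cdot y$, is a homeomorphism (here $\G=T^I$ acts algebraically and freely, so the orbit map is a bijective morphism of smooth varieties, hence an isomorphism). With that, the translation $\varrho\colon f^{-1}(O)\to f^{-1}(y)$, $z\mapsto g_z^{-1}\cdot z$ where $g_z\cdot y=f(z)$, is continuous, and $K_\alpha=\varrho^{-1}(C_\alpha)$ is open because $C_\alpha$ is open in the locally connected $f^{-1}(y)$. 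It is worth noting that the paper's own proof uses the same hidden input, since the continuity of the lift $g$ is precisely the continuity of the inverse of the orbit map along $f\circ p$ (and it additionally needs path-connectedness of $B_k$). Your candid acknowledgment that extra hypotheses are required is well placed, but the specific reduction of the openness of $K_\alpha$ to local connectedness alone does not close the argument as written.
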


\begin{proof}
Let $\G\cdot y:=\set{g\cdot y}{g\in \G}$ and let $\{C_i\}_{i\in I}$, $\{B_j\}_{j\in J}$
be pairwise different connected components \st
\[
f^{-1}(y)=\cup_{i\in I} C_i
\qquad\text{and}\qquad
f^{-1}(\G\cdot y)=\cup_{j\in J} B_j.
\]
We need to show that $|I|=|J|$.
Notice that $f^{-1}(y)\subset f^{-1}(\G\cdot y)$ and thus for all~$i\in I$ there exists $j\in J$
\st
\[
C_i\subset f^{-1}(y)\cap B_j.
\]
Now suppose by contradiction that there exists different $a,b\in I$ and $k\in J$
\st $C_a,C_b\subset B_k$.
In this case, there exists a continuous path
$p\c [0,1]\to B_k$ \st $p(0)\in C_a$ and $p(1)\in C_b$.
By assumption $f(B_k)\subset \G\cdot y$ and $\G$ acts fixed-point free on~$Y$.
Hence, there exists a unique continuous path $g\c [0,1]\to \G$ \st
\begin{Mlist}
\item $(f\circ p)(t)=g(t)\cdot y$ for all $t\in[0,1]$, and
\item $g(0)=g(1)$ is the identity in~$\G$.
\end{Mlist}
Due to $f$ being $\G$-equivariant, there exists a
continuous path $h\c [0,1]\to f^{-1}(y)$ that is defined as $h(t)=g(t)^{-1}\cdot p(t)$.
Because $h(0)\in C_a$ and $h(1)\in C_b$, we find that $C_a=C_b$
and thus arrive at a contradiction.
We conclude that there exists a bijection $\varphi\c I\to J$ \st $C_i=f^{-1}(y)\cap B_{\varphi(i)}$
for all $i\in I$.
\end{proof}

\begin{lemma}
\label{lem:c}
If $G$ is a sparse graph with max-tight decomposition~$\{G_i\}_{i\in I}$,
then the fiber component number of the edge map for $G$ is equal to
\[
c(\cE_G)=c(\cS_G)\cdot \prod_{i\in I} c(\cE_{G_i}).
\]
\end{lemma}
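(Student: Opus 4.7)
The plan is to count the irreducible components of the fiber of $\cP_G\circ\cS_G$ over $\mu(\lambda)$ in two different ways, where $\lambda\in\C^E$ is a general edge length assignment. Since the diagram \EQN{dia} commutes and $\mu$ is an isomorphism, this fiber coincides with $\pi^{-1}(\cE_G^{-1}(\lambda))$. The projection~$\pi$ has as fibers copies of the group~$T^I$, and because the conic $c^2+s^2=1$ is smooth and irreducible over~$\C$, both $T$ and~$T^I$ are irreducible and connected. Consequently $\pi^{-1}(\cE_G^{-1}(\lambda))=\cE_G^{-1}(\lambda)\times T^I$ has the same number of irreducible components as $\cE_G^{-1}(\lambda)$, which equals~$c(\cE_G)$ by \RL{em}{b}.

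Next I would compute the same count by going around the diagram through~$\cP_G$. Since $\cP_G=\prod_{i\in I}\cE_{G_i}$, we have $\cP_G^{-1}(\mu(\lambda))=\prod_{i\in I}\cE_{G_i}^{-1}(\lambda|_{E_i})$. Each~$G_i$ is tight and hence minimally rigid by \Cref{thm:laman}, so $\cE_{G_i}^{-1}(\lambda|_{E_i})$ is a union of $c(\cE_{G_i})$ irreducible components, each a $T$-orbit, by the remark following the definition of~$c(G)$. Taking products, $\cP_G^{-1}(\mu(\lambda))$ decomposes into $\prod_{i\in I}c(\cE_{G_i})$ pairwise disjoint irreducible components, each of them a single $T^I$-orbit under the componentwise action $\bu\cdot(r^i)_{i\in I}=(\bu_i\circ r^i)_{i\in I}$.

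The main obstacle is to prove that for each such $T^I$-orbit~$W$ the preimage $\cS_G^{-1}(W)$ contributes exactly $c(\cS_G)$ irreducible components. To achieve this, I would equip the source with the $T^I$-action $\bu\cdot(r,\bt):=(r,\bu\bt)$ and note that $\cS_G$ is $T^I$-equivariant by a direct check from its definition. The source action is plainly free, and the target action is free on the open subset of non-degenerate realizations, so \Cref{lem:fc} applies after restricting to this open set, which contains~$W$ for general~$\lambda$. The lemma then gives that the component count of $\cS_G^{-1}(y)$ is constant on the orbit $W=T^I\cdot y$, so it equals the generic value~$c(\cS_G)$ provided $W$ is a generic orbit. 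That this is so follows from $\cP_G$ being dominant and $T^I$-equivariant: as $\lambda$ varies, the orbits~$W$ cover a dense open subset of $\prod_{i\in I}\C^{2V_i}$, so for general~$\lambda$ each~$W$ lies in the Zariski open locus where the fiber component count of~$\cS_G$ attains its generic value.

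Finally, since the components of $\cP_G^{-1}(\mu(\lambda))$ are pairwise disjoint for general~$\lambda$, so are their $\cS_G$-preimages, and summing the contributions yields $c(\cE_G)=c(\cS_G)\cdot\prod_{i\in I}c(\cE_{G_i})$, which is the claimed identity.
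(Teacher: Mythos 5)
Your proof is correct and follows essentially the same route as the paper: both go around the commuting diagram at \EQN{dia}, use that $\pi$ has irreducible fibers~$T^I$ (so $\pi$ does not change the component count), observe that the connected components of the general fiber of $\cP_G$ are $T^I$-orbits, and apply \Cref{lem:fc} to the $T^I$-equivariant map $\cS_G$ to obtain $c(\cP_G\circ\cS_G)=c(\cP_G)\cdot c(\cS_G)$. The paper additionally treats the degenerate case $E=\varnothing$ separately (since \Cref{lem:fc} requires the $T^I$-action on the target to be fixed-point free) and explicitly invokes Sard's theorem to identify connected with irreducible components on the general fiber --- minor technicalities that you leave implicit.
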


\begin{proof}
Let us consider the diagram at \EQN{dia} and notice that the morphisms $\pi$ and $\mu$ are dominant.
By \RL{em}{a} the edge maps $\cE_G$ and $\cE_{G_i}$ of the sparse graphs $G$ and $G_i$
are dominant for all $i\in I$, and this implies that $\cP_G$ is dominant.
Since the diagram commutes by \Cref{lem:dia}, it follows that the subgraph map $\cS_G$ must be dominant as well.
Hence, the fiber component numbers are well-defined for $\cE_G$, $\cS_G$, $\cP_G$
and $\cE_{G_i}$ for all $i\in I$.
It follows from Sard's theorem that the general fiber is smooth and
thus a component is connected if and only if it is irreducible.
Because the transformation group~$T$ is connected
and the fibers of edge maps associated to tight graphs are $T$-orbits,
it follows that the general fiber of $\cP_G$ has cardinality
\[
c(\cP_G)=\prod_{i\in I}c(\cE_{G_i}).
\]
We claim that $c(\cP_G\circ\cS_G)=c(\cP_G)\cdot c(\cS_G)$.
In general, this is not true for dominant morphisms. For example, if $f\c\C^2\to\C^2$ and $g\c\C^2\to\C$ are defined as $f(x,y):=(x^2,y)$ and $g(x,y):=y$,
then $1=c(g\circ f)\neq c(f)\cdot c(g)=2$. Now, let us set
\[
G=:(V,E),~~~
\{G_i\}_{i\in I}=:\{(V_i,E_i)\}_{i\in I},~~~
X:=\C^{2V}\times T^I,~~~
Y:=\prod_{i\in I}\C^{2V_i},~~~
\G:=T^I.
\]
Notice that $\G$ is a connected topological group with group operation
\[
\bu\circ\bt:=(\bu_i\circ\bt_i)_{i\in I}.
\]
We consider the group actions
\[
\G\times X\to X,~(\bu,(r,\bt))\mapsto (r,\bu\circ\bt)
\quad\text{and}\quad
\G\times Y\to Y,~(\bu,(r^i)_{i\in I})\mapsto(\bu_i\circ r^i)_{i\in I}.
\]
If $E=\varnothing$, then the main assertion holds as a straightforward consequence of the definitions.
In the remainder of the proof, we assume $E\neq\varnothing$
so that $\G$ acts fixed-point free on both $X$ and $Y$.
Since the subgraph map~$\cS_G\c X\to Y$ is $\G$-equivariant
and the fibers of~$\cP_G$ are $\G$-orbits, it follows from
\Cref{lem:fc} that
\[
c(\cP_G\circ\cS_G)=c(\cP_G)\cdot c(\cS_G).
\]
Since the image of a connected component via~$\pi$ is again connected
and $\mu$ is an isomorphism, we find that
\[
c(\cE_G)=c(\mu\circ \cE_G\circ\pi).
\]
We know from \Cref{lem:dia} that the diagram at \EQN{dia} commutes
and thus
\[
c(\mu\circ \cE_G\circ\pi)=c(\cP_G\circ\cS_G).
\]
We conclude that $c(\cE_G)=c(\cP_G)\cdot c(\cS_G)$ as was to be shown.
\end{proof}

The \df{set of embeddings}~$U_V\subset \C^{2V}$
for vertex set~$V$
is defined as the set of realizations~$r\in \C^{2V}$
\st for all sparse graphs $G'=(V',E')$ \st $V'\subset V$,
the Jacobian matrix of the edge map~$\cE_{G'}$
has rank $|E'|$ at $r|_{V'}$.

\begin{remark}
If $M$ is the Jacobian matrix of an edge map~$\cE_G$,
then $\frac{1}{2}\cdot M$ is known as the \df{rigidity matrix}
(see \citep[\textsection1.3.2 and \textsection18.1]{2019hb}).
Elements of the set of embeddings are also known as
\df{regular configurations} (see \citep[\textsection17.2.2]{2019hb}).
\END
\end{remark}

\begin{lemma}[set of embeddings]
\label{lem:U}
Suppose that $V$ is a vertex set.
\begin{claims}
\item\label{lem:U:a}
The set of embeddings~$U_V$ is a non-empty Zariski open subset of $\C^{2V}$.
\item\label{lem:U:b}
If $\Omega=\{G_i\}_{i\in I}$ is a tight decomposition
of a sparse graph~$G=(V,E)$, then
the restricted subgraph map $\cS_\Omega|_{U_V\times T^I}$
is a submersion between the complex manifolds~$U_V\times T^I$ and~$\cS_\Omega(U_V\times T^I)$.
\item\label{lem:U:c}
If $r\in U_V$ is a realization and $u,v,w\in V$ are pairwise different vertices, then
the points $r_u,r_v,r_w\in\C^2$ are not collinear.
\end{claims}

\end{lemma}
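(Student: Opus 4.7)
For (a), the plan is to observe that for each sparse subgraph $G'=(V',E')$ with $V'\subseteq V$, requiring the Jacobian of $\cE_{G'}$ at $r|_{V'}$ to attain the rank specified in the definition of $U_V$ is a Zariski open condition on $r\in\C^{2V}$ (non-vanishing of at least one minor of the appropriate size). Because $V$ is finite, only finitely many such subgraphs $G'$ arise, so $U_V$ is a finite intersection of Zariski open sets and therefore Zariski open. For (c), the plan is to apply the defining condition of $U_V$ to the triangle $G_\triangle=(\{u,v,w\},\{\{u,v\},\{v,w\},\{u,w\}\})$, which is tight since $|E_\triangle|=3=2\cdot 3-3$; the condition forces the Jacobian of $\cE_{G_\triangle}$ at $r|_{\{u,v,w\}}$ to have rank~$3$. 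A direct calculation, placing three putatively collinear points on a coordinate axis, shows that the row corresponding to the outer edge becomes the sum of the rows of the two inner edges, so the rank drops to at most~$2$; the contradiction rules out collinearity.

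For (b), the plan is to establish that $\cS_\Omega|_{U_V\times T^I}$ has locally constant rank, so that the constant rank theorem simultaneously endows $\cS_\Omega(U_V\times T^I)$ with a complex manifold structure and makes $\cS_\Omega$ a submersion onto this image. Constancy along the $T^I$-factor of the source is immediate from the $T^I$-equivariance of $\cS_\Omega$ under the action $\bu\cdot(r,\bt)=(r,\bu\circ\bt)$ on the source and the coordinate-wise $T^I$-action on the target, so the rank of $d\cS_\Omega$ at $(r,\bt)$ depends only on $r$. For constancy as $r$ ranges over $U_V$, the plan is to compute $\ker d\cS_\Omega$ directly: a tangent vector $(\dot r,\dot\bt)$ lies in the kernel iff for every $i\in I$ the restriction $\dot r|_{V_i}$ coincides with the infinitesimal $T$-action on $r|_{V_i}$ induced by $\bt_i^{-1}\dot\bt_i$. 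Applying (c), together with the edge case $|V'|=2$ (which forces $r_u\neq r_v$ for distinct $u,v\in V$), any infinitesimal transformation vanishing on two or more distinct points of $V$ is already trivial, so the compatibility conditions on overlaps $V_i\cap V_j$ cut the Lie algebra of $T^I$ down to a subspace whose dimension depends only on the combinatorics of $\Omega$. Since $\dot r$ is then determined on $\bigcup_i V_i=V$, the kernel dimension is constant throughout $U_V\times T^I$.

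The main obstacle is the kernel computation in the middle step: carefully tracking how the infinitesimal transformations indexed by different $i$ are constrained by the overlaps $V_i\cap V_j$, and verifying that the resulting dimension is independent of the specific $r\in U_V$. The commutative diagram~\EQN{dia} provides a useful consistency check, since the rank of $d\cS_\Omega$ must be compatible with the rank of $d\cE_G$ at $r\in U_V$ and with the rank of $d\cP_G$ at the image point. Once the kernel dimension is pinned down, the submersion statement and the complex manifold structure on $\cS_\Omega(U_V\times T^I)$ follow from the constant rank theorem applied to the complex analytic map $\cS_\Omega|_{U_V\times T^I}$.
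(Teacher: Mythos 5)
Your treatments of (a) and (c) coincide with the paper's: (a) is a finite intersection of open non-vanishing conditions on minors, and (c) reduces to the observed rank drop of the triangle edge map at a collinear configuration.

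For (b), your route (establish locally constant rank, then invoke the constant rank theorem) differs from the paper's, and the crucial step is exactly the one you flag as ``the main obstacle'': you must show that the dimension of $\ker d\cS_\Omega$ is the same at every point of $U_V\times T^I$, not merely on some Zariski-open subset of it. Your sketch reduces this to showing that the system of compatibility constraints on $(\dot\bt_i)_{i\in I}$ --- requiring the infinitesimal motions indexed by different $i\in I$ to agree at all shared vertices --- has rank depending only on the combinatorics of $\Omega$. That each \emph{individual} pairwise constraint has fixed corank $0$, $2$, or $3$ (by distinctness and non-collinearity from (c)) is correct, but constancy of the rank of the \emph{entire} system does not follow from this alone: the constraints indexed by different overlapping pairs may interact, and the dimension of the joint kernel a priori depends on the positions of the $r_v$. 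As it stands, the claim that the kernel dimension is combinatorial is an assertion, not a proof.

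The paper sidesteps this by proving something stronger and simpler: that $d\cS_\Omega$ is \emph{surjective} at every point of $U_V\times T^I$. The commutative diagram \EQN{dia} is not a ``consistency check'' but the engine of the argument: differentiating it, one sees that both $d\cE_G$ and $d\cP_G=\prod_i d\cE_{G_i}$ are surjective on $U_V$ (this is exactly what the rank condition in the definition of $U_V$ supplies for sparse and tight subgraphs, respectively), and the key ingredient is the identification $\ker d\cE_{G_i}=dg\bigl(\{0\}\times\fT_{\bt_i}(T)\bigr)$ --- the kernel of each tight subgraph's edge map consists precisely of the velocity fields of transformations (the ``infinitesimal rigidity'' fact). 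These two surjectivities plus the kernel identification force $d\cS_\Omega$ to be surjective, and surjectivity trivially gives constant rank. If you want to complete your constant-rank argument, you will essentially have to re-derive this, so it is cleaner to aim directly for surjectivity via the diagram.
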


\begin{proof}
\ref{lem:U:a}
The set of realizations~$r\in \C^{2V}$
\st the Jacobian matrix of an edge map has lower rank at~$r$
can be expressed in terms of the vanishing of finitely many determinants of submatrices.
Since there are finitely many graphs whose vertex set is contained in~$V$,
we find that $U_V$ is Zariski open.

\ref{lem:U:b}
As a direct consequence of the definitions, we have
\[
\cS_\Omega(U_V\times T^I)\subset \prod_{i\in I} U_{V_i},
\]
where $U_{V_i}\subset\C^{2V_i}$ is a set of embeddings for all $i\in I$.
Let $\fT(\cdot)$ denote the tangent space. It is straightforward to see that if $X,Y$ are complex manifolds, then
\[
\fT(X\times Y)\cong \fT(X)\times\fT(Y).
\]
Thus, by taking the total derivatives of the maps in the diagram at~\EQN{dia},
we obtain the following diagram of linear maps between tangent spaces,
where
\[
g:=\cS_\Omega|_{U_V\times T^I},\quad
f:=\cE_G|_{U_V},\quad
f_i:=\cE_{G_i}|_{U_{V_i}},\quad
F:=\prod_{i\in I}f_i,\quad
p_i:=\bt_i\circ r|_{V_i}:
\]
\[
\begin{tikzcd}
\fT_r(U_V)\times\prod_{i\in I}\fT_{\bt_i}(T)\arrow[d, "d\pi"]\arrow[rr, "dg"] & & \prod_{i\in I}\fT_{p_i}(U_{V_i})\arrow[d,"dF=\prod_{i\in I} df_i"]\\
\fT_r(U_V)\arrow[r,"df"] & \arrow[r,"="]\fT_{f(r)}(\C^E) & \prod_{i\in I}\fT_{f_i(p_i)}(\C^{E_i})
\end{tikzcd}
\]
It follows from \Cref{lem:dia} that the above diagram commutes.
Notice that the total derivatives~$df$ and $dF$ are surjective
by the definitions of sets of embeddings and $G$ being sparse.
We would like to show that $dg$ is surjective as well.
Suppose that $\gamma\in \prod_{i\in I}\fT_{p_i}(U_{V_i})$.
There exists $\alpha\in \fT_r(U_V)$ \st $df(\alpha)=dF(\gamma)$.
Hence, $dg(\alpha,\zv)-\gamma$ is in the kernel of $dF$,
where $\zv$ corresponds to a tuple of~$|I|$ zero vectors.

{\bf Claim 1.}
{\it If $h\c U_{V_i}\times T\to U_{V_i}$ sends $(r,t)$ to $t\circ r$, then for all $i\in I$,}
\[
dh(\{0\}\times \fT_{\bt_i}(T))=\ker df_i.
\]
Notice that $p_i\in U_{V_i}\subset \C^{2V_i}$ is a realization.
An element in the tangent space~$\fT_{p_i}(U_{V_i})$ assigns to each of the points in~$P:=\{(p_i)_v\}_{v\in V_i}\subset \C^2$
a vector in~$\C^2$.
If an element in~$\fT_{p_i}(U_{V_i})$ assigns to each point in~$P$ the velocity vector of some transformation~$t\in T$,
then it must be in the kernel of~$df_i$ (see \citep[\textsection18.1]{2019hb}).
Since $\dim T=3$, we find that $\dim \ker df_i\geq 3$.
The kernel of $df_i$ corresponds to the kernel of Jacobian matrix of $\cE_{G_i}$ at~$p_i$
and since $p_i$ is contained in the set of embeddings~$U_{V_i}$,
this Jacobian matrix has rank $2|V_i|-3$.
It follows that each element in~$\ker df_i\subset \fT_{p_i}(U_{V_i})$ comes from
transformation in $T$,
which is equivalent to the statement of Claim~1.
For more details we refer to \citep[\textsection1.3.2 and Theorem~18.4]{2019hb},
where it is explained that the kernel of the ``rigidity matrix'' (corresponding to $\ker df_i$)
of an ``infinitesimally rigid framework'' (corresponding to $(G,p_i)$),
does not contain ``infinitesimal motions''.

It follows from Claim~1 that
\[
dg\Bigl(\{0\}\times\prod_{i\in I}\fT_{\bt_i}(T)\Bigr)=\ker dF.
\]
Hence, there exists $\beta\in\prod_{i\in I}\fT_{\bt_i}(T)$ \st $dg(0,\beta)=dg(\alpha,\zv)-\gamma$,
which implies that $\gamma=dg(\alpha,\beta)$.
We conclude that $g$ is a submersion as was to be shown.

\ref{lem:U:c}
If $G'$ is a triangle with vertices $V'=\{u,v,w\}\subset V$
and $s\in \C^{2V}$ a realization \st $s_u,s_v,s_w\in\C^2$ are collinear,
then the Jacobian matrix of the edge map~$\cE_{G'}$
has rank strictly less than $2|V'|-3$ at $s|_{V'}\in \C^{2V'}$ and thus $s\notin U_V$
(see \citep[18.1]{2019hb}).
\end{proof}

\begin{remark}
Alternatively, we could define the set of embeddings $U_V$ as
the set of realizations in $\C^{2V}$ that satisfy both \RLS{U}{b}{U}{c}.
A similar proof as for \RL{U}{a} would show that $U_V \subset \C^{2V}$
is Zariski open.
\END
\end{remark}

We call an algebraic variety \df{irreducible} if it is not the union
of two Zariski closed sets.
We remark that a Zariski open set of a variety is again a variety.

\begin{lemma}[fiber product]
\label{lem:XxX}
Suppose that $f\c X\to Y$ is a dominant morphism between
smooth irreducible varieties and let
\[
X\times_f X:=\set{(p,q)\in X^2}{f(p)=f(q)}.
\]
\begin{claims}
\item\label{lem:XxX:a}
If $f$ is a submersion, then the fiber product $X\times_f X$ is a complex manifold and
\[
\dim X\times_f X=2\dim X-\dim Y.
\]
\item\label{lem:XxX:b}
If $X\times_f X$ is irreducible, then general fiber of $f$ is irreducible.
\end{claims}
\end{lemma}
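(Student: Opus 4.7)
For part (a), the plan is to realize the fiber product as a transverse preimage and apply the implicit function theorem. I write $X \times_f X = (f \times f)^{-1}(\Delta_Y)$, where $\Delta_Y \subset Y \times Y$ is the diagonal. Since $f$ is a submersion, so is $f \times f$, and $\Delta_Y$ is a smooth complex submanifold of codimension $\dim Y$. The preimage theorem for submersions then yields that $X \times_f X$ is a smooth complex submanifold of $X \times X$ of codimension $\dim Y$, so $\dim X \times_f X = 2 \dim X - \dim Y$. This step should be routine.

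For part (b), I would argue the contrapositive: assuming $k := c(f) \geq 2$, I will show that $W := X \times_f X$ is reducible. Let $V \subset Y$ be the Zariski-open locus over which $f$ has smooth fibers with exactly $k$ connected components and set $X' := f^{-1}(V)$, $W' := W \cap (X' \times X')$. Because $f|_{X'}$ is a submersion, the number of connected components of a fiber is locally constant, and the assignment sending $p \in X'$ to the connected component of $f^{-1}(f(p))$ containing $p$ organizes into a continuous map $\eta\c X' \to Y'$, where $Y' \to V$ is the resulting $k$-sheeted topological covering of $V$. I then partition
\[
W' = W'_{\mathrm c} \sqcup W'_{\mathrm d}, \qquad W'_{\mathrm c} := \{(p,q) \in W' : \eta(p) = \eta(q)\}.
\]
Since the diagonal of $Y'$ is open in $Y' \times_V Y'$, both pieces are open in $W'$; the image of the diagonal $X' \hookrightarrow W'$ lies in $W'_{\mathrm c}$, and $k \geq 2$ guarantees a point $(p,q) \in W'_{\mathrm d}$ with $p$ and $q$ in different components of a common fiber. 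Hence $W'$ is disconnected in the classical topology.

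To reach the contradiction, I would then show that $W'$ must in fact be connected under the irreducibility hypothesis. The complement $W \setminus W' = \pi_1^{-1}(f^{-1}(Y \setminus V))$ has complex codimension at least one in $W$, since $Y \setminus V$ has complex codimension at least one in $Y$ and both $f$ and the first projection $\pi_1\c W \to X$ are submersions (for $\pi_1$, the tangent space $\fT_{(p,q)}(W) = \{(u,v) : df(u) = df(v)\}$ projects onto $\fT_p(X)$). Therefore $W \setminus W'$ has real codimension at least two in $W$, and removing such a subset does not disconnect a connected complex manifold. If $W$ were irreducible then by part (a) it would be smooth and classically connected, so $W'$ would be connected, contradicting the previous paragraph. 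The main technical subtlety is the construction of the covering $Y' \to V$ and the continuity of $\eta$, which rests on local triviality of submersions (via the implicit function theorem) and local constancy of the fiber-component count; once this is in place, openness of $W'_{\mathrm c}$ and $W'_{\mathrm d}$ is automatic from the \'etale property of $Y' \to V$.
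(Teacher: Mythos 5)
Part (a) is essentially the paper's proof (the paper cites Hartshorne III.10.1(d) for what you establish directly as the transverse preimage $(f\times f)^{-1}(\Delta_Y)$). For part (b) you take a genuinely different route. The paper is entirely algebraic: irreducibility of $X\times_f X$ is read off as $\C[X]\otimes_{\C[Y]}\C[X]$ being a domain, localization gives the same for $\C(X)\otimes_{\C(Y)}\C(X)$, from which one deduces that $f^\star(\C(Y))$ is algebraically closed in $\C(X)$, and then that $\C[X]\otimes_{\C[Y]}\C(Y)$ is a domain, i.e.\ the general fiber is irreducible. You instead argue topologically by contraposition, assembling fiber components into a $k$-sheeted covering $Y'\to V$ and splitting $W'=W'_{\mathrm{c}}\sqcup W'_{\mathrm{d}}$ into two nonempty clopen pieces. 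The idea is sound, but the step you flag as the ``main technical subtlety'' is precisely the crux, and local triviality of a non-proper submersion is not by itself enough to produce a Zariski-open $V$ and the covering $Y'\to V$; what you are reaching for is Stein factorization (compactify, take the integral closure of $\C[Y]$ in $\C[X]$, then restrict), and the paper's step asserting $f^\star(\C(Y))$ algebraically closed in $\C(X)$ is exactly the function-field incarnation of that covering being trivial at the generic point, so the algebraic route sidesteps all of the analytic bookkeeping. Two smaller remarks: the appeal to real codimension $\geq 2$ is an overshoot --- $W'$ is a nonempty Zariski-open subset of the irreducible $W$, hence itself irreducible and therefore classically connected, with no codimension estimate and no smoothness needed; and, like the paper (which reduces WLOG to a smooth morphism via Hartshorne III.10.4), you implicitly invoke the submersion hypothesis in part (b) when you apply (a) to get smoothness of $W$ --- harmless after shrinking $X$, but it should be said.
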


\begin{proof}
See \APP{fiber}.
\end{proof}

If $G=(V,E)$ is a graph
with tight decomposition~$\Omega:=\{(V_i,E_i)\}_{i\in I}$,
then its \df{fiber square}
is defined as
\[
P(\Omega):=\set{(r,\bt;s,\bu)\in \left(U_V\times T^I\right)^2}{\cS_\Omega(r,\bt)=\cS_\Omega(s,\bu)}.
\]
If $G$ is sparse and $\Omega$ a max-tight decomposition, then we write $P(G)$ instead of~$P(\Omega)$.

We remark that our definition of ``fiber square'' deviates from the literature, where this notion
instead refers to the diagram associated to a fiber product.

\begin{lemma}
\label{lem:cS}
If $G$ is a sparse graph \st its fiber square~$P(G)$ is irreducible, then
the fiber component number of the edge map $\cS_G$ is equal to
$c(\cS_G)=1$.
\end{lemma}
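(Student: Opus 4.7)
The plan is to apply \RL{XxX}{b} to the restriction of the subgraph map to the set of embeddings, and then transfer the resulting irreducibility of a general fiber back to $\cS_G$ on its full domain.

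First, I set $X:=U_V\times T^I$ and $f:=\cS_G|_X$. By \RL{U}{a}, the set $U_V$ is Zariski open in $\C^{2V}$, and $T$ is the smooth irreducible affine quadric cut out by $c^2+s^2=1$ in $\C^4$, so $X$ is a Zariski open subset of the smooth irreducible variety $\C^{2V}\times T^I$. Let $\bar Y$ denote the Zariski closure of $\cS_G(X)$ in $\prod_{i\in I}\C^{2V_i}$; it is irreducible because $X$ is. After replacing $\bar Y$ by a Zariski open dense subset~$Y$ that lies in $\cS_G(X)$ and in the smooth locus of $\bar Y$, and replacing $X$ by $f^{-1}(Y)$ accordingly, I obtain a dominant morphism $f\c X\to Y$ between Zariski open subsets of smooth irreducible varieties, precisely as required by \Cref{lem:XxX}.

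Second, directly from the definition of the fiber square,
\[
P(G)=\set{(p,q)\in X\times X}{f(p)=f(q)}=X\times_f X.
\]
Since $P(G)$ is assumed irreducible, \RL{XxX}{b} implies that the general fiber of $f$ is irreducible, so $c(f)=1$.

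Finally, I pass from $f$ to $\cS_G$. Because $X$ is Zariski open and dense in $\C^{2V}\times T^I$ and $\cS_G$ is dominant onto $\bar Y$, a dimension count rules out any irreducible component of the general fiber of $\cS_G$ being entirely contained in the closed complement of $X$: such a component would have the generic fiber dimension but lie in a proper closed subset of $\C^{2V}\times T^I$, forcing its image to lie in a proper closed subset of $\bar Y$ and contradicting generality of the base point. Therefore the irreducible components of $\cS_G^{-1}(y)$ and of $f^{-1}(y)$ are in bijection for general $y$, and $c(\cS_G)=c(f)=1$, as claimed.

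The main obstacle I anticipate is purely in the bookkeeping of the first and last steps: coercing the restriction $f$ into the precise shape required by \Cref{lem:XxX}, and justifying that no irreducible component of a general fiber of $\cS_G$ escapes into the complement of the set of embeddings. Once those setup issues are dispatched, the conclusion is an immediate consequence of the fiber-square lemma.
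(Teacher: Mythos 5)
Your proof is correct and follows the same route as the paper: restrict $\cS_G$ to $X:=U_V\times T^I$ (Zariski open in a smooth irreducible variety by \RL{U}{a}), observe that $P(G)$ is precisely the fiber product $X\times_f X$, apply \RL{XxX}{b}, and transfer irreducibility of the general fiber of the restriction back to $\cS_G$ on its full domain. The paper's own proof is just a compressed version of these steps. One remark: the step where you shrink the codomain to a dense open of $\overline{\cS_G(X)}$ is superfluous and, in fact, slightly obscures a point you rely on anyway. The subgraph map is dominant onto $\prod_{i\in I}\C^{2V_i}$ (this is asserted in the proof of \Cref{lem:c}), so the codomain is already the smooth irreducible variety required by \Cref{lem:XxX} and no replacement is needed. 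More importantly, your final equality $c(\cS_G)=c(f)$ silently uses this dominance: the fiber component number $c(\cS_G)$ is defined with respect to general points of $\prod_{i\in I}\C^{2V_i}$, so if $\overline{\cS_G(X)}$ were a proper closed subvariety your argument would produce $c(f)=1$ while $c(\cS_G)=0$. You should cite or verify dominance rather than route around it.
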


\begin{proof}
Recall from \RL{U}{a} that $U_V\subset\C^{2V}$ is a Zariski open set.
It now follows from \RL{XxX}{b} that the general fiber of $\cS_G|_{U_V\times T^I}$
is irreducible. Hence, the general fiber
of $\cS_G$ is irreducible as well.
\end{proof}

\begin{proposition}
\label{prp:main}
If $G=(V,E)$ is a sparse graph
with max-tight decomposition $\{G_i\}_{i\in I}$
and $P(G)$ is irreducible,
then for general edge length assignment $\lambda\in\C^E$,
\[
c(G,\lambda)=\prod_{i\in I} c(G_i).
\]
\end{proposition}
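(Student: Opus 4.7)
The proof should be essentially an assembly of the preceding lemmas, since the hard machinery (the fiber-product analysis, equivariance of $\cS_G$, and the submersion property on $U_V\times T^I$) has already been built up. The plan is to express $c(G,\lambda)$ as the fiber component number of the edge map, factor it through the subgraph map, and then cash in the irreducibility hypothesis on $P(G)$.

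Concretely, first I would invoke \RL{em}{b}: since $G$ is sparse, for a general edge length assignment $\lambda \in \C^E$ we have $c(G,\lambda) = c(\cE_G)$. Next, applying \Cref{lem:c} to the max-tight decomposition $\{G_i\}_{i\in I}$ gives the factorization
\[
c(\cE_G) \;=\; c(\cS_G)\cdot \prod_{i\in I} c(\cE_{G_i}).
\]
Each $G_i$ is by definition tight (hence minimally rigid by \Cref{thm:laman}), so a second application of \RL{em}{b} yields $c(\cE_{G_i})=c(G_i)$ for every $i\in I$.

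It remains to eliminate the factor $c(\cS_G)$. Here the hypothesis on $P(G)$ enters: by \Cref{lem:cS}, irreducibility of the fiber square $P(G)$ implies $c(\cS_G)=1$. Substituting back into the factorization above gives
\[
c(G,\lambda) \;=\; c(\cS_G)\cdot\prod_{i\in I} c(\cE_{G_i}) \;=\; 1\cdot\prod_{i\in I} c(G_i),
\]
which is the desired identity.

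There is no real obstacle at this stage; all genuine content lies upstream, particularly in \Cref{lem:c} (where the $T^I$-equivariance and \Cref{lem:fc} are used to avoid the general failure of multiplicativity of fiber component numbers under composition) and in \Cref{lem:cS} (which relies on \RL{XxX}{b} to pass from irreducibility of the fiber square to irreducibility of a general fiber of $\cS_G$). The only small care needed in writing the proof is to note that the word ``general'' is used consistently: the $\lambda$ for which $c(G,\lambda)=c(\cE_G)$ lies in a Zariski open subset of $\C^E$, and the same holds for each $\cE_{G_i}$ via the isomorphism $\mu$ in \EQN{dia}; intersecting finitely many such open sets keeps $\lambda$ general.
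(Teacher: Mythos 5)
Your proof is correct and follows exactly the same route as the paper's: apply \RL{em}{b} to identify $c(G,\lambda)$ with $c(\cE_G)$, factor via \Cref{lem:c}, use \RL{em}{b} again for the tight pieces, and then invoke \Cref{lem:cS} to set $c(\cS_G)=1$ from the irreducibility of $P(G)$. The extra remark about intersecting finitely many Zariski open sets is a sensible bookkeeping note but is not needed beyond what the paper already implicitly handles.
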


\begin{proof}
We know from \RL{em}{b} and \Cref{lem:c} that
\[
c(G,\lambda)=c(\cE_G)=c(\cS_G)\cdot \prod_{i\in I} c(\cE_{G_i})=c(\cS_G)\cdot\prod_{i\in I} c(G_i).
\]
If $P(G)$ is irreducible, then $c(\cS_G)=1$ by \Cref{lem:cS}.
\end{proof}

\section{Associated fiber squares are irreducible}
\label{sec:red}

In this section, we prove by induction that the
fiber square~$P(G)$ associated to a sparse graph~$G$ is irreducible.
For the induction step,
we construct a new sparse graph~$G^\m$ that has one vertex less than $G$
and show that if $P(G^\m)$ is irreducible, then $P(G)$ must be irreducible as well.
To prove the induction step, we construct a ``comparison map'' $P(G)\to P(G^\m)$
and analyze the irreducibility and dimension of each of its fibers.
For the induction basis, we observe that $P(G)$ is irreducible if $G$ has one vertex.
We conclude this section with a proof for \Cref{thm:main,cor:main}
by applying \Cref{prp:main} from the previous section.

Suppose that $G=(V,E)$ is a sparse graph
and let $G'$ be the graph that is obtained from~$G$ by removing the vertex $v\in V$.
For an initial understanding,
let us assume that each max-tight subgraph of the sparse graph~$G'$ is
either
\begin{Mlist}
\item a max-tight subgraph of~$G$, or a max-tight subgraph of $G$ minus the vertex~$v$.
\end{Mlist}
In this case, we obtain a canonical map from
the fiber square of~$G$ to the fiber square of~$G'$ by restricting
the realization components to the vertex set~$V':=V\setminus\{v\}$.
In other words, the comparison map sends $(r,\bt;s,\bu)$ to $(r|_{V'},\bt;s|_{V'},\bu)$.
In the formal definition of the comparison map below,
we will also include the case that the above assumption does not hold.
In the more general setting, we may need to add an additional edge to~$G'$.

Let $G=(V,E)$ and $G'=(V',E')$ be graphs.
We denote by $E[v]$ the set of edges in $E$ that contain the vertex~$v\in V$.
The degree of a vertex~$v\in V$ is defined as the cardinality of~$E[v]$.
If $v\notin V$, then $E[v]:=\varnothing$.

We call $G$ a \df{0-extension} of $G'$ with respect to vertex~$v_0$ if
\[
(V,E)=\bigl(V'\cup\{v_0\},E'\cup\set{\{v_0,v_i\}}{i\in\{1,2\}}),
\]
where $v_0\notin V'$ and $v_1,v_2\in V'$ are distinct vertices (see \Cref{fig:H2} left).

We call $G$ a \df{1-extension} of $G'$ with respect to vertex~$v_0$ and edge~$\{v_2,v_3\}$
if
\[
(V,E)=\bigl(V'\cup\{v_0\},E'\cup\set{\{v_0,v_i\}}{i\in\{1,2,3\}}\setminus\{\{v_2,v_3\}\}\bigr),
\]
where $v_0\notin V'$, $v_1,v_2,v_3\in V'$ are pairwise distinct vertices and $\{v_2,v_3\}\in E'$ is an edge (see \Cref{fig:H2} right).

We remark that 0- and 1-extensions are also known as ``Henneberg moves'' \cite{2019hb}.
\begin{figure}[!ht]
\centering
\begin{tabular}{cc@{\hspace{15mm}}cc}
\begin{tikzpicture}[scale=0.7]
\draw [dashed] (0,0) ellipse (2cm and 1cm);
\node[vertex] (v) at (-1,0) [label=below:$v_1$] {};
\node[vertex] (w) at (1,0) [label=below:$v_2$] {};
\node at (0,-2) {$G'$};
\end{tikzpicture}
&
\begin{tikzpicture}[scale=0.7]
\draw [dashed] (0,0) ellipse (2cm and 1cm);
\node[vertex] (v) at (-1,0) [label=below:$v_1$] {};
\node[vertex] (w) at (1,0) [label=below:$v_2$] {};
\node[vertex] (k) at (0,2) [label=above:$v_0$] {};
\node at (0,-2) {$G$};
\path (v) edge[edge] (k) (w) edge[edge] (k);
\end{tikzpicture}
&
\begin{tikzpicture}[scale=0.7]
\draw [dashed] (0,0) ellipse (2cm and 1.5cm);
\node[vertex] (v) at (0,1) [label=left:$v_3$] {};
\node[vertex] at (-1,-0.1) [label=below:$v_1$] {};
\node[vertex] (w) at (1,-0.1) [label=below:$v_2$] {};
\node at (0,-2) {$G'$};
\path (v) edge[edge] (w);
\end{tikzpicture}
&
\begin{tikzpicture}[scale=0.7]
\draw [dashed] (0,0) ellipse (2cm and 1.5cm);
\node[vertex] (i) at (-1,-0.1) [label=below:$v_1$] {};
\node[vertex] (j) at (1,-0.1) [label=below:$v_2$] {};
\node[vertex] (k) at (0,1) [label=left:$v_3$] {};
\node[vertex] (w) at (2,2) [label=above:$v_0$] {};
\path (i) edge[edge] (w)
      (j) edge[edge] (w)
      (k) edge[edge] (w);
\node at (0,-2) {$G$};
\end{tikzpicture}
\end{tabular}
\caption{The left/right graph $G$ is a 0/1-extension of the graph~$G'$.}
\label{fig:H2}
\end{figure}
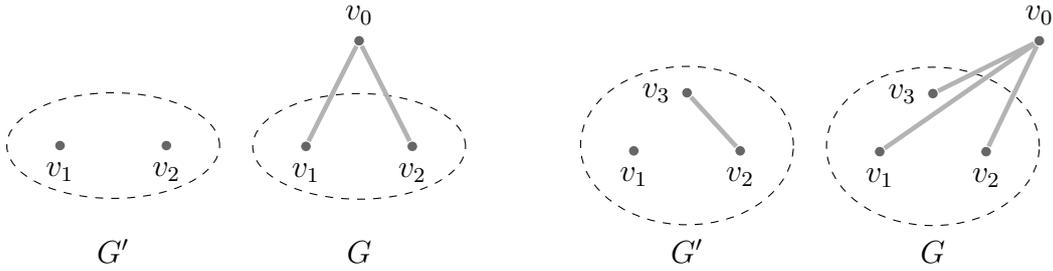

Suppose that $G=(V,E)$ be a sparse graph with max-tight decomposition
\[
\{G_i\}_{i\in I}=\{(V_i,E_i)\}_{i\in I}.
\]
A \df{marking} for $G$ is a defined as a function $\m\in V^Z$ \st
\begin{Mlist}
\item
$\m\c Z\to V$ is injective, $Z\in\{\{0\},\{0,1\},\{0,1,2\},\{0,1,2,3\}\}$
and $|V|>1$,
\item
$E[\m_0]=\set{\{\m_0,\m_z\}}{z\in Z\setminus\{0\}}$, and
\item
if $|Z|=4$ and $\m(Z)\subset V_i$ for some $i\in I$,
then $G_i$ is a 1-extension of a tight graph \wrt $\m_0$ and edge~$\{\m_2,\m_3\}$.
\end{Mlist}
Suppose that $\m\in V^Z$ is a marking for $G$.
For all $i\in I$, we define
\begin{align*}
V^\m_i&:=V_i\setminus\m_0,
\\
E^\m_i&:=
\begin{cases}
E_i\setminus E_i[\m_0]\cup\{\{\m_2,\m_3\}\} & \text{if $|Z|=4$ and $\m(Z)\subset V_i$ for some $i\in I$, or}\\
E_i\setminus E_i[\m_0] & \text{otherwise}.
\end{cases}
\end{align*}
We set
\[
I^\m:=\set{i\in I}{V_i\neq\{\m_0\} \text{ and } |E_i[\m_0]|\neq1}.
\]
For all $i\in I^\m$, we define
\[
G^\m_i:=(V^\m_i,E^\m_i).
\]
The \df{reduced graph} associated to the marking~$\m$ is defined as
\[
G^\m=(V^\m,E^\m):=\bigcup_{i\in I^\m} G^\m_i.
\]
If $G^\m$ is sparse with max-tight decomposition~$\{G^\m_i\}_{i\in I}$, then
the \df{comparison map} associated to the marking~$\m$
is (if it exists) defined as
\[
\CM\c P(G)\to P(G^\m),\quad
(r,\bt;s,\bu)
\mapsto
(r|_{V^\m},\bt|_{I^\m};s|_{V^\m},\bu|_{I^\m}).
\]

\begin{lemma}
\label{lem:cm}
If $G=(V,E)$ is a sparse graph \st $|V|>1$, then
there exists a marking~$\m\in V^Z$ \st
\begin{Mlist}
\item its reduced graph~$G^\m$ is sparse with max-tight decomposition~$\{G^\m_i\}_{i\in I^\m}$, and
\item its comparison map $\CM\c P(G)\to P(G^\m)$ exists.
\end{Mlist}
\end{lemma}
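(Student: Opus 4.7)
The plan is to build $\m$ by picking a low-degree vertex as $\m_0$ and treating the degree cases separately; the only non-trivial case is $\deg(\m_0)=3$. Since $G$ is sparse, $\sum_{v\in V}\deg(v)=2|E|\leq 4|V|-6<4|V|$, so some vertex $\m_0\in V$ has $\deg(\m_0)\leq 3$. Fix such a vertex, set $|Z|:=\deg(\m_0)+1$ and let $\m_1,\ldots,\m_{|Z|-1}$ enumerate the neighbors of $\m_0$; then injectivity of $\m$ and the condition on $E[\m_0]$ are automatic. For $|Z|\leq 3$ the third marking condition is vacuous. For $|Z|=4$, either the three neighbors together with $\m_0$ are not contained in any single $V_i$ (the condition is again vacuous), or they all lie in some $V_i$ and $\m_0$ has degree three in the tight graph $G_i$. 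In the latter case I invoke the classical reverse 1-extension theorem for tight graphs, which produces two neighbors $u,w$ of $\m_0$ in $G_i$ with $\{u,w\}\notin E_i$ such that $G_i$ is a 1-extension of a tight graph \wrt $\m_0$ and $\{u,w\}$; relabel so that $\m_2:=u$ and $\m_3:=w$.

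Next, verify that $G^\m$ is sparse with max-tight decomposition $\{G^\m_i\}_{i\in I^\m}$. Each $G^\m_i$ for $i\in I^\m$ is tight: if $|E_i[\m_0]|=0$ then $G^\m_i=G_i$; if $|E_i[\m_0]|=2$ then $G^\m_i=G_i-\m_0$ has edge count $2(|V_i|-1)-3$ and is sparse as a subgraph of the tight $G_i$; and if $|E_i[\m_0]|=3$, we are in the 1-extension case and $G^\m_i$ is tight by construction. The edge sets $E^\m_i$ remain pairwise disjoint, since the new edge $\{\m_2,\m_3\}$ (when added) cannot already lie in any $E_j$: otherwise $(V_i,E_i\cup\{\{\m_2,\m_3\}\})$ would be a subgraph of $G$ with $|E_i|+1=2|V_i|-2$ edges on $|V_i|$ vertices, violating sparsity of $G$. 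Combined with the standard matroidal fact that pairwise vertex intersections of max-tight pieces are at most singletons, this yields sparsity of $G^\m$ as a whole. Max-tightness of each $G^\m_i$ in $G^\m$ is a contradiction argument: a strictly larger tight subgraph $H\supsetneq G^\m_i$ of $G^\m$ would, after reintroducing $\m_0$ with its incident edges (and replacing the added edge $\{\m_2,\m_3\}$ by the path $\m_2-\m_0-\m_3$ when necessary), lift to a strictly larger tight subgraph of $G$ containing $G_i$, contradicting max-tightness of $G_i$ in $G$.

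With the max-tight decomposition established, the comparison map $\CM\c P(G)\to P(G^\m)$ is well-defined by restricting realizations to $V^\m$ and transformation tuples to $I^\m$, and it lands in $P(G^\m)$ because the restricted subgraph maps commute. The main obstacle is the degree-three subcase in which all neighbors lie in a single max-tight: this requires both the reverse Henneberg 1-extension theorem (to produce a valid pair $\m_2,\m_3$ and guarantee tightness of the modified $G^\m_i$) and a careful lift-and-compare bookkeeping that rules out any unexpected merging of the pieces $G^\m_i$ into a larger tight subgraph of $G^\m$.
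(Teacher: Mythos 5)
Your construction of the marking is correct but takes a genuinely different route from the paper. The paper embeds the sparse $G$ into a tight supergraph $\tilde G$ on the same vertex set, invokes the Henneberg theorem to exhibit $\tilde G$ as built from an edge by 0- and 1-extensions, and takes $\m_0$ to be the last-added vertex; the resulting marking for $\tilde G$ is then claimed to descend to $G$ ``as a direct consequence of the definitions''. You instead pick any vertex of degree at most $3$ (which exists by the handshake bound $2|E|\leq 4|V|-6$) and, in the degree-three case where $\m(Z)$ lies in a single max-tight piece $G_i$, apply the reverse 1-extension theorem directly to $G_i$. The two are essentially the same Henneberg-type fact read forwards versus backwards, but your version is arguably cleaner: the paper's descent from $\tilde G$ to $G$ silently requires restricting $Z$ when $\deg_G(\m_0)<\deg_{\tilde G}(\m_0)$ and then re-verifying condition (iii) for the max-tight decomposition of $G$ rather than $\tilde G$, whereas you work with $G$'s decomposition from the start. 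On the verification side, both your argument and the paper's establish tightness of each $G^\m_i$ by the same case split on $|E_i[\m_0]|\in\{0,2,3\}$, and both leave max-tightness of $G^\m_i$ inside $G^\m$ (and sparsity of $G^\m$) at a comparable level of informality: the paper writes ``it follows that $G^\m_i$ is a max-tight subgraph of $G^\m$'' in each case without further detail, and you acknowledge that your lift-and-compare step (reinserting $\m_0$, trading the new edge $\{\m_2,\m_3\}$ for the path through $\m_0$, and deriving a tight supergraph of $G_i$ in $G$) ``requires care'' without carrying it out; the matroidal fact you invoke, that distinct max-tight pieces of a sparse graph share at most one vertex, is indeed standard and does support the sparsity of $G^\m$. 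So the proposal is correct and its gaps mirror the paper's own brevity; the main genuine difference is the direct reverse-1-extension construction of the marking.
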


\begin{proof}
A 0- or 1-extension of a tight graph is again tight.
Moreover, any tight graph can be obtained from an edge by applying subsequently 0- and/or 1-extensions
(see \cite{1911,1970} or alternatively \citep[Theorem~19.2]{2019hb}).
Hence, for all tight graphs there exists a marking.
The sparse graph~$G$ is a subgraph of a tight graph
and a marking~$\m$ for this tight graph is also a marking for~$G$ as a direct consequence of the definitions.

Suppose that $i\in I^\m$ so that $V_i\neq\{\m_0\}$ and $|E_i[\m_0]|\neq 1$ by definition of~$I^\m$.
We make a case distinction on $|E_i[\m_0]|$.

First, suppose that $|E_i[\m_0]|=0$. In this case, $\m_0\notin V_i$,
so that $G^\m_i=G_i$ is a max-tight subgraph of $G^\m$.

Next, suppose that $|E_i[\m_0]|=2$.
In this case $G_i$ is a 0-extension of $G^\m_i$.
Since $|E_i|-2=2(|V_i|-1)-3$, we have $|E^\m_i|=2|V^\m_i|-3$ so that $G^\m_i$ is tight.
It follows that $G^\m_i$ is a max-tight subgraph of $G^\m$.

Finally, suppose that $|E_i[\m_0]|=3$.
By the definition of markings,
we find that $G_i$ is a 1-extension of the tight graph $G_i^\m$ \wrt $\m_0$ and the edge~$\{\m_2,\m_3\}$.
It follows that $G^\m_i$ is a max-tight subgraph of $G^\m$.

We have established that $\{G^\m_i\}_{i\in I^\m}$ is a max-tight decomposition for $G^\m$
so that $G^\m$ is sparse and the fiber square $P(G^\m)$ is well-defined.
By the definition of the set of embeddings~$U_V$ and since $V^\m\subset V$,
we find that $\set{r|_{V^\m}}{r\in U_V}\subset U_{V^\m}$.
Hence, the comparison map~$\CM$ is also well-defined.
\end{proof}

Suppose that $G=(V,E)$ is a sparse graph with marking~$\m\in V^Z$.
In order to show that $P(G)$ is irreducible,
we want to show that the hypothesis of \Cref{lem:if} below
holds if $f$ is equal to the comparison map $\CM$.

\begin{lemma}
\label{lem:if}
Suppose that $f\c X\to Y$ is a morphism \st $X$ is equidimensional and $Y$ is irreducible.
If for general $y\in Y$,
\begin{Mlist}
\item
$\dim f^{-1}(y)=\dim X-\dim Y$,
\item
$f^{-1}(y)$ is irreducible, and
\item
$
\dim\set{z\in Y}{\dim f^{-1}(z)=\dim X-\dim Y+c}
<
\dim Y-c
$
for all $c\in\Z_{>0}$,
\end{Mlist}
then $X$ is irreducible.
\end{lemma}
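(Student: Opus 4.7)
The plan is to decompose $X$ into its irreducible components and argue that only one can survive, handling non-dominant components via hypothesis~(iii) and multiple dominant components via the irreducibility of the general fiber from hypothesis~(ii). Let $e:=\dim X-\dim Y$ and write $X=X_1\cup\cdots\cup X_k$ as the union of irreducible components, each of dimension~$\dim X$ by equidimensionality. Hypothesis~(i) forces $e\geq 0$ and makes $f$ dominant.

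For the first step, suppose some $X_j$ fails to dominate $Y$, so $Y_j:=\overline{f(X_j)}$ is a proper irreducible closed subset of $Y$; write $c:=\dim Y-\dim Y_j\geq 1$. Applying the fiber dimension theorem to the dominant restriction $f|_{X_j}\c X_j\to Y_j$ shows its general fiber has dimension $\dim X-\dim Y_j=e+c$, so a dense open subset of $Y_j$ lies in $A_c:=\set{z\in Y}{\dim f^{-1}(z)\geq e+c}$, yielding $\dim A_c\geq \dim Y-c$. On the other hand, $A_c$ is the finite union of the level sets $\set{z\in Y}{\dim f^{-1}(z)=e+c'}$ for $c'\geq c$ (only finitely many are non-empty since fiber dimensions are bounded by $\dim X$), each of dimension strictly less than $\dim Y-c'\leq \dim Y-c$ by hypothesis~(iii), so $\dim A_c<\dim Y-c$. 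This contradiction rules out non-dominant components.

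For the second step, suppose $X_1\neq X_2$ are two distinct (necessarily dominant) components. I would choose a dense open $V\subset Y$ over which simultaneously $f^{-1}(y)$ is irreducible of dimension~$e$ (using hypotheses~(i) and~(ii)) and each restricted fiber $(f|_{X_i})^{-1}(y)$ has dimension exactly~$e$ for $i=1,2$ (using the fiber dimension theorem applied to each dominant map~$f|_{X_i}$). For $y\in V$ and $i\in\{1,2\}$, the subset $f^{-1}(y)\cap X_i$ is closed in the irreducible variety $f^{-1}(y)$ and has its full dimension $e$, so it must equal $f^{-1}(y)$. Consequently $f^{-1}(V)\subset X_1\cap X_2$. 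But $f^{-1}(V)\cap X_1$ is a non-empty Zariski open subset of the irreducible variety~$X_1$, hence dense in~$X_1$, and is contained in the closed set $X_1\cap X_2$; taking closures gives $X_1\subset X_1\cap X_2\subset X_2$, contradicting $X_1\neq X_2$.

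Combining the two steps, $X$ has a unique irreducible component and is therefore irreducible. The main obstacle I anticipate is the dimension bookkeeping in the first step: hypothesis~(iii) controls only the individual level sets of the fiber-dimension function, while the argument requires a bound on the cumulative set~$A_c$. Bridging this gap hinges on the observation that fiber dimensions are bounded (so the union is finite) and that the dimension of a finite union equals the maximum of the dimensions of its pieces; once this is in hand, the rest is standard fiber-dimension and irreducibility bookkeeping.
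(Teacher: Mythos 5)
Your proof is correct and uses the same core ideas as the paper's: hypothesis~(iii) together with the fiber dimension theorem rules out non-dominant components, and the irreducibility of the general fiber rules out more than one dominant component. The paper organizes it slightly differently (it first fixes a dominant component $X_0$ and then treats an arbitrary second component $X_1$ in one stroke, extracting the non-dominance of $X_1$ from the inclusion $f^{-1}(y)\subset X_0$ and then invoking~(iii)), whereas you cleanly separate the two cases. The genuine added value of your write-up is the last paragraph: the paper writes $\dim f^{-1}(z)=\dim X-\dim Y+c$ for general $z\in f(X_1)$, silently identifying $f^{-1}(z)$ with $(f|_{X_1})^{-1}(z)$, and then quotes hypothesis~(iii) for that exact value of~$c$; strictly speaking the full fiber could be larger, so what one actually has is $\dim f^{-1}(z)\geq \dim X-\dim Y+c$, and one must observe, as you do, that the cumulative set $A_c$ is a finite union of level sets each of which~(iii) bounds. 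Your step 2 could also be shortcut a little in the style of the paper (from $f^{-1}(y)\subset X_1$ and $f^{-1}(y)\subset X_2$ for all $y$ in a dense open one can jump straight to $X_1=X_2$ by density of $f^{-1}(V)\cap X_1$ in $X_1$, which you do), so the two arguments end up essentially identical.
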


\begin{proof}
It follows from the theorem on fiber dimension
\citep[\textsection I.6.3, Theorem~7]{1994} applied to the surjective morphism~$f\c X\to f(X)$
and the assumption on the dimension of the general fiber of $f$
that $\dim f(X)=\dim Y$ and thus $f$ is dominant.
Since $X$ has finitely many irreducible components, there
exists a component~$X_0\subset X$ \st $\dim f(X_0)=\dim Y$.
Now suppose by contradiction that there exists an irreducible component $X_1\subset X$
different from $X_0$.
By assumption, the fiber~$f^{-1}(y)$ is irreducible for general $y\in f(X_0)$
and thus $f^{-1}(y)\subset X_0$.
Hence, $\dim f(X_1)=\dim Y-c$ for some $c>0$.
Notice that $\dim X=\dim X_0=\dim X_1$ as $X$ is equidimensional.
We again apply the fiber dimension theorem
\citep[\textsection I.6.3, Theorem~7]{1994}
to the restricted map~$f|_{X_1}$,
and find that for general~$z\in f(X_1)$,
\[
\dim f^{-1}(z)=\dim X-(\dim Y-c).
\]
We arrived at a contradiction as $\dim f(X_1)=\dim Y-c$ and
\[
\dim f(X_1)\leq \dim\set{z\in Y}{\dim f^{-1}(z)=\dim X-\dim Y+c}<\dim Y-c.
\]
We established that $X=X_0$ and thus $X$ is irreducible.
\end{proof}

In order to analyze the irreducibility and dimension of the fibers
of the comparison map, we first address in \Cref{lem:fib} below the following sub-problem.
Suppose that
$W\subset\{1,2,3\}$ and $p,q\in(\C^2)^W$ is a pair of realizations.
We would like to determine the irreducibility and dimension of the set of pairs of realizations
$r,s\in (\C^2)^{W\cup\{0\}}$
\st $r|_W=p$, $s|_W=q$ and
$d(r_0,r_w)=d(s_0,s_w)$ for all $w\in W$.
If $1,2\in W$, then we are also interested in the case that
additionally the triangles spanned by $\{r_0,r_1,r_2\}$ and $\{s_0,s_1,s_2\}$
are related by some transformation in~$T$ (and thus are not reflections of each other).

\begin{lemma}
\label{lem:fib}
Suppose that $W\subset\{1,2,3\}$ and let $P:=\{p_w\}_{w\in W}$ and $Q:=\{q_w\}_{w\in W}$
be finite sets of points in~$\C^2$ \st $|P|=|W|$ and if $|W|=3$, then $P$ is not collinear.
Let
\begin{align*}
\Lambda     &:=\set{\{v,w\}\subset W}{\sd(p_v,p_w)=\sd(q_v,q_w) \text{ and }v\neq w},\\
\Gamma&:=\set{(p_0,q_0)\in\C^2\times\C^2}{\sd(p_0,p_w)=\sd(q_0,q_w) \text{ for all } w\in W},\\
\Gamma'&:=\set{(p_0,q_0)\in \Gamma}{t(p_0)=q_0,~ t(p_1)=q_1 \text{ and } t(p_2)=q_2 \text{ for some }t\in T}.
\end{align*}
\begin{claims}
\item\label{lem:fib:a}
If $W=\varnothing$,
then $\Gamma$ is irreducible and $\dim \Gamma=4$.
\item\label{lem:fib:b}
If $W=\{1\}$,
then $\Gamma$ is irreducible and $\dim \Gamma=3$.
\item\label{lem:fib:c}
If $W=\{1,2\}$ and $\Lambda=\{\{1,2\}\}$,
then $\Gamma'$ is irreducible and
\[
\dim \Gamma=\dim\Gamma'=2.
\]
\item\label{lem:fib:d}
If $W=\{1,2\}$ and $\Lambda=\varnothing$, then $\Gamma$ is irreducible and $\dim \Gamma=2$.
\item\label{lem:fib:e}
If $W=\{1,2,3\}$ and $\Lambda=\{\{1,2\},\{1,3\},\{2,3\}\}$,
then $\Gamma'$ is irreducible and
\[
\dim \Gamma=\dim\Gamma'=2.
\]
\item\label{lem:fib:f}
If $W=\{1,2,3\}$ and $\Lambda=\{\{1,2\},\{2,3\}\}$,
then $\Gamma'$ is irreducible and
\[
\dim \Gamma=\dim\Gamma'=1.
\]
\item\label{lem:fib:g}
If $W=\{1,2,3\}$ and $\Lambda=\{\{1,2\}\}$,
then $\Gamma'$ is irreducible and
\[
\dim \Gamma=\dim\Gamma'=1.
\]
\item\label{lem:fib:h}
If $W=\{1,2,3\}$ and $\Lambda=\varnothing$,
then $\Gamma$ is irreducible and $\dim \Gamma=1$.
\end{claims}
\end{lemma}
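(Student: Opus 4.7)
The proof analyzes eight cases, unified by two complementary techniques. All varieties live in $\C^2\times\C^2\cong\C^4$, and $\Gamma$ is cut out by $|W|$ quadratic equations $\sd(p_0,p_w)=\sd(q_0,q_w)$. The first technique is \emph{transformation parametrization}: whenever $\{1,2\}\in\Lambda$ with $p_1\neq p_2$, the matched squared distance yields a unique $t\in T$ with $t(p_1)=q_1$ and $t(p_2)=q_2$, computed explicitly from the defining formula of $T$ by solving a $2\times 2$ linear system and verifying $c^2+s^2=1$ from the distance match. The graph $p_0\mapsto(p_0,t(p_0))$ then parametrizes $\Gamma'$ (or a suitable subset), making irreducibility manifest. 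The second technique is \emph{linearization}: subtracting the equation for $w=1$ from those for $w>1$ cancels the $|p_0|^2-|q_0|^2$ terms, producing $|W|-1$ affine-linear equations together with the single residual quadratic $\sd(p_0,p_1)=\sd(q_0,q_1)$. Thus $\Gamma$ is the intersection of a linear subspace $L\subset\C^4$ with a quadric, and irreducibility reduces to this residual quadric restricting to $L$ as a form of rank $\geq 3$.

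\textbf{Case execution.} Cases (a) and (b) are immediate: $\Gamma=\C^4$ and $\Gamma$ is an irreducible rank-$4$ quadric hypersurface, respectively. For (c) and (e), I apply the transformation parametrization: the graph of $t$ is automatically contained in $\Gamma'$ (using that $t$ preserves squared distances), and in (e) the third equation is a consequence of the two linearized equations together with the fact that $t$ is an isometry, so $\Gamma=\Gamma'\cong\C^2$ in both. For (f) and (g), after substituting $q_0=t(p_0)$, the equation involving $p_3$ becomes $\sd(p_0,p_3)=\sd(t(p_0),q_3)$; expanding and using the isometry property collapses the quadratic terms, leaving one affine-linear equation in $p_0$, which cuts $\Gamma'$ to an irreducible affine line. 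For (d) and (h), no transformation parametrization is available and I apply linearization: the linear equations cut $\C^4$ down to a subspace $L$ of dimension $3$ (resp.\ $2$), and the residual quadric intersects $L$ in the required variety.

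\textbf{Main obstacle.} The core difficulty is proving the residual quadric restricted to $L$ is irreducible (equivalently, of rank $\geq 3$) in cases (d) and (h). My approach is to write this restriction in explicit coordinates and show that a rank drop to $\leq 2$ would force precisely one of the distance equalities missing from $\Lambda$ in the given case, contradicting the hypothesis. This perspective also clarifies the behaviour in (c): the matched distance $\sd(p_1,p_2)=\sd(q_1,q_2)$ is exactly the condition under which the residual quadric degenerates and splits into two linear factors, which is why $\Gamma$ has two components (one being $\Gamma'$ parametrized by $t$, the other coming from a reflection through the line joining $q_1$ and $q_2$, which does not lie in $T$), and the lemma only asserts irreducibility of $\Gamma'$. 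A secondary subtle point is the dimension count in (e): three quadrics normally cut dimension by three, but the full matching in $\Lambda$ produces a dependency that preserves dimension $2$, which I would verify by checking the third equation is automatic on the graph of $t$.
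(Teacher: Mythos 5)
Your proposal takes a partially different route from the paper: for cases (b), (d), (h) you replace the paper's explicit birational parametrizations (via the one-dimensional subgroup $D=\{t\in T : p_1 = t(q_1)\}$ and the symmetry lines $L_{a,b}$) with a ``linearize and bound the rank of the residual quadric'' argument. That is a legitimate alternative in principle, and your observations for (a), (b), and (f)--(g) are essentially correct. However, there are two genuine problems.

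First, in cases (c) and (e) you assert $\Gamma=\Gamma'\cong\C^2$. This is false: the paper shows $\Gamma=\Gamma'\cup\Gamma_*$, where $\Gamma_*$ is the ``reflected'' graph coming from the opposite isometry, and $\Gamma_*$ is a second two-dimensional component distinct from $\Gamma'$. Indeed you contradict yourself later by noting that $\Gamma$ has two components in case (c). The lemma only asserts the irreducibility of $\Gamma'$ and the equality $\dim\Gamma=\dim\Gamma'=2$; the correct way to obtain the dimension statement is the decomposition $\Gamma=\Gamma'\cup\Gamma_*$ with both pieces of dimension $2$, not the incorrect identity $\Gamma=\Gamma'$.

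Second, the ``main obstacle'' you identify for (d) and (h) --- showing the residual quadric restricted to $L$ has rank $\geq 3$ --- is exactly where all the content lies, and your proposal only sketches a strategy (``a rank drop to $\leq 2$ would force one of the missing distance equalities'') without carrying out the computation. The paper sidesteps this entirely by exhibiting a birational map from the irreducible variety $D\times\C$ (resp.\ $D$) onto $\Gamma$, so irreducibility is manifest and no rank bound is needed; your route instead requires an explicit coordinate computation that is not provided. Also note the criterion for set-theoretic irreducibility of a quadric is rank $\neq 2$, not rank $\geq 3$: a rank-$1$ quadric (a double hyperplane) is still irreducible as a reduced variety, though this would then conflict with the degree claims elsewhere in the paper; you should address which ranks can actually occur.
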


\begin{proof}
\ref{lem:fib:a} We have $\Gamma=\C^2\times\C^2$ as a direct consequence of the definitions.

If $1\in W$, then we define
\[
D:=\set{t\in T}{p_1=t(q_1)}.
\]
We observe that $D$ is irreducible and of dimension~$1$ as its elements
are the composition of a complex translation with a complex
rotation centered around $p_1$.

\ref{lem:fib:b} The map $D\times\C^2\dto \Gamma$ that sends $(t,p_0)$ to $(p_0,t^{-1}(p_0))$
is birational.
Indeed, we can recover $(t,p_0)$ uniquely from $(p_0,q_0)\in \Gamma$ and $(p_1,q_1)$
if $\{p_0,q_0,p_1\}$ are pairwise distinct since by definition $p_0=t(q_0)$ and $p_1=t(q_1)$.
Hence, $\Gamma$ is irreducible and $\dim \Gamma=\dim D\times\C^2=3$.

For the remainder of the proof, we assume that $1,2\in W$.
Let
$t_*\c\C^2\to \C^2$ denote the reflection along the line spanned by $p_1,p_2\in\C$
and
\[
\Gamma_*:=\set{(p_0,q_0)\in \Gamma}{(t_*\circ t)(p_0)=q_0,~ t(p_1)=q_1 \text{ and } t(p_2)=q_2 \text{ for some }t\in T}.
\]
We observe that $\Gamma=\Gamma'\cup\Gamma_*$ and thus $\dim\Gamma=\dim \Gamma'$.

\ref{lem:fib:c},\ref{lem:fib:e}
As $P$ is not collinear there exists a unique transformation~$t\in T$ \st
$p_w=t(q_w)$ for all $w\in W$.
Hence, the map $\C^2\dto \Gamma'$ that sends $p_0$ to $(p_0,t^{-1}(p_0))$
is birational, which implies that $\Gamma'$ is irreducible and $\dim\Gamma'=2$.

The \df{bisector} of different $a,b\in\C^2$ is defined as
\[
L_{a,b}:=\set{m\in\C^2}{\sd(m,a)=\sd(m,b)}.
\]
\ref{lem:fib:d}
For all $t\in D$, we have $p_2\neq t(q_2)$ since $p_1=t(q_1)$
and $\{1,2\}\notin \Lambda$.
Hence, there exists a birational map $\ell_t\c \C\dto L_{p_2,t(q_2)}$
that depends algebraically on the parameter~$t\in D$.
Now suppose that $(p_0,q_0)\in\Gamma$.
There exists a unique $t\in D$ \st $p_0=t(q_0)$.
Moreover, there exists a unique $x\in \C$ \st $p_0=\ell_t(x)$ since
the map~$\ell_t$ is birational and
\[
\sd(p_0,p_2)=\sd(q_0,q_2)=\sd(t^{-1}(p_0),q_2)=\sd(p_0,t(q_2)).
\]
Hence, the following map is birational:
\[
D\times\C \dto \Gamma, \quad (t,x)\mapsto \bigl(\ell_t(x),~(t^{-1}\circ\ell_t)(x)\bigr).
\]
This implies that $\Gamma$ is irreducible and $\dim\Gamma=\dim D\times\C=2$.

\ref{lem:fib:f},\ref{lem:fib:g}
As $P$ is not collinear, we have $p_1\neq p_2$.
Since $\{1,2\}\in\Lambda$,
there exists a unique~$t\in D$ \st $p_2=t(q_2)$.
Since $p_1=t(q_1)$ and $\{1,3\}\notin \Lambda$,
we have $p_3\neq t(q_3)$ and thus $L_{p_3,t(q_3)}$ is a line.
Hence, there exists a birational map
\[
\ell\c \C\dto L_{p_3,t(q_3)}.
\]
Now suppose that $(p_0,q_0)\in\Gamma'$ and notice that $p_0=t(q_0)$.
Since $\ell$ is birational and
\[
\sd(p_0,p_3)=\sd(q_0,q_3)=\sd(t^{-1}(p_0),q_3)=\sd(p_0,t(q_3)),
\]
there exists a unique $x\in \C$ \st $p_0=\ell(x)$.
We established that the following map is birational:
\[
\C \dto \Gamma', \quad x\mapsto \bigl(\ell(x),~(t^{-1}\circ\ell)(x)\bigr).
\]
This implies that $\Gamma'$ is irreducible and
$\dim\Gamma'=\dim\Gamma=1$.

\ref{lem:fib:h}
For all $t\in D$, we have $p_2\neq t(q_2)$ and $p_3\neq t(q_3)$
since $\{1,2\},\{1,3\}\notin \Lambda$ and $p_1=t(q_1)$.
Moreover, $\sd(p_2,p_3)\neq\sd(t(q_2),t(q_3))=\sd(q_2,q_3)$ since $\{2,3\}\notin\Lambda$.
Therefore, the bisectors $L_{p_2,t(q_2)}$ and $L_{p_3,t(q_3)}$ do not coincide so that
\[
|L_{p_2,t(q_2)}\cap L_{p_3,t(q_3)}|\leq 1.
\]
There exist at most finitely many $t\in D$ such that the
lines $L_{p_2,t(q_2)}$ and $L_{p_3,t(q_3)}$
are parallel and thus the map $\ell\c D\dto \ell(D)\subset\C^2$
that sends $t$ to the unique intersection point in~$L_{p_2,t(q_2)}\cap L_{p_3,t(q_3)}$
is birational.
Now suppose that $(p_0,q_0)\in\Gamma$ is general.
There exists a unique $t\in D$ \st $p_0=t(q_0)$.
For all $w\in\{2,3\}$, we require that
\[
\sd(p_0,p_w)=\sd(q_0,q_w)=\sd(t^{-1}(p_0),q_w)=\sd(p_0,t(q_w)).
\]
This implies that $p_0\in L_{p_2,t(q_2)}\cap L_{p_3,t(q_3)}$ and thus $p_0=\ell(t)$.
We established that the following map is birational:
\[
D\dto \Gamma,\quad t\mapsto (\ell(t),(t^{-1}\circ\ell)(t)).
\]
We conclude that $\Gamma$ is irreducible and $\dim\Gamma=1$.
\end{proof}

\begin{lemma}
\label{lem:dim}
If $G=(V,E)$ is a graph with
tight decomposition
\[
\Omega:=\{(V_i,E_i)\}_{i\in I},
\]
then its fiber square~$P(\Omega)$ is a complex manifold of dimension
\[
\dim P(\Omega)=4|V|+6|I|-2\sum_{i\in I}|V_i|.
\]
\end{lemma}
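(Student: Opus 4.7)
The plan is to present $P(\Omega)$ as the fiber square of the submersion $\cS_\Omega|_{U_V\times T^I}$ furnished by~\RL{U}{b}, apply the dimension formula in~\RL{XxX}{a}, and then pin down the dimension of the image $Y:=\cS_\Omega(U_V\times T^I)$ by means of the commutative diagram~\EQN{dia}. Throughout we work under the assumption that $G$ is sparse, which is the operating hypothesis of~\SEC{red} (and without which \RL{U}{b} is not available).

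Set $X:=U_V\times T^I$. By~\RL{U}{a} and the fact that $T$ is an irreducible $3$-dimensional complex manifold, $X$ is a smooth irreducible complex manifold of dimension $\dim X=2|V|+3|I|$. By~\RL{U}{b}, the restriction $\cS_\Omega|_X\c X\to Y$ is a submersion between complex manifolds, so~\RL{XxX}{a} applied to $X\times_{\cS_\Omega}X$ gives that $P(\Omega)$ is a complex manifold of dimension
\[
\dim P(\Omega)=2\dim X-\dim Y=4|V|+6|I|-\dim Y.
\]

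The main computation is to verify $\dim Y=2\sum_{i\in I}|V_i|$. The image $Y$ is irreducible as the image of the irreducible variety~$X$, and the diagram~\EQN{dia} yields $\cP_G\circ\cS_\Omega=\mu\circ\cE_G\circ\pi$. Sparsity of $G$ makes $\cE_G$ dominant by~\RL{em}{a}, so the restriction $\cP_G|_Y\c Y\to\prod_{i\in I}\C^{E_i}$ is dominant as well. For general $\lambda=(\lambda_i)\in\prod_{i\in I}\C^{E_i}$, the fiber
\[
\cP_G^{-1}(\lambda)=\prod_{i\in I}\cE_{G_i}^{-1}(\lambda_i)
\]
decomposes into finitely many orbits under the componentwise $T^I$-action $(p^i)_i\mapsto(\bu_i\circ p^i)_i$, each of dimension $\sum_{i\in I}(2|V_i|-|E_i|)$; indeed each $G_i$ is tight and therefore minimally rigid by~\Cref{thm:laman}, so $\cE_{G_i}^{-1}(\lambda_i)$ is a finite union of $T$-orbits of dimension $2|V_i|-|E_i|$. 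The subvariety $Y$ is $T^I$-invariant since $\cS_\Omega(r,\bu\circ\bt)$ equals the componentwise image of $\cS_\Omega(r,\bt)$ under $\bu$, and thus $Y\cap\cP_G^{-1}(\lambda)$ is a non-empty union of full $T^I$-orbits of dimension $\sum_{i\in I}(2|V_i|-|E_i|)$. The fiber dimension theorem applied to $\cP_G|_Y$ then yields
\[
\dim Y=\sum_{i\in I}|E_i|+\sum_{i\in I}\bigl(2|V_i|-|E_i|\bigr)=2\sum_{i\in I}|V_i|,
\]
and substituting back into the formula above gives the claimed $\dim P(\Omega)=4|V|+6|I|-2\sum_{i\in I}|V_i|$.

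The main obstacle is this last dimension computation: one must match the generic fiber dimension of the \emph{restriction} $\cP_G|_Y$ with that of $\cP_G$ itself. This is where both the $T^I$-invariance of $Y$ and the orbit decomposition of the generic fibers of $\cP_G$ are indispensable, the former guaranteeing that $Y\cap\cP_G^{-1}(\lambda)$ is a union of entire orbits and the latter forcing those orbits to carry the full expected dimension.
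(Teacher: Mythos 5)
Your proof is correct and its skeleton is the same as the paper's: realize $P(\Omega)$ as the fiber square of the submersion $\cS_\Omega|_{U_V\times T^I}$ furnished by \RL{U}{b}, apply the dimension formula of \RL{XxX}{a}, and use $\dim(U_V\times T^I)=2|V|+3|I|$. Where you genuinely diverge is in determining $\dim Y$ for $Y:=\cS_\Omega(U_V\times T^I)$. The paper simply substitutes $\dim\prod_{i\in I}\C^{2V_i}=2\sum_{i\in I}|V_i|$ for $\dim Y$, which is justified because the proof of \RL{U}{b} shows the total derivative of $\cS_\Omega|_{U_V\times T^I}$ is surjective onto the full ambient tangent space $\prod_{i\in I}\fT_{p_i}(U_{V_i})$, so the image is open in $\prod_{i\in I}\C^{2V_i}$ and carries its full dimension. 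You instead recover $\dim Y$ by routing through the commutative diagram \EQN{dia}: dominance of $\cP_G|_Y$ onto $\prod_{i\in I}\C^{E_i}$ via \RL{em}{a}, then the fiber dimension theorem together with the $T^I$-invariance of $Y$ and the orbit decomposition of general fibers of $\cP_G$ coming from minimal rigidity of each tight $G_i$. That computation is sound — the orbit dimension $\sum_{i\in I}(2|V_i|-|E_i|)$ correctly equals $3$ per tight piece with $|V_i|\geq 2$ and $2$ per singleton — but it invokes more machinery than necessary, since the surjectivity of the differential already yields $\dim Y$ immediately. Your explicit sparsity hypothesis is the right call: although the lemma is stated for any graph with a tight decomposition, the paper's own proof also leans on \RL{U}{b}, which requires sparsity, and every use of the lemma (for $P(G^\m)$ and $P(\Omega')$ in \Cref{lem:codim}) concerns a sparse graph.
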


\begin{proof}
It follows from \Cref{lem:U,lem:XxX} that
$P(G)$ is complex manifold of dimension
$2\dim(U_V\times T^I)-\dim\prod_{i\in I}\C^{2V_i}$.
Since $\dim T=3$, we find that $\dim P(G)$ is as asserted.
\end{proof}

Let $G=(V,E)$ is a sparse graph with marking $\m\in V^Z$
\st $(\m_0,\m_1,\m_2,\m_3)=(0,1,2,3)$
and suppose that a tight subgraph of~$G$ that has $2$ as one of its vertices
contains neither the vertex~$1$ nor the vertex~$3$.
In this case, there may exist points $(r,\bt;s,\bu)$ in the fiber square~$P(G^\m)$
of the reduced graph~$G^\m$
whose fibers \wrt the comparison map $\CM\c P(G)\to P(G^\m)$
has higher dimension than expected.
Indeed, if
$d(r_1,r_3)=d(s_1,s_3)$ and $d(r_2,r_3)=d(s_2,s_3)$, then \RL{fib}{e}
suggests that the dimension is one instead of two.
Thus, in order to satisfy the hypothesis of \Cref{lem:if},
we need to show that the following subset has codimension two in $P(G^\m)$:
\[
\Psi:=\set{(r,\bt;s,\bu)\in P(G^\m)}{\sd(r_1,r_3)=\sd(s_1,s_3) \text{ and } \sd(r_2,r_3)=\sd(s_2,s_3)}.
\]
For this purpose, we construct the graph~$G'$ by
adding to $G^\m$ the edges~$\{1,3\}$ and $\{2,3\}$,
and show that $G'$ is sparse.
Next, we apply the dimension formula for fiber squares at \Cref{lem:dim}
to both $P(G^\m)$ and $P(\Omega')$,
where $\Omega'$ is a certain tight decomposition of $G'$.
As $P(\Omega')$ can be projected onto $\Psi\subset P(G^\m)$, we show that $\codim\Psi=2$.
The tight-decomposition $\Omega'$ is constructed by appending to the unique max-tight decomposition of~$G$
the two single-edge graphs consisting of the edge $\{1,3\}$ or $\{2,3\}$.
Notice that the max-tight decomposition of $G'$ may be trivial (see for example \Cref{fig:codim}).
For this reason, we consider fiber squares \wrt tight-decompositions that are not necessarily max-tight.
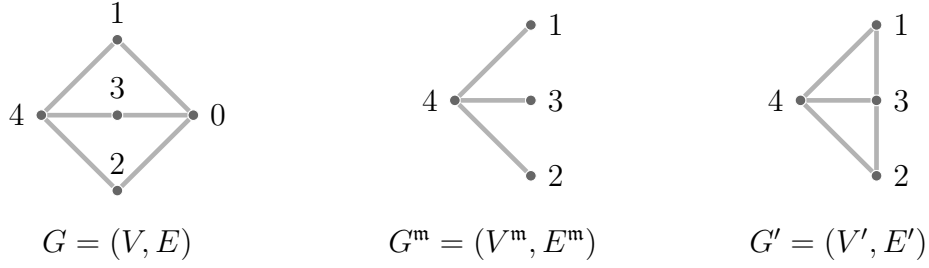
\begin{figure}[!ht]
\centering
\csep{1cm}
\begin{tabular}{ccc}
\begin{tikzpicture}
\node[vertex] (v) at (-1,0) [label=left:$4$] {};
\node[vertex] (i) at (0,1) [label=above:$1$] {};
\node[vertex] (j) at (0,0) [label=above:$3$] {};
\node[vertex] (k) at (0,-1) [label=above:$2$] {};
\node[vertex] (w) at (1,0) [label=right:$0$] {};
\path (v) edge[edge] (i)
      (v) edge[edge] (j)
      (v) edge[edge] (k)
      (w) edge[edge] (i)
      (w) edge[edge] (j)
      (w) edge[edge] (k);
\end{tikzpicture}
&
\begin{tikzpicture}
\node[vertex] (v) at (-1,0) [label=left:$4$] {};
\node[vertex] (i) at (0,1) [label=right:$1$] {};
\node[vertex] (j) at (0,0) [label=right:$3$] {};
\node[vertex] (k) at (0,-1) [label=right:$2$] {};
\path (v) edge[edge] (i)
      (v) edge[edge] (j)
      (v) edge[edge] (k);
\end{tikzpicture}
&
\begin{tikzpicture}
\node[vertex] (v) at (-1,0) [label=left:$4$] {};
\node[vertex] (i) at (0,1) [label=right:$1$] {};
\node[vertex] (j) at (0,0) [label=right:$3$] {};
\node[vertex] (k) at (0,-1) [label=right:$2$] {};
\path (v) edge[edge] (i)
      (v) edge[edge] (j)
      (v) edge[edge] (k)
      (j) edge[edge] (i)
      (j) edge[edge] (k);
\end{tikzpicture}
\\
$G=(V,E)$ & $G^\m=(V^\m,E^\m)$ & $G'=(V',E')$
\end{tabular}
\caption{Three sparse graphs. The max-tight subgraphs
of $G$ and $G^\m$ consist of single edges. The graph $G'$ is tight and thus its max-tight decomposition is trivial.}
\label{fig:codim}
\end{figure}
%

\begin{lemma}
\label{lem:codim}
If $G=(V,E)$ be a sparse graph with marking $\m\in V^Z$
\st $(\m_0,\m_1,\m_2,\m_3)=(0,1,2,3)$
and
$\{1,3\},\{2,3\}\nsubseteq V^\m_i$ for all $i\in I^\m$,
then the following subset has codimension two in the fiber square~$P(G^\m)$:
\[
\Psi:=\set{(r,\bt;s,\bu)\in P(G^\m)}{\sd(r_1,r_3)=\sd(s_1,s_3) \text{ and } \sd(r_2,r_3)=\sd(s_2,s_3)},
\]
where $\{(V^\m_i,E^\m_i)\}_{i\in I^\m}$ is the max-tight decomposition of the reduced graph~$G^\m$.
\end{lemma}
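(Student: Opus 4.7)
The plan is to introduce the auxiliary graph $G' := (V^\m, E^\m \cup \{\{1,3\}, \{2,3\}\})$ and realize $\Psi$ as (the closure of) the image of the fiber square of an associated tight decomposition of $G'$; the codimension will then drop out of a direct dimension count. First, observe that the hypothesis forces $\{1,3\}, \{2,3\} \notin E^\m$, since any such edge in $E^\m$ would lie in some $E^\m_i$ and drag both its endpoints into $V^\m_i$.

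The first substantial step is to show that $G'$ is sparse. I argue by contradiction: suppose $H' \subseteq G'$ satisfies $|E(H')| \geq 2|V(H')| - 2$. In each of the two non-trivial cases (exactly one new edge in $H'$, or both, since otherwise $H' \subseteq G^\m$ is already sparse), remove the new edges from $H'$ to get $\tilde H \subseteq G^\m$ and then reattach the vertex $\m_0 = 0$ together with the corresponding edges of $E[\m_0]$ (two edges if only one new edge is present, all three if both are) to obtain a subgraph $H'' \subseteq G$. A direct edge count combined with the sparsity of $G$ forces $H''$ to be tight. A max-tight component $G_j$ of $G$ containing $H''$ lies in $I^\m$ because $|E_j[\m_0]| \geq 2$ and $V_j \neq \{\m_0\}$. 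In the single-new-edge case, $V^\m_j$ contains $\{1,3\}$ or $\{2,3\}$, contradicting the hypothesis directly. In the both-new-edges case, $\m(Z) \subseteq V_j$, so the marking definition forces $G_j$ to be a $1$-extension with respect to $\m_0$ and the edge $\{\m_2,\m_3\} = \{2,3\}$; by the definition of $E^\m_j$ this inserts $\{2,3\}$ into $E^\m_j$, whence $\{2,3\} \subseteq V^\m_j$, again contradicting the hypothesis.

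Next, let $\Omega' := \{G^\m_i\}_{i \in I^\m} \cup \{(\{1,3\}, \{\{1,3\}\}), (\{2,3\}, \{\{2,3\}\})\}$. Since $\{1,2,3\} \subseteq V^\m$, each component is tight, and the new edges lie in no $E^\m_i$, this is a tight decomposition of $G'$. Applying \Cref{lem:dim} to both $P(G^\m)$ and $P(\Omega')$ yields
\[
\dim P(\Omega') = \dim P(G^\m) + 4,
\]
since each appended single-edge component contributes $6 - 2\cdot 2 = 2$ to the dimension formula. Consider the projection $\pi\c P(\Omega') \to P(G^\m)$ that forgets the two transformations attached to the new single-edge components. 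For every point of $P(\Omega')$ the compatibility conditions for these components force $\sd(r_1, r_3) = \sd(s_1, s_3)$ and $\sd(r_2, r_3) = \sd(s_2, s_3)$, because transformations preserve squared distance; hence $\pi$ lands in $\Psi$. Conversely, for a generic point of $\Psi$ a lift exists, since any two pairs of distinct points in $\C^2$ with equal squared distance are related by some transformation in $T$. On a generic fiber, each of the two forgotten $\bt$-coordinates is free in $T$ while the corresponding $\bu$-coordinate is uniquely determined by the trivial stabilizer of two distinct points in $T$, so the fiber has dimension $2\dim T = 6$. Therefore $\dim \Psi = \dim P(\Omega') - 6 = \dim P(G^\m) - 2$, which is the codimension-two claim.

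The main obstacle is the sparsity verification for $G'$ in the case that $H'$ contains both new edges, where the $1$-extension clause in the definition of a marking is indispensable for excluding the existence of a max-tight component of $G$ that contains $\{0,1,2,3\}$ without thereby producing the edge $\{\m_2,\m_3\}$ in some $E^\m_i$.
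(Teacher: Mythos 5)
Your proof is correct and follows the paper's overall strategy exactly in its second half: construct $G' = (V^\m, E^\m \cup \{\{1,3\},\{2,3\}\})$, append the two single-edge graphs to the max-tight decomposition of $G^\m$ to get a tight decomposition $\Omega'$ of $G'$, compute $\dim P(\Omega') - \dim P(G^\m) = 4$ via \Cref{lem:dim}, and show that the forgetful projection to $P(G^\m)$ has image $\Psi$ with six-dimensional fibers.

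Where you diverge is in establishing that $G'$ is sparse. The paper works entirely inside $G^\m$: it first observes $(V^\m, E^\m\cup\{\{1,3\}\})$ is sparse, then asserts that a sparsity violation in $G'$ would force the existence of some $j\in I^\m$ for which $(V^\m_j, E^\m_j\cup\{\{1,3\},\{2,3\}\})$ is not sparse ``and becomes tight after we remove the edge $\{2,3\}$'' — but since $G^\m_j$ is already tight, that intermediate graph cannot literally be tight, so the paper's phrasing of this step is confusing at best. You instead lift the offending subgraph $H'\subset G'$ back to a subgraph $H''\subset G$ by deleting the new edges and reattaching the marking vertex $0$ with its incident edges, then perform the edge count in $G$ to force $H''$ tight, locate a max-tight $G_j\supset H''$, check $j\in I^\m$, and in the two-new-edges case invoke the $1$-extension clause of the marking definition to deduce $\{2,3\}\in E^\m_j$, contradicting the hypothesis. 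This lift-to-$G$ argument is cleaner and more explicit than the paper's, and it makes visible exactly why the $1$-extension clause in the definition of a marking is needed; both proofs do rely on that clause, but yours shows the mechanism transparently. One small remark: in your one-new-edge case the reattachment is not actually needed, since $\tilde{H}$ is already a tight subgraph of $G^\m$ containing both endpoints of the new edge and is therefore contained in some $G^\m_j$, giving the contradiction directly — but the lift works too. You could also make explicit (as you do implicitly) that the hypothesis forces $\{1,3\},\{2,3\}\notin E^\m$ and that the $1$-extension clause never fires in forming $G^\m$, so that $E^\m\subset E$ and the lift is well-defined.
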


\begin{proof}
Since $\{1,3\}\nsubseteq V^\m_i$ for all~$i\in I^\m$,
we find that the graph $(V^\m,E^\m\cup\{\{1,3\}\})$ is sparse.
Suppose by contradiction that the graph~$G':=(V^\m,E^\m\cup\{\{1,3\},\{2,3\}\}$ is not sparse.
Then there must exists $j\in I^\m$ \st
$(V^\m_j,E^\m_j\cup\{\{1,3\},\{2,3\}\})$ is not sparse and becomes tight
after we remove the edge $\{2,3\}$.
We arrived at a contradiction as $\{1,2,3\}\subset V^\m_j$
and thus we established that $G'$ is sparse.

Let $H_{u,v}$ denote the graph consisting of a single edge~$\{u,v\}$.
We set
$I':=I^\m\cup\{a,b\}$ for some indices~$a,b\notin I^\m$.
Let $G'_a:=H_{1,3}$,
$G'_b:=H_{2,3}$
and
$G'_i:=G^\m_i$
for all $i\in I^\m$.
Notice that the sparse graph~$G'$ has the following tight decomposition:
\[
\Omega':=\{G'_i\}_{i\in I'}=\{(V'_i,E'_i)\}_{i\in I'}.
\]
We find that $|V'|-|V^\m|=0$,
$|I'|-|I^\m|=2$ and
\[
\sum_{i\in I'}|V'_i|-\sum_{i\in I^\m}|V^\m_i|
=
|V'_a|+|V'_b|=4.
\]
Thus, the dimension formula at \Cref{lem:dim} shows that
\[
\dim P(\Omega')-\dim P(G^\m)=4\left(|V'|-|V^\m|\right)+6\left(|I'|-|I^\m|\right)-2\left(|V'_a|+|V'_b|\right)=4.
\]
Since $I^\m\subset I'$ by construction, there exists a projection
\[
\tau\c P(\Omega')\to P(G^\m),\quad
(r,\bt;s,\bu)\mapsto(r,\bt|_{I^\m};s,\bu|_{I^\m}).
\]
We would like to determine the dimension of the fiber of the projection~$\tau$
at a point $(r,\bt|_{I^\m};s,\bu|_{I^\m})$.
The transformation tuple~$(\bt_a,\bt_b)\in T^2$ can be arbitrary.
The transformation~$\bt_a$ uniquely determines~$\bu_a$,
since $\bt_a(r_i)=\bu_a(s_i)$ for $i\in\{1,3\}$.
Similarly, $\bt_b$ uniquely determines~$\bu_b$.
We deduce that all the fibers of~$\tau$ have dimension
$2\dim T=6$ and thus
\[
\dim \tau(P(\Omega'))=\dim P(\Omega')-6=\dim P(G^\m)-2.
\]
We find that $\Psi=\tau(P(\Omega'))$ as a straightforward consequence of the definitions
and thus we established that $\Psi\subset P(G^\m)$ has codimension two.
\end{proof}

In the following \Cref{lem:ifc}, we characterize the dimensions and irreducibility of
the fibers of a comparison map $\CM\c P(G)\to P(G^\m)$.
This shows that the hypothesis of \Cref{lem:if} is satisfied
and thus we can conclude that if $P(G^\m)$ is irreducible,
then $P(G)$ must be irreducible as well.

\begin{lemma}
\label{lem:ifc}
Suppose that $G=(V,E)$ is a sparse graph with marking $\m\in V^Z$
and let $F_y:=\CM^{-1}(y)$ for $y\in P(G^\m)$
denote the fiber of the comparison map $\CM$.
Then, for general $p\in P(G^\m)$ and all $c\in\Z_{>0}$ the following holds:
\begin{Mlist}
\item $F_p$ is irreducible,
\item $\dim F_p=\dim P(G)-\dim P(G^\m)$, and
\item $\dim \set{y\in P(G^\m)}{\dim F_y=\dim P(G)-\dim P(G^\m)+c}<\dim P(G^\m)-c$.
\end{Mlist}
\end{lemma}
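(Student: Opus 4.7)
The plan is to describe the fiber $F_p$ over $p = (r, \bt; s, \bu) \in P(G^\m)$ explicitly and verify the three bulleted properties by combining \Cref{lem:fib,lem:codim,lem:dim}. An element of $F_p$ is determined by the points $r'_v, s'_v \in \C^2$ for $v \in V \setminus V^\m$ together with, for each $i \in I \setminus I^\m$, a pair of transformations $(\bt'_i, \bu'_i) \in T \times T$. The condition $(r', \bt'; s', \bu') \in P(G)$ enforces, for every $i \in I$ with $\m_0 \in V_i$, that $\bt'_i(r'_v) = \bu'_i(s'_v)$ for all $v \in V_i$. Let $a, b, c$ count, respectively, the indices $i \in I^\m$ with $\m_0 \in V_i$, the indices $i \in I \setminus I^\m$ with $V_i = \{\m_0\}$, and the indices $i \in I \setminus I^\m$ with $|E_i[\m_0]| = 1$. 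A direct computation using \Cref{lem:dim} then gives the target dimension $\dim P(G) - \dim P(G^\m) = 4(|V| - |V^\m|) + 4b + 2c - 2a$.

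For each $i \in I^\m$ with $\m_0 \in V_i$ the transformation $t_i := \bu_i^{-1} \circ \bt_i \in T$ is fixed by $p$, and the $P(G)$-constraint forces $s'_{\m_0} = t_i(r'_{\m_0})$. Setting $W := Z \setminus \{0\}$, this places the pair $(r'_{\m_0}, s'_{\m_0})$ into the set $\Gamma'$ of \Cref{lem:fib} when $a \geq 1$, and into $\Gamma$ when $a = 0$; for general $p$ the set $\Lambda$ of \Cref{lem:fib} equals the combinatorially-determined $\Lambda_{\mathrm{graph}} := \{\{v, w\} \subset W : \{\m_v, \m_w\} \subset V_j^\m \text{ for some } j \in I^\m\}$. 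The transformation cosets decouple from the $\Gamma$-factor: for each single-edge index the pair $(\bt'_i, \bu'_i)$ varies in an irreducible $3$-dimensional coset of $T$ (once the associated distance condition holds, $\bu'^{-1}_i \circ \bt'_i$ is uniquely determined and $\bt'_i$ is free), while for the isolated-vertex index it varies in an irreducible $4$-dimensional coset; the free endpoints $\m_v \in V \setminus V^\m$ introduce further irreducible factors that integrate into the count. A case analysis on $|Z| \in \{1, 2, 3, 4\}$ combined with \Cref{lem:fib} then verifies that $F_p$ is irreducible of the predicted dimension for general $p$.

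The third bullet is established by controlling the jump locus. Since $\dim \Gamma$ (or $\dim \Gamma'$) in \Cref{lem:fib} depends on $\Lambda$ only through its cardinality and is constant within each cardinality class except for the transition from $|\Lambda| = 2$ to $|\Lambda| = 3$ when $W = \{1, 2, 3\}$, a fiber-dimension jump by $c \geq 1$ requires $c$ additional distance equalities $\sd(r_{\m_v}, r_{\m_w}) = \sd(s_{\m_v}, s_{\m_w})$ beyond those forced by $\Lambda_{\mathrm{graph}}$. A sparsity argument shows that uniting tight subgraphs of $G^\m$ witnessing an expanded $\Lambda_{\mathrm{graph}}$ with the edges in $E[\m_0]$ would yield a tight subgraph of $G$ properly containing a single-edge max-tight, contradicting maximality; this rules out the graph-theoretic configurations in which only one additional coincidence could trigger the jump. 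After a suitable relabelling of $\m_1, \m_2, \m_3$ the hypothesis of \Cref{lem:codim} is satisfied and yields the required codimension-$2$ bound, with an analogous argument handling $c \geq 2$. The main obstacle is the case analysis itself; the delicate case is $|Z| = 4$ with a mixed $0$-extension and single-edge partition of $E[\m_0]$, where combining the transformation constraint from the $0$-extension with the single-edge distance condition reduces $\dim \Gamma'$ from $2$ to $1$ via the symmetry-line construction of case (g) of \Cref{lem:fib}.
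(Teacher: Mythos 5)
Your proposal follows essentially the same approach as the paper's: parametrize $F_p$ via the $\Gamma$ or $\Gamma'$ of \Cref{lem:fib} together with transformation-coset factors, verify the expected fiber dimension via \Cref{lem:dim}, and bound the jump locus with \Cref{lem:codim} after ruling out $|\Lambda_{\mathrm{graph}}|=2$ by a sparsity argument (your $\Lambda_{\mathrm{graph}}$ is the paper's $M$). One wrinkle worth flagging: the claim that a jump by $c\geq 1$ requires $c$ additional distance equalities beyond those forced by $\Lambda_{\mathrm{graph}}$ is not quite accurate — the only possible nontrivial jump raises $\dim\Gamma$ (or $\dim\Gamma'$) from $1$ to $2$ and requires $3-|\Lambda_{\mathrm{graph}}|$ extra coincidences, so $c$ can only ever be $1$ — but this is harmless, since what actually drives the argument is the codimension-two bound from \Cref{lem:codim}, which your sparsity observation (matching the paper's reduction of $|M|$ to $\{0,1,3\}$) makes applicable.
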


\begin{proof}
Let $\{(V_i,E_i)\}_{i\in I}$
and
$\{(V^\m_i,E^\m_i)\}_{i\in I^\m}$
be the max-tight decompositions of the graphs $G=(V,E)$ and $G^\m=(V^\m,E^\m)$, \resp.
For ease of notation, we assume \Wlog that
$\m_z=z$ for all $z\in Z$.
We consider the following arbitrary but fixed point in $P(G^\m)$:
\[
y:=(\tr,\tbt;\ts,\tbu).
\]
In accordance with \Cref{lem:fib}, we define the following sets
that depend on $\tr,\ts$:
\[
W:=Z\setminus\{0\},\qquad
P:=\{\tr_i\}_{i\in W},\qquad
Q:=\{\ts_i\}_{i\in W},
\]
\begin{align*}
\Lambda &:=\set{\{v,w\}\subset W}{\sd(\tr_v,\tr_w)=\sd(\ts_v,\ts_w) \text{ and }v\neq w},\\
\Gamma  &:=\set{(r_0,s_0)\in\C^2\times\C^2}{\sd(r_0,\tr_w)=\sd(s_0,\ts_w) \text{ for all } w\in W},\\
\Gamma' &:=\set{(r_0,s_0)\in \Gamma}{t(r_0)=s_0,~ t(\tr_1)=\ts_1 \text{ and } t(\tr_2)=\ts_2 \text{ for some }t\in T}.
\end{align*}
Notice that $\tr,\ts\in U_{V^\m}$ by definition so that
$|P|=|W|$ and if $|W|=3$, then $P,Q\subset\C^2$ are not collinear by \RL{U}{c}.
We define
\[
M:=\set{\{u,v\}\subset W}{u\neq v \text{ and }  u,v\in V^\m_i \text{ for some }i\in I^\m }.
\]
As a direct consequence of the definitions, we have
\[
M\subset \Lambda\subset\{\{1,2\},\{1,3\},\{2,3\}\}.
\]
Notice that if $M=\{\{1,2\},\{1,3\},\{2,3\}\}$, then $0,1,2,3\in V^\m_i$ for some $i\in I^\m$.
If $\{1,2\},\{1,3\}\in M$, then $0,1,2,3\in V_i$ for some $i\in I$,
which implies that \Wlog $\{2,3\}\in M$ by the definition of reduced graphs.
Hence, up to relabeling of vertices, exactly one of the following cases holds:
\begin{enumerate}[topsep=0pt,itemsep=0pt,leftmargin=20mm,label=\textbf{Case~\arabic*.},ref={Case~\arabic*}]
\item\label{case1} $W=\{1,2,3\}$ and $M=\varnothing$.
\item\label{case2} $W=\{1,2,3\}$ and $M=\{\{1,2\}\}$.
\item\label{case3} $W=\{1,2,3\}$ and $M=\{\{1,2\},\{1,3\},\{2,3\}\}$.
\item\label{case4} $W=\{1,2\}$ and $M\subset\{\{1,2\}\}$.
\item\label{case5} $W\subset \{1\}$ and $M=\varnothing$.
\end{enumerate}
As a direct consequence of the definitions, we have
\[
F_y\subset
\set{(r,\bt;s,\bu)\in P(G)}{(r_0,s_0)\in\Gamma
}.
\]
If $\{1,2\}\in M$, then the realizations $r, s\in\C^{2V}$ send the vertices $\{0,1,2\}$
to a pair of triangles in~$\C^2$ that are related by some transformation~$t\in T$.
Therefore,
\[
F_y\subset
\set{(r,\bt;s,\bu)\in P(G)}{(r_0,s_0)\in\Gamma''
},
\]
where
\[
\Gamma'':=
\begin{cases}
\Gamma' & \text{if } \{1,2\}\in M, \text{ and}\\
\Gamma  & \text{otherwise}.
\end{cases}
\]
If $y$ is general, then $M=\Lambda$ as a straightforward consequence of the definitions.
We prove the main assertion by applying \Cref{lem:fib,lem:codim} to each case separately.
\begin{Mlist}
\item \ref{case1}: $W=\{1,2,3\}$ and $M=\varnothing$.
In this case, $V=V^\m\cup\{0\}$ and $I=I^\m\cup\{a,b,c\}$ \st $G_a=H_{0,1}$, $G_b=H_{0,2}$, $G_c=H_{0,3}$,
where $H_{u,v}$ denotes the graph consisting of the edge~$\{u,v\}$.
It follows from \Cref{lem:dim} that
\[
\dim P(G)-\dim P(G^\m)=4(|V|-|V^\m|)+6(|I|-|I^\m|)-2(\sum_{i\in I}|V_i|-\sum_{i\in I^\m}|V^\m_i|)=10.
\]
Suppose that $(r,\bt;s,\bu)\in F_y$ so that
\[
y=(r|_{V^\m},\bt|_{I^\m};s|_{V^\m},\bu|_{I^\m})
\quad\text{and}\quad
\bt_i\circ r|_{V_i}=\bu_i\circ s|_{V_i}
\text{ for all }i\in\{a,b,c\}.
\]
For all $i\in\{a,b,c\}$, the transformation $\bu_i$ is uniquely determined by~$\bt_i$ and~$y$.
This implies that the data consisting of $(r_0,s_0;\bt_a,\bt_b,\bt_c)\in\Gamma''\times T^3$ together with~$y$
recovers uniquely the point $(r,\bt;s,\bu)$ in the fiber $F_y$.
We established that the following function is injective
\[
\varphi\c F_y\hookrightarrow\Gamma''\times T^3,\quad (r,\bt;s,\bu)\mapsto (r_0,s_0;\bt_a,\bt_b,\bt_c).
\]
First suppose that $y$ is general so that $M=\Lambda=\varnothing$.
It follows from \Cref{lem:fib} that $\Gamma''$ is irreducible and $\dim \Gamma''=1$.
Since $\dim \Gamma''\times T^3=10$ and $\dim F_y=\dim \varphi(F_y)$,
we find that $\dim F_y\leq 10$.
As $y$ is general, we have $\dim F_y\geq\dim P(G)-\dim P(G^\m)=10$
and thus
\[
\dim F_y=\dim P(G)-\dim P(G^\m)=10.
\]
This implies that $\varphi(F_y)\subset\Gamma''\times T^3$ is Zariski dense in
the irreducible set $\Gamma''\times T^3$
and thus the fiber~$F_y$ is irreducible.

Next, we assume that $\dim F_y=\dim P(G)-\dim P(G^\m)+c=10+c$ for some~$c\in \Z_{>0}$.
We may assume up to relabeling vertices that $\Lambda$
is equal to either $\varnothing$, $\{\{1,2\}\}$, $\{\{1,2\},\{2,3\}\}$ or $\{\{1,2\},\{1,3\},\{2,3\}\}$.
Hence, it follows from \Cref{lem:fib} that $\dim\Gamma''\in\{1,2\}$ (although $\Gamma''$ may be reducible).
Since $10\leq \dim \Gamma''\times T^3\leq 11$ and $c>0$,
we observe that $\dim F_y=\dim \varphi(F_y)=11$, $\dim \Gamma''=2$ and $c=1$.
Since $\dim \Gamma''=2$ it follows from \Cref{lem:fib} that $\Lambda=\{\{1,2\},\{1,3\},\{2,3\}\}$.
Notice that $\{1,3\},\{2,3\}\notin M$ and thus it follows from \Cref{lem:codim} that
\[
\dim\set{y\in P(G^\m)}{\dim F_y=11}\leq \dim P(G^\m)-2<\dim P(G^\m)-c.
\]
Therefore, the main assertion holds for this case.
\end{Mlist}
The remaining four cases are similar and left to the reader.
For \ref{case2}, we have
$\dim P(G)-\dim P(G^\m)=4$,
$\dim\Gamma''\in\{1,2\}$,
$\dim F_y=\dim\Gamma''+\dim T$,
\[
\Lambda\in\Bigl\{\{\{1,2\}\},~\{\{1,2\},\{2,3\}\},~\{\{1,2\},\{1,3\},\{2,3\}\}\Bigr\}
\]
and $\dim\set{y\in P(G^\m)}{\dim F_y=5}\leq \dim P(G^\m)-2$.
For \ref{case3}, \ref{case4} and \ref{case5},
$\dim F_y=\dim P(G)-\dim P(G^\m)$ for all $y\in P(G^\m)$.
In all cases, $F_y$ is irreducible for general $y\in P(G^\m)$.
\end{proof}

\begin{lemma}[induction step]
\label{lem:ip}
If $G=(V,E)$ is a sparse graph with marking $\m\in V^Z$
\st $P(G^\m)$ is irreducible,
then $P(G)$ is irreducible.
\end{lemma}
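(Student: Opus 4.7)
The plan is to apply \Cref{lem:if} to the comparison map $f := \CM\c P(G) \to P(G^\m)$, taking $X := P(G)$ and $Y := P(G^\m)$. The hypothesis of the induction step directly supplies that $Y$ is irreducible. To verify that $X$ is equidimensional, I would invoke \RL{U}{b}, which establishes that the restricted subgraph map is a submersion, and then \RL{XxX}{a}, which asserts that the associated fiber product is a complex manifold of the specific dimension recorded in \Cref{lem:dim}. Since this dimension is a single number, the complex manifold $P(G)$ has constant local dimension, hence is equidimensional, and it sits inside $(U_V\times T^I)^2$ as a Zariski dense subset of a variety in the sense required by \Cref{lem:if}.

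The three remaining bulleted hypotheses of \Cref{lem:if} are exactly the three conclusions of \Cref{lem:ifc} applied to $f = \CM$: namely, for general $p \in P(G^\m)$ the fiber $\CM^{-1}(p)$ is irreducible, its dimension equals $\dim P(G) - \dim P(G^\m)$, and for every $c \in \Z_{>0}$ the jumping locus
\[
\set{y \in P(G^\m)}{\dim \CM^{-1}(y) = \dim P(G) - \dim P(G^\m) + c}
\]
has dimension strictly less than $\dim P(G^\m) - c$. With all three conditions in place, \Cref{lem:if} immediately yields that $X = P(G)$ is irreducible.

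Thus the present lemma is essentially a synthesis of \Cref{lem:ifc} and \Cref{lem:if}. The main obstacle has already been absorbed into \Cref{lem:ifc}: namely, the codimension-$2$ bound on the jumping locus in \ref{case1} of its proof, which relied on \Cref{lem:codim}. That bound was needed precisely because a triangle congruence $\{r_1,r_2,r_3\} \cong \{s_1,s_2,s_3\}$ can occur even when no max-tight subgraph contains all three vertices, and one must rule this out except in codimension $2$ so that the third bullet of \Cref{lem:if} is satisfied with $c = 1$. Given this input, the verification is structural and no further geometric analysis is required.
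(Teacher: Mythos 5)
Your proposal is correct and follows essentially the same route as the paper: recognize $P(G)$ as equidimensional (via the complex manifold structure from \Cref{lem:dim}, itself resting on \RL{U}{b} and \RL{XxX}{a}), then feed the three conclusions of \Cref{lem:ifc} into \Cref{lem:if} applied to the comparison map $\CM$. The only small omission is that you should explicitly invoke \Cref{lem:cm} to justify that the comparison map $\CM\c P(G)\to P(G^\m)$ actually exists as a morphism between Zariski dense subsets of varieties, rather than taking this for granted.
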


\begin{proof}
By \Cref{lem:cm},
there exists a comparison map $\CM\c P(G)\to P(G^\m)$,
which is by definition a morphism.
By assumption, $P(G^\m)$ is irreducible and
we know from \Cref{lem:dim} that $P(G)$ is a complex manifold and thus equidimensional.
It now follows from \Cref{lem:ifc} that the hypothesis of \Cref{lem:if} is satisfied
and thus $P(G)$ must be irreducible.
\end{proof}

\begin{proposition}
\label{prp:PG}
If $G$ is a sparse graph, then $P(G)$ is irreducible.
\end{proposition}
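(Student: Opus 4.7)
The plan is to prove this by induction on $|V|$. The induction step has essentially been pre-packaged as \Cref{lem:ip}: given a sparse graph $G$ with more than one vertex, \Cref{lem:cm} produces a marking $\m \in V^Z$ whose reduced graph $G^\m$ is sparse and admits a comparison map $\CM\c P(G)\to P(G^\m)$, and \Cref{lem:ip} says that irreducibility of $P(G^\m)$ implies irreducibility of $P(G)$. Since $\m_0 \notin V^\m$, we have $|V^\m| \leq |V|-1 < |V|$, so the inductive hypothesis applies to $G^\m$ and the step goes through.

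The base case is $|V|=1$. Write $V=\{v\}$; then $E=\varnothing$, the unique max-tight decomposition $\{G_i\}_{i\in I}$ has $|I|=1$ with the single factor being the graph $G$ itself, and $U_V = \C^2$ since there are no sparse subgraphs on $\geq 2$ vertices to impose rank conditions. The subgraph map is $\cS_G\c \C^2 \times T \to \C^2$, $(r,\bt) \mapsto \bt(r)$. Hence
\[
P(G) = \set{\bigl((r,\bt),(s,\bu)\bigr)\in (\C^2\times T)^2}{\bt(r)=\bu(s)}.
\]
The projection $(r,\bt,\bu)\mapsto \bigl((r,\bt),(\bu^{-1}(\bt(r)),\bu)\bigr)$ is an isomorphism from the smooth irreducible variety $\C^2 \times T \times T$ onto $P(G)$, so $P(G)$ is irreducible (and has dimension $8$, consistent with \Cref{lem:dim}).

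Putting the base and the induction step together proves that $P(G)$ is irreducible for every sparse graph $G$. The only real content is the induction step, which is already handled by \Cref{lem:ip}; the hard technical work has therefore been packaged into the fiber-dimension analysis of \Cref{lem:ifc} and the codimension bound of \Cref{lem:codim}, so no further obstacle remains at this stage.
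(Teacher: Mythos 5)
Your proof takes exactly the same route as the paper: induction on $|V|$, with the inductive step packaged into \Cref{lem:cm} and \Cref{lem:ip}, and a direct check for the base case $|V|=1$. In fact your base case is slightly more careful than the paper's: the paper writes $P(G)\cong (U_V\times T)^2$, which by the dimension count of \Cref{lem:dim} (namely $\dim P(G)=8$, not $10$) must be a typo, whereas your parametrization $\C^2\times T\times T \xrightarrow{\sim} P(G)$, $(r,\bt,\bu)\mapsto\bigl((r,\bt),(\bu^{-1}(\bt(r)),\bu)\bigr)$, gives the correct identification $P(G)\cong U_V\times T^2$ with the right dimension.
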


\begin{proof}
We prove by induction on~$|V|$ that $P(G)$ is irreducible, where $G=(V,E)$.

Induction basis: $|V|=1$.
In this case $P(G)\cong (U_V\times T)^2$ and thus $P(G)$ is irreducible.

Induction step: $|V|>1$.
By \Cref{lem:cm}, there exists a marking~$\m\in V^Z$
\st its reduced graph~$G^\m=(V^\m,E^\m)$ is sparse.
Since $|V^\m|<|V|$, we find that the fiber square~$P(G^\m)$ is irreducible by the induction hypothesis
and thus $P(G)$ must be irreducible by the induction step: \Cref{lem:ip}.
\end{proof}

\begin{proof}[Proof of \Cref{thm:main}]
By \RL{em}{c}, the sparse graph $G$ admits a unique max-tight decomposition.
It follows from \Cref{prp:PG} that $P(G)$ is irreducible
and thus the proof is concluded by \Cref{prp:main}.
\end{proof}

\begin{lemma}
\label{lem:cor}
If $G=(V,E)$ is a sparse graph with max-tight decomposition~$\{G_i\}_{i\in I}$,
then the following are equivalent:
\begin{Menum}
\item\label{lem:cor:i}
The two realizations $r,s\in\C^{2V}$ lie in the same irreducible
component~$C$ of the fiber~$\cE_G^{-1}(\lambda)$ for some $\lambda\in\C^E$.
\item\label{lem:cor:ii}
There exists $\bt,\bu\in T^I$ \st
$(r,\bt),(s,\bu)\in \C^{2V}\times T^I$
lie in the same irreducible component of a fiber
of~$\cS_G$.
\end{Menum}
\end{lemma}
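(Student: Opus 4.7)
The plan is to derive both implications from the commutative diagram~\eqref{eqn:dia} and the irreducibility of the fiber square $P(G)$ established in~\Cref{prp:PG}. The direction~\ref{lem:cor:ii}~$\Rightarrow$~\ref{lem:cor:i} is immediate: given an irreducible component $D$ of a fiber $\cS_G^{-1}(y)$ containing both $(r,\bt)$ and $(s,\bu)$, the commutativity of~\eqref{eqn:dia} forces $\pi(D)$ to be an irreducible subset of $\cE_G^{-1}(\lambda)$ for $\lambda:=\mu^{-1}(\cP_G(y))$; since $\pi(D)$ contains both $\pi(r,\bt)=r$ and $\pi(s,\bu)=s$, these points lie in a common irreducible component.

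For the converse~\ref{lem:cor:i}~$\Rightarrow$~\ref{lem:cor:ii}, I would first lift the irreducible component $C\ni r,s$ to the irreducible subset $C\times T^I=\pi^{-1}(C)$ of $\C^{2V}\times T^I$. By~\eqref{eqn:dia}, the image $\cS_G(C\times T^I)$ lies in $\cP_G^{-1}(\mu(\lambda))$ and, being irreducible, is contained in a single irreducible component $F$ of this fiber. Since each $G_i$ is tight, for general $\lambda$ the components of $\cE_{G_i}^{-1}(\lambda|_{E_i})$ are $T$-orbits, so $F=\prod_{i\in I} F_i$ is a single $T^I$-orbit for the action $\bu\cdot(y'_i)_i:=(\bu_i\circ y'_i)_i$ on $\prod_{i\in I}\C^{2V_i}$. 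Because $(r|_{V_i})_{i\in I}=\cS_G(r,\mathrm{id})$ and $(s|_{V_i})_{i\in I}=\cS_G(s,\mathrm{id})$ both lie in $F$, there exists $\bu\in T^I$ with $\bu_i\circ s|_{V_i}=r|_{V_i}$ for every $i\in I$. Setting $\bt:=\mathrm{id}$ then yields $\cS_G(r,\bt)=\cS_G(s,\bu)=:y$, so both $(r,\bt)$ and $(s,\bu)$ lie in $\cS_G^{-1}(y)$.

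The main obstacle is to show that this particular fiber $\cS_G^{-1}(y)$ is irreducible, which would force $(r,\bt)$ and $(s,\bu)$ to sit in the same component. The crucial fact is that $\cS_G$ is $T^I$-equivariant (with the action $\bu\cdot(r',\bt'):=(r',\bu\circ\bt')$ on $\C^{2V}\times T^I$, as recorded in the proof of~\Cref{lem:c}), so by~\Cref{lem:fc} the number of irreducible components of $\cS_G^{-1}(y')$ is constant along $T^I$-orbits in the image. Now~\Cref{prp:PG} combined with~\Cref{lem:cS} give $c(\cS_G)=1$, so the Zariski open locus in the image of $\cS_G$ over which the fiber is irreducible is nonempty and hence dense; being $T^I$-invariant, it either contains the orbit $F$ entirely or is disjoint from it. For general $\lambda$, a genericity argument shows that every orbit $F$ coming from a component of $\cP_G^{-1}(\mu(\lambda))$ meets this locus, so $F$ is contained in it, whence $\cS_G^{-1}(y)$ is irreducible as required.
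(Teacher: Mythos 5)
Your direction~\ref{lem:cor:ii}~$\Rightarrow$~\ref{lem:cor:i} matches the paper's. For~\ref{lem:cor:i}~$\Rightarrow$~\ref{lem:cor:ii}, you take a genuinely different route, and it is worth pointing out that the paper's own argument here is problematic: the paper concludes that $\pi^{-1}(C)=C\times T^I$ ``must be an irreducible component of a fiber of~$\cS_G$,'' but this cannot hold when $I\neq\varnothing$, since $\cS_G(C\times T^I)$ is positive-dimensional; concretely, using \Cref{lem:dim}, one computes that $\dim\pi^{-1}(C)$ exceeds the generic fiber dimension of~$\cS_G$ by~$3|I|$. What the paper's reasoning actually shows is that $\pi^{-1}(C)$ is a component of the fiber of $\mu^{-1}\circ\cP_G\circ\cS_G$ over~$\lambda$, which is strictly weaker than~\ref{lem:cor:ii}. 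Your approach — pass to the irreducible component~$F$ of $\cP_G^{-1}(\mu(\lambda))$ containing $\cS_G(\pi^{-1}(C))$, observe $F$ is a single $T^I$-orbit, extract $\bu\in T^I$ with $\bu_i\circ s|_{V_i}=r|_{V_i}$, and thereby land $(r,\mathrm{id})$ and $(s,\bu)$ in the same fiber $\cS_G^{-1}(y)$ — is the correct idea and repairs the gap; this is essentially the route the paper should have taken.

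Two caveats remain. First, you implicitly require $\lambda$ to be general (so that components of each $\cE_{G_i}^{-1}(\lambda|_{E_i})$ are $T$-orbits), whereas the lemma is stated for ``some~$\lambda$''; since the lemma is only applied with general~$\lambda$ in the proof of \Cref{cor:main}, this restriction is harmless, but you should make it explicit. Second, your final step is a genuine gap: to conclude that $(r,\mathrm{id})$ and $(s,\bu)$ lie in the same irreducible component of $\cS_G^{-1}(y)$ you assert, after correctly invoking $T^I$-equivariance and $c(\cS_G)=1$, that ``a genericity argument'' forces every orbit $F$ in $\cP_G^{-1}(\mu(\lambda))$ to meet the irreducibility locus~$\Sigma$. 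This is not automatic and needs an actual argument — for example, bounding $\dim\cP_G\bigl(\prod_i\C^{2V_i}\setminus\Sigma\bigr)$ using that $\Sigma$ is $T^I$-invariant and that $\cP_G$-fibers carry a free $T^I$-action, so that for general~$\lambda$ no whole component of $\cP_G^{-1}(\mu(\lambda))$ can avoid~$\Sigma$. As written, the claim is asserted rather than proved, and in spirit it is close to what \Cref{cor:main} itself establishes, so one must take care the argument is not circular. You should also note that \Cref{lem:fc} concerns connected, not irreducible, components; these agree on smooth fibers, but you have not verified smoothness of the specific fiber $\cS_G^{-1}(y)$.
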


\begin{proof}
\ref{lem:cor:i}$\Rightarrow$\ref{lem:cor:ii}.
Recall from \Cref{lem:dia} that the diagram at~\EQN{dia} commutes.
It follows that $\pi^{-1}(C)\subset (\mu^{-1}\circ\cP_G\circ\cS_G)^{-1}(\lambda)$
and thus $(r,\bt),(s,\bu)\in\pi^{-1}(C)$ for all $\bt,\bu\in T^I$.
Since $C$ and $T^I$ are irreducible, we conclude that $\pi^{-1}(C)$ must be an irreducible component
of a fiber of~$\cS_G$.

\ref{lem:cor:ii}$\Rightarrow$\ref{lem:cor:i}.
There exists $\lambda\in\C^E$
\st $(r,\bt)$ and~$(s,\bu)$ lie in an irreducible component~$D$
of the fiber~$(\mu^{-1}\circ\cP_G\circ\cS_G)^{-1}(\lambda)$.
Because the diagram at~\EQN{dia} commutes, we deduce that the $\lambda$-compatible realizations~$r$
and~$s$ lie in the subset~$\pi(D)$ of the fiber~$\cE_G^{-1}(\lambda)$.
Since $D$ is irreducible, we conclude that the projected component~$\pi(D)$ must be irreducible as well.
\end{proof}

\begin{proof}[Proof of \Cref{cor:main}]
\ref{cor:main:i}$\Rightarrow$\ref{cor:main:ii}.
It follows from \Cref{lem:cor} that there exists $\bt,\bu\in T^I$
\st $(r,\bt)$ and $(s,\bu)$ belong to an irreducible component of
a fiber of the subgraph map~$\cS_G$.
By definition of the subgraph map, we find that
$\bt_i\circ r|_{V_i}=\bu_i\circ s|_{V_i}$ for all~$i\in I$.
This implies that for all $i\in I$, we have $t\circ r|_{V_i}=s|_{V_i}$
with $t:=\bu_i^{-1}\circ \bt_i$.

\ref{cor:main:ii}$\Rightarrow$\ref{cor:main:i}.
The assumption that there exists~$t\in T$ \st $t\circ r|_{V_i}=s|_{V_i}$ for all~$i\in I$ is,
by the definition of the subgraph map~$\cS_G$ and generality of the edge length assignment~$\lambda$,
equivalent to the assertion that there exists $\bt,\bu\in T^I$
\st $(r,\bt)$ and $(s,\bu)$ lie in a general fiber of~$\cS_G$.
It follows from \Cref{prp:PG} that $P(G)$ is irreducible and thus
by \Cref{lem:cS} the general fiber of~$\cS_G$ is irreducible.
We now conclude from \Cref{lem:cor} that the $\lambda$-compatible realizations
$r$ and $s$ belong to the same irreducible component of the fiber~$\cE_G^{-1}(\lambda)$.

\ref{cor:main:ii}$\Leftrightarrow$\ref{cor:main:iii}.
The max-tight subgraph~$G_i$ is for all $i\in I$
minimally rigid by \Cref{thm:laman} and thus two realizations~$r|_{V_i}$ and $s|_{V_i}$
belong to the same irreducible component of~$\cE_{G_i}^{-1}(\lambda|_{E_i})$
if and only if
these realizations are equal up to composition with some transformation~$t\in T$.
\end{proof}

\section{Application to coupler curves of calligraphs}
\label{sec:app}

In this section, we consider so called ``calligraphs''
that for general edge length assignment and  up to transformations
admit a 1-dimensional set of compatible realizations.
In \Cref{cor:cal}, we characterize invariants of the irreducible components
of ``coupler curves'' of calligraphs, namely curves that are defined by realizations of some distinguished vertex.
This corollary depends on \RP{gal}{b} in \APP{scheme}.
We conclude this section with a short example
for real connected components of sets of compatible realizations.

Before we state \Cref{cor:cal}, let us first recall some definitions from \cite{2023-cal}.

A \df{calligraph} is defined as a graph $G=(V,E)$ with edge $\{1,2\}\in E$
and vertex $0\in V$ \st either $(V,E\cup\{\{1,0\}\})$ or $(V,E\cup\{\{2,0\}\})$ is tight.

The graphs $L$, $R$ and $C_v$ in \Cref{fig:cal}
are for all $v\in\Z_{\geq 3}$ calligraphs and play a special role.
The graphs in \Cref{fig:coupler,fig:main1,fig:main2} are all examples of calligraphs.

\begin{figure}[!ht]
\centering
\csep{5mm}
\begin{tabular}{ccc}
\begin{tikzpicture}[yscale=0.8]
\node[vertex] (a) at (0,0)   [label={[labelsty]below:$1$}] {};
\node[vertex] (b) at (3,0)   [label={[labelsty]below:$2$}] {};
\node[vertex] (c) at (1.5,1.5) [label={[labelsty]above:$0$}] {};
\draw[edge] (a)edge(b) (a)edge(c);
\node[] at (1.5,-0.7) {$L$};
\end{tikzpicture}
&
\begin{tikzpicture}[yscale=0.8]
\node[vertex] (a) at (0,0)   [label={[labelsty]below:$1$}] {};
\node[vertex] (b) at (3,0)   [label={[labelsty]below:$2$}] {};
\node[vertex] (c) at (1.5,1.5) [label={[labelsty]above:$0$ }] {};
\draw[edge] (a)edge(b) (b)edge(c);
\node[] at (1.5,-0.7) {$R$};
\end{tikzpicture}
&
\begin{tikzpicture}[yscale=0.8]
\node[vertex] (a) at (0,0)   [label={[labelsty]below:$1$}] {};
\node[vertex] (b) at (3,0)   [label={[labelsty]below:$2$}] {};
\node[vertex] (c) at (2,0.7) [label={[labelsty]above right:$v$}] {};
\node[vertex] (d) at (1.5,1.5) [label={[labelsty]above:$0$ }] {};
\draw[edge] (a)edge(b) (b)edge(c) (c)edge(d) (c)edge(a);
\node[] at (1.5,-0.7) {$C_v$};
\end{tikzpicture}
\end{tabular}
\caption{Three basic calligraphs, where $v\in \Z_{\geq 3}$.}
\label{fig:cal}
\end{figure}
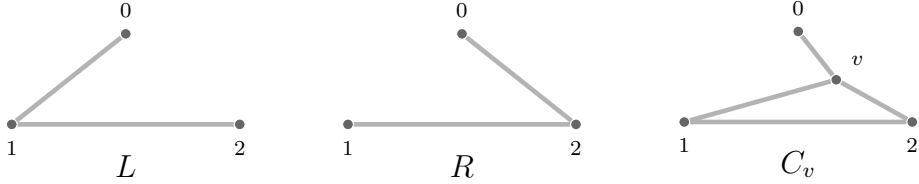

A \df{calligraphic split} of a graph~$G$ is defined as a pair~$(G_1,G_2)$ of calligraphs
$G_1=(V_1,E_1)$ and $G_2=(V_2,E_2)$
\st
\[
G=G_1\cup G_2,\quad
V_1\cap V_2=\{0,1,2\}
\quad\text{and}\quad
E_1\cap E_2=\{\{1,2\}\}.
\]
We define the bilinear map $\_\cdot\_\c\Z^3\times\Z^3 \to\Z$ as
\[
(a_1,b_1,c_1)\cdot (a_2,b_2,c_2)=2\,(a_1\,a_2-b_1\,b_2-c_1\,c_2).
\]
A \df{class} for calligraphs is a function that assigns to each calligraph~$G$
an element $[G]$ of~$\Z^3$ \st the following two axioms are fulfilled.
\begin{enumerate}[topsep=0mm,itemsep=0mm,label=A\arabic*.,ref=A\arabic*]
\item\label{axm:1}
$[L]=(1,1,0)$, $[R]=(1,0,1)$ and $[C_v]=(2,0,0)$ for all $v\in \Z_{\geq3}$.
\item\label{axm:2}
If $(G_1,G_2)$ is a calligraphic split, then $c(G_1\cup G_2)=[G_1]\cdot[G_2]$.
\end{enumerate}

By \citep[Theorem~I]{2023-cal} the class for calligraphs exists and is unique
(see \citep[Theorem~2.6]{2023-sphere} for its spherical counterpart).
Classes of calligraphs can be computed using \citep[Algorithm~1]{2023-cal}
and its software implementation~\cite{2022-m}.

\begin{example}
Let us consider the graphs $H$, $C_5$ and $G=H\cup C_5$ in \Cref{fig:split}
and observe that $(H,C_5)$ is a calligraphic split for~$G$.
We have $[H]=(6,2,2)$, $[C_5]=(2,0,0)$ and thus $c(G)=[H]\cdot [C_5]=24$ by \AXM{2}.
\END
\end{example}

\begin{figure}[!ht]
\centering
\csep{7mm}
\begin{tabular}{ccc}
\begin{tikzpicture}
\node[vertex] (a) at (0,0)   [label={[labelsty]below:$1$}] {};
\node[vertex] (b) at (3,0)   [label={[labelsty]below:$2$}] {};
\node[vertex] (c) at (1,2)   [label={[labelsty]above:$3$}] {};
\node[vertex] (d) at (2,1.5) [label={[labelsty]above:$4$}] {};
\node[vertex] (e) at (1,1)   [label={[labelsty]below:$0$}] {};
\draw[edge] (a)edge(b) (b)edge(d) (d)edge(c) (c)edge(a) (c)edge(e) (e)edge(d);
\node[] at (1.5,-0.7) {$H$};
\end{tikzpicture}
&
\begin{tikzpicture}
\node[vertex] (a) at (0,0)   [label={[labelsty]below:$1$}] {};
\node[vertex] (b) at (3,0)   [label={[labelsty]below:$2$}] {};
\node[vertex] (c) at (2,0.5) [label={[labelsty]above:$5$}] {};
\node[vertex] (d) at (1,1)   [label={[labelsty]above:$0$ }] {};
\draw[edge] (a)edge(b) (b)edge(c) (c)edge(d) (c)edge(a);
\node[] at (1.5,-0.7) {$C_5$};
\end{tikzpicture}
&
\begin{tikzpicture}
\node[vertex] (a) at (0,0)   [label={[labelsty]below:$1$}] {};
\node[vertex] (b) at (3,0)   [label={[labelsty]below:$2$}] {};
\node[vertex] (c) at (1,2)   [label={[labelsty]above:$3$}] {};
\node[vertex] (d) at (2,1.5) [label={[labelsty]above:$4$}] {};
\node[vertex] (e) at (1,1)   [label={[labelsty]below:$0$}] {};
\node[vertex] (f) at (2,0.5) [label={[labelsty]above:$5$}] {};
\draw[edge] (a)edge(b) (b)edge(d) (d)edge(c) (c)edge(a) (c)edge(e) (e)edge(d);
\draw[edge] (f)edge(a) (f)edge(b) (f)edge(e);
\node[] at (1.5,-0.7) {$G=H\cup C_5$};
\end{tikzpicture}
\end{tabular}
\caption{$(H,C_5)$ is a calligraphic split for $G$.}
\label{fig:split}
\end{figure}
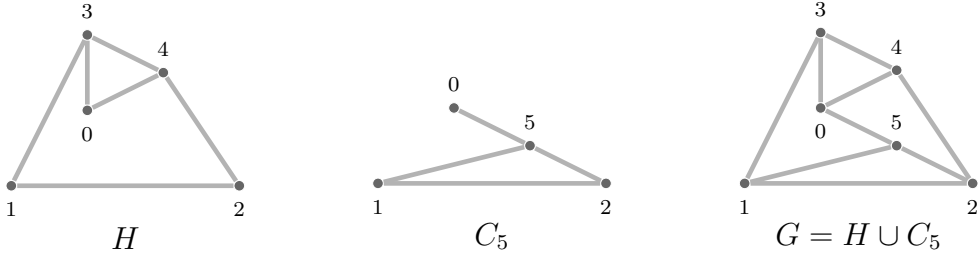

Suppose that $G=(V,E)$ is a calligraph.
Its \df{set of normed edge length assignments} is defined as
$\Omega_G:=\set{\lambda\in\C^E}{\lambda_{\{1,2\}}=1}$.
The \df{coupler curve} of $G$ \wrt normed edge length assignment~$\lambda\in\Omega_G$
is defined as
\[
\CouplerCurve(G,\lambda):=\set{r_0\in \C^2}{r\in \cE_G^{-1}(\lambda),~r_1=(0,0)\text{ and }r_2=(1,0)}.
\]
The \df{coupler multiplicity} of $G$ is defined as
\[
m(G):=|\set{r\in \cE_G^{-1}(\lambda)}{r_0=p,~r_1=(0,0)\text{ and }r_2=(1,0)}|,
\]
where both $\lambda\in\Omega_G$ and $p\in \CouplerCurve(G,\lambda)$ are general.
The coupler multiplicity is well-defined by \citep[Lemma~16]{2023-cal}
and only depends on the graph.

We call a calligraph $G=(V,E)$ \df{thin} if the following two properties are satisfied,
where $H=(V,E\cup\{\{1,0\},\{2,0\}\})$:
\begin{Mlist}
\item If $|V|>3$ and we remove any edge from the graph~$H$, then the resulting graph is tight.
\item If we remove any two of vertices from the graph~$H$,
then the resulting graph remains connected.
\end{Mlist}
We remark that the notion of ``thin'' is closely related to ``generic global rigidity''
(see \citep[Corollary~1.7]{2005} and \citep[Lemma~17]{2023-cal}).

\begin{example}
\label{exm:m}
The left calligraph in \Cref{fig:m} is thin and has coupler multiplicity $1$.
The right calligraph in \Cref{fig:m} has coupler multiplicity $2$,
since for each realization of the vertex~$0$, there are two realizations of the vertex~$3$.
Notice that the right calligraph~$G=(V,E)$ is not thin. Indeed, if
we remove the vertices~$1$ and $2$ from the graph~$(V,E\cup\{\{1,0\},\{2,0\}\})$,
then the resulting graph is not connected and
consists of the isolated vertices~$0$ and~$3$.
\END
\end{example}
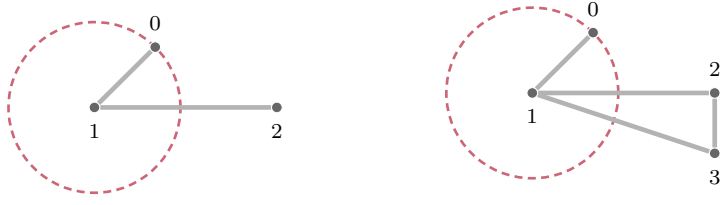
\begin{figure}[!ht]
\centering
\csep{10mm}
\begin{tabular}{cc}
\begin{tikzpicture}[scale=0.8]
\draw[traj] (0,0) circle [radius=1.414];
\node[vertex] (a) at (0,0) [label={[labelsty]below:$1$}] {};
\node[vertex] (b) at (3,0) [label={[labelsty]below:$2$}] {};
\node[vertex] (c) at (1,1) [label={[labelsty]above:$0$}] {};
\draw[edge] (a)edge(b) (a)edge(c);
\end{tikzpicture}
&
\begin{tikzpicture}[scale=0.8]
\draw[traj] (0,0) circle [radius=1.414];
\node[vertex] (a) at (0,0)  [label={[labelsty]below:$1$}] {};
\node[vertex] (b) at (3,0)  [label={[labelsty]above:$2$}] {};
\node[vertex] (c) at (1,1)  [label={[labelsty]above:$0$}] {};
\node[vertex] (d) at (3,-1) [label={[labelsty]below:$3$}] {};
\draw[edge] (a)edge(b) (a)edge(c) (a)edge(d) (b)edge(d);
\end{tikzpicture}
\end{tabular}
\caption{The left and right calligraphs have coupler multiplicities~$1$ and $2$, \resp.
The left calligraph is thin and the right calligraph is not thin.}
\label{fig:m}
\end{figure}

If $C\subset \C^2$ is an irreducible curve,
then we denote by $\Sing C$ its \df{singular locus}.
Recall from \APP{scheme} that $g(C)$ denotes the geometric genus of~$C$.
The following corollary addresses
\citep[Conjecture~2 and Open problem~3]{2023-cal}.

\begin{corollary}
\label{cor:cal}
Suppose that $G=(V,E)$ is a calligraph with class $[G]=(a,b,c)$
and max-tight decomposition $\{G_i\}_{i\in I}$, and let
\[
k:=\prod_{i\in I}c(G_i).
\]
If $G$ is thin and $\lambda\in\Omega_G$ a general edge length assignment,
then $G$ has coupler multiplicity~$m(G)=1$ and there exist $k$ different irreducible curves $C_1,\ldots,C_k\subset \C^2$
\st
\[
\CouplerCurve(G,\lambda)=C_1\cup\cdots\cup C_k
\quad\text{and}\quad
\deg C_1=\ldots=\deg C_k=2\,a/k.
\]
Moreover, if $\alpha:=(a/k,b/k,c/k)$, then for all $1\leq i,j\leq k$,
\[
g(C_i)=g(C_j)\leq \tfrac{1}{2}\,\alpha\cdot \bigl(\alpha-(2,1,1)\bigr) + 1 -|\Sing C_i|.
\]
\end{corollary}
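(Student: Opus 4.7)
The plan is to combine \Cref{thm:main}, \RP{gal}{b} of \APP{scheme}, and the calligraph-class formalism of \cite{2023-cal}. Since $G$ is a calligraph, it is sparse, so \Cref{thm:main} applies and $\cE_G^{-1}(\lambda)$ has exactly $k=\prod_{i\in I}c(G_i)$ irreducible components for general $\lambda$. Imposing $r_1=(0,0)$ and $r_2=(1,0)$ picks a single representative from each $T$-orbit, so by \Cref{cor:main} the resulting subvariety $Z\subset\cE_G^{-1}(\lambda)$ still has $k$ irreducible components $Z_1,\dots,Z_k$.

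Thinness of $G$, together with \citep[Lemma~17]{2023-cal}, gives the coupler multiplicity $m(G)=1$, i.e.\ the projection $\pi_0\c Z\to\C^2$, $r\mapsto r_0$, is generically one-to-one. Consequently each $Z_j$ maps birationally onto an irreducible plane curve $C_j:=\pi_0(Z_j)$, and the $C_j$ are pairwise distinct (otherwise a generic point of a repeated $C_j$ would pick up at least two preimages in $Z$, contradicting $m(G)=1$). Therefore
\[
\CouplerCurve(G,\lambda)=C_1\cup\cdots\cup C_k.
\]

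For the invariants, I would invoke \RP{gal}{b}, which supplies a Galois group acting transitively on $\{C_1,\ldots,C_k\}$. Galois conjugates share all projective and birational invariants, so the degrees $\deg C_j$ and the geometric genera $g(C_j)$ agree across $j$. Since the calligraph class is additive on the irreducible components of the coupler curve and the total class is $[G]=(a,b,c)$, each $C_j$ then has class exactly $\alpha=(a/k,b/k,c/k)$; combined with the degree formula from \cite{2023-cal} (a curve of class $(a,b,c)$ has degree $2a$), this gives $\deg C_j=2a/k$.

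For the genus bound, the expression $\tfrac{1}{2}\alpha\cdot(\alpha-(2,1,1))+1$ is the arithmetic genus of an irreducible curve of class $\alpha$ in the ambient surface underlying the calligraph class, via the adjunction formula $p_a=\tfrac{1}{2}D\cdot(D+K)+1$ with canonical class $-(2,1,1)$. The geometric genus drops below the arithmetic genus by the sum of the $\delta$-invariants of the singular points, each of which is at least~$1$, so $g(C_j)\le p_a(C_j)-|\Sing C_j|$ as claimed. The main obstacle is the Galois-transitivity assertion in \RP{gal}{b} together with the additivity of the class under decomposition of the coupler curve into components; both of these rely on the scheme-theoretic setup deferred to \APP{scheme} and are the genuinely new input beyond \Cref{thm:main}.
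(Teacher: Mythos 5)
Your proposal mirrors the paper's proof in structure and all its essential ingredients: apply \Cref{thm:main} for the component count $k$, fix the frame $r_1=(0,0)$, $r_2=(1,0)$ to pass to the coupler curve, use thinness to get coupler multiplicity $1$ and hence birationality of the projection $\rho$, invoke \RP{gal}{b} for equality of degree and genus across the Galois-conjugate components, and finally appeal to the class formalism of \cite{2023-cal} for the numerical degree and genus bounds. The only cosmetic deviations are that the paper obtains irreducibility of the slices $U_i\cap H$ directly from the $T$-action on each component (rather than via \Cref{cor:main}, which doesn't by itself give irreducibility of the frame-fixed slice), and it cites \citep[Corollary~I(P6)]{2023-cal} for the degree and genus formulas where you sketch the underlying class-additivity and adjunction argument.
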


\begin{proof}
Since $\lambda\in\Omega_G$ is general,
we know from \Cref{thm:main} that
\[
\cE_G^{-1}(\lambda)=U_1\cup\cdots\cup U_k,
\]
where $U_1,\ldots,U_k$ are pairwise different irreducible components.
Let
\[
H:=\set{r\in \C^{2V}}{r_1=(0,0),~r_2=(1,0)}
\quad\text{and}\quad
F:=\cE^{-1}_G(\lambda)\cap H.
\]
The transformation group~$T$ acts on each component~$U_i$
which implies that
\[
F=F_1\cup\cdots\cup F_k,
\]
where $F_i:=U_i\cap H$ is
an irreducible component for all $1\leq i\leq k$.
Let the projection~$\rho\c \C^{2V}\to \C^2$ send the realization~$r$ to the point~$r_0$.
We observe that
\[
\rho(F)=\CouplerCurve(G,\lambda),
\]
and thus $\rho(F_1),\ldots,\rho(F_k)$ are the irreducible components of
$\CouplerCurve(G,\lambda)$.
Because $G$ is a thin calligraph, it follows from \citep[Corollary~I(P2)]{2023-cal}
that the coupler multiplicity~$m(G)$ is equal to~$1$ (see also \citep[Lemma~16]{2023-cal}).
This implies that $\rho|_F$ is birational.
By \RP{gal}{b} in \APP{scheme} applied to the edge map~$\cE_G$ restricted to the subset~$H$ and the projection~$\rho$,
it follows that
$\deg\rho(F_i)=\deg\rho(F_j)$ and $g(\rho(F_i))=g(\rho(F_j))$ for all $1\leq i,j\leq k$.
The proof is now concluded by \citep[Corollary~I(P6)]{2023-cal}.
\end{proof}

\begin{example}
Suppose that $G$ is the graph in \Cref{fig:main1}
and recall from \Cref{exm:main1} that $G$ has max-tight decomposition
$\{G_1,\ldots,G_8\}$ \st $c(G_1)=c(G_2)=4$ and $c(G_3)=\cdots=c(G_8)=1$.
We apply \citep[Algorithm~1]{2023-cal,2022-m} to compute its class
\[
[G]=(272,0,0).
\]
We observe that $G$ is a thin calligraph and
thus it follows from \Cref{cor:cal} that for some general edge length assignment~$\lambda$
its $\CouplerCurve(G,\lambda)$ consist of $16$ irreducible components.
Each component has degree $34$ and geometric genus at most $256$,
since
$(272/16,0/16,0/16)=(17,0,0)$
and
$\tfrac{1}{2}(17,0,0)\cdot(15,-1,-1)+1=256$.
\END
\end{example}

\begin{example}
Suppose that $G$ is the graph in \Cref{fig:main2}
and recall from \Cref{exm:main2} that each of its max-tight subgraphs $G'\subset G$
is an edge so that $c(G')=1$.
We apply \citep[Algorithm~1]{2023-cal,2022-m} to compute its class
\[
[G]=(11,3,3).
\]
Since $G$ is a thin calligraph, it follows from \Cref{cor:cal} that, for some general edge length assignment~$\lambda$,
its $\CouplerCurve(G,\lambda)$ is irreducible of degree~$22$ and its geometric genus is at most~$88=\tfrac{1}{2}(11,3,3)\cdot(9,2,2)+1$.
\END
\end{example}

\begin{remark}[real connected components]
\label{rmk:real}
In this article, we count the number of irreducible components of a set of compatible realizations
over the complex numbers.
The number of real connected components remains an open problem.
One difficulty is that there does not exists a Zariski open set of
edge length assignments \st for each element in this set
the number of real components is the same.

For example, let us consider the graph in \Cref{fig:real}
and the two different edge length assignments for this graph.
In both the left and right case each realization of the vertex~$0$
lies on a circle, where we assume that vertices~$1$ and $2$
are realized as the points $(0,0)$ and $(1,0)$, \resp.
By \Cref{thm:main}, the number of complex irreducible components is equal to~$1$.
\begin{figure}[!ht]
\centering
\csep{10mm}
\begin{tabular}{cc}
\begin{tikzpicture}[scale=0.8]
\draw[traj] (0,0) circle [radius=1.414];
\draw[colR,line width=2pt] ($({1.414*cos(30)},{1.414*sin(30)})$) arc (30:110:1.414);
\draw[colB,line width=2pt] ($({1.414*cos(250)},{1.414*sin(250)})$) arc (250:330:1.414);
\node[vertex] (a) at (0,0)  [label={[labelsty]below:$1$}] {};
\node[vertex] (b) at (3,0)  [label={[labelsty]below:$2$}] {};
\node[vertex] (c) at (1,1)  [label={[labelsty]above:$0$}] {};
\node[vertex] (d) at (3.5,0.5) [label={[labelsty]right:$3$}] {};
\draw[edge] (a)edge(b) (a)edge(c) (b)edge(d) (c)edge(d);
\end{tikzpicture}
&
\begin{tikzpicture}[scale=0.8]
\draw[colR,line width=2pt] (0,0) circle [radius=1.414];
\node[vertex] (a) at (0,0)  [label={[labelsty]below:$1$}] {};
\node[vertex] (b) at (3,0)  [label={[labelsty]below:$2$}] {};
\node[vertex] (c) at (1,1)  [label={[labelsty]above:$0$}] {};
\node[vertex] (d) at (5,1.5) [label={[labelsty]right:$3$}] {};
\draw[edge] (a)edge(b) (a)edge(c) (b)edge(d) (c)edge(d);
\end{tikzpicture}
\end{tabular}
\caption{The set of compatible realizations for the left and right edge length assignments
consists of two and one real connected component, \resp.}
\label{fig:real}
\end{figure}
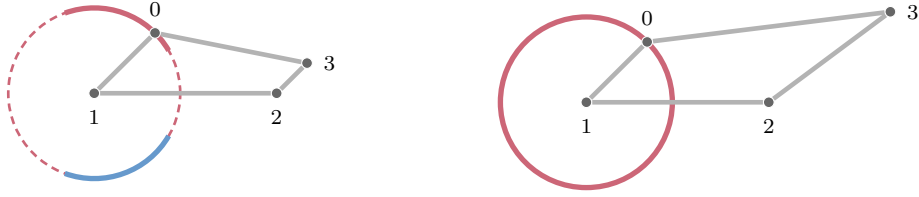

Each realization of the vertex~$0$
corresponds to two realizations of the graph as we can reflect vertex~$3$
along the line containing the realizations of the vertices~$0$ and~$2$.
However, for the left case in \Cref{fig:real}
the vertex~$3$ admits two real realizations if and only if the realization of the vertex~$0$
lies on either the red or blue circular arc.
Hence, the complex irreducible component contains two real connected components.
Moreover, the set of edge length assigments \st the number of real connected components is $2$
is Zariski dense in the set of all edge length assignments.

If we increase the length of the edges~$\{0,3\}$ and $\{2,3\}$ as in right case of \Cref{fig:real},
then the number of real connected components equals $1$.
Notice that the set of edge length assigments \st there is one real connected component
is Zariski dense as well.

We refer to the classical ``Grashof conditions for four bar linkages'' for a more detailed analysis
of the graphs in \Cref{fig:real}.
\END
\end{remark}

\section{On the higher dimensional setting}
\label{sec:dim}

In this section we present a counter example for a naive 3-dimensional generalization of
our main result \Cref{thm:main}. We characterize graphs for which we expect
that our methods also work in the higher dimensional setting, namely
graphs that admit a ``max-rigid decomposition''.

We can define an edge map $(\C^n)^V\to\C^E$ of a graph $G=(V,E)$ for any dimension~$n\geq 2$
and thus one may ask for the number~$c(G)$ of irreducible components
of the general fiber of this edge map when $n\geq 3$.
We may assume that this edge map is dominant and thus
$G$ is rigid if the general fiber is finite.
However, if $n\geq 3$, then tightness is not equivalent to rigidity.
To resolve this issue, let us call a subgraph $G'\subset G$ \df{max-rigid} if it is rigid and there exists
no rigid subgraph~$G''$ \st $G'\subsetneq G''\subset G$.
We call $\{G_i\}_{i\in I}=\{(V_i,E_i)\}_{i\in I}$ a \df{max-rigid decomposition} for~$G$
if $G_i\subset G$ is a max-rigid subgraph for all $i\in I$,
$V=\cup_{i\in I}V_i$, $E=\cup_{i\in I} E_i$
and $E_i\cap E_j=\varnothing$ for all different~$i,j\in I$.

If $G$ admits a max-rigid decomposition, then we believe that our methods
can be adapted to the higher dimensional setting.
However, the problem is that in the higher dimensional setting not every graph admits a max-rigid decomposition.

For example, suppose that $n=3$ and consider the graph~$G$ as defined in
\Cref{fig:3d}. Its max-rigid subgraphs are $G_1$, $G_2$ and $G_3$,
where $G_1$ is the triangle with vertices $\{1,2,3\}$,
$G_2$ is the triangle with vertices $\{2,3,4\}$,
and $G_3$ is the subgraph of~$G$ spanned by the vertices $\{1,4,5,6,7\}$.
Notice that the triangles $G_1$ and $G_2$ have a common edge~$\{2,3\}$,
and thus $\{G_1,G_2,G_3\}$ is \emph{not} a max-rigid decomposition of~$G$.

\begin{figure}[!ht]
\centering
\begin{tikzpicture}[scale=0.4]
\node[vertex] (1) at (-7,0)   [label={[labelsty]left:$1$}] {};
\node[vertex] (2) at (-4,5)  [label={[labelsty]left:$2$}] {};
\node[vertex] (3) at ( 4,5)  [label={[labelsty]right:$3$}] {};
\node[vertex] (4) at ( 7,0)   [label={[labelsty]right:$4$}] {};
\node[vertex] (5) at (-0.5,-2)   [label={[labelsty]above:$5$}] {};
\node[vertex] (6) at ( 1, 0.5)  [label={[labelsty]above:$6$}] {};
\node[vertex] (7) at ( 2,-4)  [label={[labelsty]below:$7$}] {};
\draw[edge] (1)edge(2) (2)edge(3) (3)edge(4) (2)edge(4) (1)edge(3)
            (1)edge(5) (1)edge(6) (1)edge(7)
            (6)edge(4) (5)edge(4) (7)edge(4)
            (5)edge(6) (6)edge(7) (7)edge(5);
\end{tikzpicture}
\caption{A graph~$G$ \st for a general choice of edge lengths
its set of realizations into 3-dimensional complex space
consists of $c(G)=8$ irreducible components.
The number of realizations of its three
max-rigid subgraphs are $1$, $1$ and $4$, \resp.}
\label{fig:3d}
\end{figure}

Now let us consider the realization of $G$ into $\R^3\subset\C^3$ as is illustrated in \Cref{fig:3d}.
We denote by $H_{uvw}\subset \R^3$ the plane spanned by the points corresponding to the vertices~$\{u,v,w\}$.
We may assume up to rotations and translations that the coordinates for the
vertices $\{5,6,7\}$ are the same for each realization of~$G$.
Since $G_1$ and $G_2$ are triangles, we have
\[
c(G_1)=c(G_2)=1.
\]
We obtain different realizations of $G_3$ by reflecting the vertices $1$ and/or $4$ about the plane~$H_{567}$,
and thus
\[
c(G_3)=2\cdot 2=4.
\]
Let us now determine $c(G)$.
We observe that the max-rigid subgraph~$G_3$ can rotate about the line spanned
by the vertices $1$ and $4$.
In addition to reflecting vertices~$1$ and~$4$
about the plane~$H_{567}$, it is also possible to either
\begin{Mlist}
\item reflect vertex $2$ about the plane $H_{134}$, or reflect vertex $3$ about the plane $H_{124}$.
\end{Mlist}
The latter two reflections result in two realizations
of $G$ that are related via translations and rotations.
Hence, we find that
\[
c(G)=c(G_1)\cdot c(G_2)\cdot c(G_3)\cdot 2=8.
\]
Notice that if we replace the subgraph~$G_3$ in a realization of $G$ with an edge, then the edges of the
resulting graph coincide with the edges of a tetrahedron.
The additional factor two in the factorization of $c(G)$ corresponds to different orientations of this tetrahedron.

The generalization of \Cref{thm:main} to dimension $n\geq 3$ remains an open problem.

\appendix
\addappheadtotoc

\section{The fiber product lemma}
\label{sec:fiber}

The goal of this appendix is to prove \Cref{lem:XxX}.
For this we require scheme theoretic language, but
we provide precise references for the non-expert.

We assume that varieties are affine, but not necessarily irreducible.
If $X$ is a variety,
then $\C[X]$ and $\C(X)$ denote the \df{function ring}
and \df{function field} of~$Y$, respectively
(see \citep[pages~4 and 16]{1977} where function ring is called ``coordinate ring'').
The \df{generic fiber} of a morphism $f\c X\to Y$ between irreducible varieties $X$ and $Y$
is defined as
\[
\Spec \C[X]\otimes_{\C[Y]}\C(Y).
\]
We refer to \citep[pages~xvi, 16, 87 and 89, and Exercise~II.2.7 at page~80]{1977}
or \citep[Definition~1.5 in Chapter~3]{2002L}
for more details about the fiber at a ``generic point'',
which is closely related to our notion of general point.

\begin{lemma}
\label{lem:ring}
Suppose that $f\c X\to Y$ is a dominant morphism between irreducible varieties.
\begin{claims}
\item\label{lem:ring:a}
The fiber product $X\times_f X$ is irreducible
if and only if
the tensor product
\[
\C[X]\otimes_{\C[Y]}\C[X]
\]
is an integral domain.
\item\label{lem:ring:b}
The fiber $f^{-1}(y)$ is irreducible for general~$y\in Y$
if and only if
\[
\C[X]\otimes_{\C[Y]}\C(Y)
\]
is an integral domain.
\end{claims}
\end{lemma}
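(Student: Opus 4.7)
The strategy for both parts is to convert the geometric statement into commutative algebra via the standard dictionary that an affine scheme $\Spec R$ is integral (i.e., irreducible and reduced) if and only if $R$ is an integral domain. The task in each case is to identify the coordinate ring of the relevant geometric object as the asserted tensor product, and (for part (b)) to bridge the gap between the generic fiber and a general closed fiber.

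For part (a), the first step is to identify $\C[X]\otimes_{\C[Y]}\C[X]$ as the coordinate ring of the scheme-theoretic fiber product $X\times_f X$. This is the universal property of the fiber product of affine schemes, dual to the universal property of the pushout of $\C[Y]$-algebras; see \citep[Chapter~II]{1977}. Concretely, $X\times_f X$ sits inside $X\times X=\Spec(\C[X]\otimes_\C\C[X])$ as the closed subscheme cut out by the relations $f^*(g)\otimes 1-1\otimes f^*(g)$ for $g\in\C[Y]$, and the ideal generated by these relations is precisely the kernel of the canonical surjection $\C[X]\otimes_\C\C[X]\twoheadrightarrow\C[X]\otimes_{\C[Y]}\C[X]$. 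The ring-scheme dictionary then delivers the equivalence.

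For part (b), the coordinate ring of the generic fiber is $\C[X]\otimes_{\C[Y]}\C(Y)$ directly from the definition supplied at the start of the appendix, so the dictionary gives that the generic fiber is integral if and only if the tensor product is a domain. What remains is the translation from the generic fiber to the general closed fiber. Here the plan is a spread-out argument: take a finite $\C[Y]$-algebra presentation of $\C[X]$, find a nonzero $g\in\C[Y]$ over which the primary decomposition of the generic fiber's coordinate ring lifts to a primary decomposition over $\C[Y][g^{-1}]$, and observe that every closed fiber lying in the principal open $D(g)\subset Y$ inherits the same combinatorial structure in both directions.

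The main obstacle is the generic-versus-general step in part (b). The underlying subtlety is that the residue field at the generic point of~$Y$ is $\C(Y)$, which is not algebraically closed, so irreducibility of the generic fiber does not formally match irreducibility of nearby closed fibers (the two notions differ by Galois-type splittings). The spread-out argument handles this in the geometric setting relevant to applications: the morphisms to which \Cref{lem:XxX} is ultimately applied are submersions between smooth varieties, and in that context the scheme-theoretic and geometric notions of irreducibility coincide. The parallel reducedness concern in part (a) — whether the tensor product might carry nilpotents invisible to the set-theoretic fiber product — is similarly moot in the application, since the smoothness of $X\times_f X$ guaranteed by \RL{XxX}{a} forces reducedness.
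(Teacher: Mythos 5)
You correctly identify the two subtleties that the paper's one-line proof (``a direct consequence of the definitions'') leaves unaddressed: reducedness of $\C[X]\otimes_{\C[Y]}\C[X]$ in part~(a), and the passage from the generic fiber to general closed fibers in part~(b). Your treatment of~(a) is fine. The gap is in your resolution of~(b). You claim that for submersions between smooth irreducible varieties ``the scheme-theoretic and geometric notions of irreducibility coincide,'' so that irreducibility of $\Spec\bigl(\C[X]\otimes_{\C[Y]}\C(Y)\bigr)$ over $\C(Y)$ passes to the general closed fiber. This fails: the map $f\c\C\setminus\{0\}\to\C\setminus\{0\}$, $x\mapsto x^2$, is a submersion between smooth irreducible curves, and $\C[X]\otimes_{\C[Y]}\C(Y)\cong\C(t)[x]/(x^2-t)$ is a field and hence a domain, yet $f^{-1}(a)=\{\pm\sqrt{a}\}$ is reducible for every closed~$a$. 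Smoothness only buys reducedness, not geometric irreducibility; the Galois splitting you flagged as the obstacle is exactly what occurs here.

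The deeper problem is that \Cref{lem:ring}\ref{lem:ring:b} is false as stated, so no argument can close this gap without modifying the claim. \Cref{lem:gf}, later in this same appendix, shows that $\C[X]\otimes_{\C[Y]}\C(Y)$ is \emph{always} an integral domain when $f$ is a dominant morphism between irreducible varieties, being a localization of the domain $\C[X]$; combined with \Cref{lem:ring}\ref{lem:ring:b} this would say that every such $f$ has irreducible general closed fibers, which the $x\mapsto x^2$ example refutes. The condition your spread-out plan actually needs is geometric integrality of the generic fiber, i.e.\ that $\C[X]\otimes_{\C[Y]}\overline{\C(Y)}$ is a domain, which in characteristic~$0$ is equivalent to $f^\star\C(Y)$ being algebraically closed in $\C(X)$. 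That algebraic-closure condition is precisely what \Cref{lem:step2} supplies in the chain proving \Cref{lem:XxX}\ref{lem:XxX:b}; the subsequent detour through $\C[X]\otimes_{\C[Y]}\C(Y)$ via \Cref{lem:step3} and \Cref{lem:ring}\ref{lem:ring:b} uses the wrong intermediate quantity and should be replaced by the direct implication from algebraic closure of $f^\star\C(Y)$ in $\C(X)$ to geometric integrality of the generic fiber, after which a spread-out argument of the kind you sketch does finish the job.
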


\begin{proof}
Assertions~\ref{lem:ring:a} and~\ref{lem:ring:b} are a direct consequence of the
definitions.
\end{proof}

\begin{lemma}
\label{lem:tensor}
If $R$ is an $A$-module and $S\subset A$ a multiplicatively closed subset,
then
\[
R\otimes_A S^{-1}A\cong S^{-1}R.
\]
\end{lemma}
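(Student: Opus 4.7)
The plan is to construct an explicit isomorphism by using the universal property of the tensor product in one direction and building a direct inverse in the other.

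First, I would define a map $\varphi\c R\otimes_A S^{-1}A\to S^{-1}R$ by specifying it on pure tensors as $\varphi(r\otimes(a/s))=(ar)/s$. To justify the existence of $\varphi$, I would verify that the associated map $R\times S^{-1}A\to S^{-1}R$ sending $(r,a/s)\mapsto (ar)/s$ is well-defined (independent of the representative $a/s$, using the defining equivalence relation of $S^{-1}A$), $\C$-bilinear, and $A$-balanced, i.e.\ satisfies $(br, a/s)\mapsto ((ab)r)/s=(r,ba/s)$. The universal property of $\otimes_A$ then produces $\varphi$ uniquely.

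Next, I would construct a candidate inverse $\psi\c S^{-1}R\to R\otimes_A S^{-1}A$ by the formula $\psi(r/s):=r\otimes(1/s)$. The main obstacle is showing that $\psi$ is well-defined. If $r/s=r'/s'$ in $S^{-1}R$, then by the definition of module localization there exists $t\in S$ with $t(s'r-sr')=0$ in $R$. I would then compute, using the $A$-balanced property of the tensor symbol,
\[
r\otimes(1/s)=(ts'r)\otimes\bigl(1/(tss')\bigr)=(tsr')\otimes\bigl(1/(tss')\bigr)=r'\otimes(1/s'),
\]
where the middle equality uses $ts'r=tsr'$. Additivity and $\C$-linearity of $\psi$ are straightforward.

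Finally, I would check $\varphi\circ\psi=\mathrm{id}_{S^{-1}R}$ directly on $r/s$, and $\psi\circ\varphi=\mathrm{id}_{R\otimes_A S^{-1}A}$ on pure tensors $r\otimes(a/s)$ by noting $\psi(\varphi(r\otimes(a/s)))=\psi((ar)/s)=(ar)\otimes(1/s)=r\otimes(a/s)$, using again that $a\in A$ can be moved across the tensor symbol. Since pure tensors generate, this concludes the isomorphism. The whole argument is a standard exercise in commutative algebra; the only delicate point is the well-definedness of $\psi$ in the module (as opposed to algebra) setting, which is precisely handled by the vanishing-witness $t\in S$ appearing in the definition of $S^{-1}R$.
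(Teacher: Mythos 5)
Your proof is correct and complete. The paper itself gives no argument here; it simply cites Atiyah--Macdonald, Proposition~3.5, and your write-up supplies the proof that reference contains. You take the explicit-inverse route: define $\varphi$ by the universal property, then construct $\psi$ directly and verify its well-definedness using the witness $t\in S$ from the definition of $S^{-1}R$. Atiyah--Macdonald instead prove bijectivity of $\varphi$ (their $f$) alone, by first observing that every element of $R\otimes_A S^{-1}A$ can be rewritten in the form $r\otimes(1/s)$ (by clearing denominators across the tensor sign) and then checking injectivity on such elements. The two arguments are of essentially the same length and difficulty; yours has the slight advantage of producing a named two-sided inverse, while theirs avoids the need to separately verify that $\psi$ respects the equivalence relation on $S^{-1}R$, since the same $t\in S$ trick is folded into a single injectivity check.

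One small terminological slip: you describe the map $R\times S^{-1}A\to S^{-1}R$ as ``$\C$-bilinear, and $A$-balanced.'' Since the lemma is stated over an arbitrary commutative ring $A$ (no $\C$-algebra structure is assumed), the correct condition to invoke the universal property of $\otimes_A$ is that the map be $\Z$-bilinear (biadditive) and $A$-balanced, or equivalently $A$-bilinear. This does not affect the argument, but the mention of $\C$ is an artifact of the ambient paper and should be dropped from a self-contained proof of the lemma.
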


\begin{proof}
See for example \citep[Proposition~3.5]{1969} for this well-known result.
\end{proof}

\begin{lemma}
\label{lem:step1}
If $\C[X]\otimes_{\C[Y]}\C[X]$ is an integral domain for varieties $X$ and $Y$,
then $\C(X)\otimes_{\C(Y)}\C(X)$
is an integral domain as well.
\end{lemma}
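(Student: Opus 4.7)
The plan is to realize $\C(X)\otimes_{\C(Y)}\C(X)$ as a localization of the ring $R:=\C[X]\otimes_{\C[Y]}\C[X]$ at a multiplicative subset that avoids $0$. Denote by $\iota_1,\iota_2\c \C[X]\to R$ the two canonical ring homomorphisms $\iota_1(a)=a\otimes 1$ and $\iota_2(b)=1\otimes b$. Let $S:=\C[X]\setminus\{0\}$ and let $\Sigma\subset R$ be the multiplicative subset generated by $\iota_1(S)\cup\iota_2(S)$; its elements are products of the form $\iota_1(s)\iota_2(t)$ with $s,t\in S$.

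First, I would identify $\Sigma^{-1}R$ with $\C(X)\otimes_{\C(Y)}\C(X)$ by iterated application of \Cref{lem:tensor}. Localizing the $\C[X]$-module $R$ (via $\iota_1$) at $S$ yields
\[
\iota_1(S)^{-1}R \;\cong\; \C(X)\otimes_{\C[X]}\bigl(\C[X]\otimes_{\C[Y]}\C[X]\bigr) \;\cong\; \C(X)\otimes_{\C[Y]}\C[X],
\]
and a further localization at $\iota_2(S)$ yields $\C(X)\otimes_{\C[Y]}\C(X)$. Since the image of $\C[Y]\setminus\{0\}$ is contained in $S$ under the structural inclusion $\C[Y]\hookrightarrow\C[X]$, each element of $\C[Y]\setminus\{0\}$ is already invertible in both tensor factors, so $\C(X)\otimes_{\C[Y]}\C(X)=\C(X)\otimes_{\C(Y)}\C(X)$.

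Next I would show that $0\notin\Sigma$. Consider the diagonal ring homomorphism $\Delta\c R\to\C[X]$ defined by $a\otimes b\mapsto ab$, which is well defined and $\C[Y]$-balanced. The compositions $\Delta\circ\iota_j$ are the identity on $\C[X]$ for $j=1,2$, so each $\iota_j$ is split injective. In particular, $\iota_1(s)\ne 0$ and $\iota_2(t)\ne 0$ for all $s,t\in S$, and since $R$ is an integral domain by hypothesis, the product $\iota_1(s)\iota_2(t)$ is nonzero. Hence $0\notin\Sigma$.

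Finally, a localization of an integral domain at a multiplicative subset not containing $0$ is again an integral domain, as it embeds into the fraction field of $R$. Combined with the identification $\Sigma^{-1}R\cong \C(X)\otimes_{\C(Y)}\C(X)$ from the first step, this concludes the proof. The only mildly technical point is the bookkeeping in the first step, namely verifying that the iterated localization really produces the tensor product over the function field $\C(Y)$; the use of the diagonal map to establish injectivity of $\iota_1,\iota_2$ and the remaining steps are essentially formal.
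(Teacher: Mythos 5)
Your proof is correct and follows the same overall strategy as the paper (realize $\C(X)\otimes_{\C(Y)}\C(X)$ as a localization of $R:=\C[X]\otimes_{\C[Y]}\C[X]$ and invoke that localizing an integral domain at a multiplicative set not containing $0$ gives an integral domain). In fact, your version is more careful at the one nontrivial point. The paper asserts that $\C(X)\otimes_{\C(Y)}\C(X)$ equals $S^{-1}R$ with $S=R\setminus\{0\}$; under the hypothesis that $R$ is a domain this localization is the fraction field of $R$, which is in general strictly larger than $\C(X)\otimes_{\C(Y)}\C(X)$. (Already for $X=\C$, $Y=\mathrm{pt}$ one has $R=\C[t,s]$, fraction field $\C(t,s)$, while $\C(t)\otimes_\C\C(s)$ does not contain $\tfrac{1}{t-s}$.) Your choice of $\Sigma$ — the multiplicative set generated by $\iota_1(S)\cup\iota_2(S)$ — is the correct one, the iterated use of \Cref{lem:tensor} identifies $\Sigma^{-1}R$ with $\C(X)\otimes_{\C(Y)}\C(X)$ as you say, and the observation that $0\notin\Sigma$ via the diagonal map plus integrality of $R$ is exactly the right check. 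Since $\Sigma\subset R\setminus\{0\}$, the paper's final step (a subring of the fraction field is a domain) still yields the correct conclusion, but your identification of the localization set is the accurate one. Two small points worth making explicit: the argument tacitly uses that $X$ is irreducible (so $\C[X]$ is a domain and $\C(X)$ its fraction field) and that the comorphism $\C[Y]\to\C[X]$ is injective (dominance), both of which hold in the setting where this lemma is invoked in the proof of \Cref{lem:XxX}.
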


\begin{proof}
If $R:=\C[X]\otimes_{\C[Y]}\C[X]$, then its localization at the multiplicatively closed set~$S=R\setminus\{0\}$
is as a direct consequence of the definitions at \citep[Ch.~3]{1969} equal to
\[
S^{-1}R=\C(X)\otimes_{\C(Y)}\C(X).
\]
Since $R$ is an integral domain, $S^{-1}R$ is an integral domain as well.
\end{proof}

\begin{lemma}
\label{lem:step2}
If $\E/\F$ is a field extension \st
$\E\otimes_\F \E$ an integral domain,
and $e\in \E$ is algebraic over~$\F$, then $e\in \F$.
\end{lemma}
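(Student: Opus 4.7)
The plan is to argue by contradiction: suppose $e \notin \F$ and let $p(x) \in \F[x]$ be its minimal polynomial, so $d := \deg p \ge 2$. Set $R := \E \otimes_\F \E$, and consider the two elements $\alpha := e \otimes 1$ and $\beta := 1 \otimes e$ of $R$. Viewing $\F \subset R$ diagonally, both $\alpha$ and $\beta$ are roots of $p$ in $R$, since, for instance, $p(\alpha) = p(e) \otimes 1 = 0$. I then embed $\E \hookrightarrow R$ via $a \mapsto a \otimes 1$, which is injective because $\E$ is a field, and factor $p(x) = (x - e)\, p_1(x)$ in $\E[x]$ with $p_1$ of degree $d-1 \ge 1$. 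Pushing this factorization into $R[x]$ and substituting $x = \beta$ yields
\[
0 \;=\; p(\beta) \;=\; (\beta - \alpha)\, p_1(\beta),
\]
so it suffices to show that both $\beta - \alpha$ and $p_1(\beta)$ are nonzero in $R$ to contradict the hypothesis that $R$ is an integral domain.

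For the first factor, since $e \notin \F$ the pair $\{1, e\}$ is $\F$-linearly independent; extending it to an $\F$-basis $\{b_k\}_k$ of $\E$ yields the $\F$-basis $\{b_k \otimes b_\ell\}_{k,\ell}$ of $R$, in which $e \otimes 1$ and $1 \otimes e$ are two distinct basis vectors, so $\beta - \alpha \neq 0$. For the second factor, I refine the choice so that $\{b_k\}$ contains $\{1, e, \ldots, e^{d-1}\}$ as its first $d$ vectors; writing each coefficient $c_i \in \E$ of $p_1(x) = \sum_{i=0}^{d-1} c_i x^i$ in this basis and expanding
\[
p_1(\beta) \;=\; \sum_{i=0}^{d-1} c_i \otimes e^i
\]
as an $\F$-linear combination of the basis vectors $b_k \otimes e^i$ of $R$, we find that since $p_1 \ne 0$ at least one $c_i$ is nonzero, so the combination is nontrivial and $p_1(\beta) \neq 0$. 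The only delicate point is this last basis argument: it relies on arranging that the powers $1, e, \ldots, e^{d-1}$ appear among the basis vectors of $\E$ over $\F$, so that the second tensor slots $e^i$ index genuinely distinct basis vectors of $R$. Once this is set up, the contradiction with $R$ being an integral domain is immediate and the lemma follows.
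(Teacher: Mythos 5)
Your proof is correct and is essentially the same argument as the paper's: you exhibit a zero divisor built from $e \otimes 1 - 1 \otimes e$ and a cofactor coming from the minimal polynomial, then use an $\F$-basis of $\E$ containing $1, e, \dots, e^{d-1}$ to show both factors are nonzero. The only cosmetic difference is that you obtain the cofactor by factoring $p(x) = (x-e)p_1(x)$ in $\E[x]$ and evaluating at $1 \otimes e$, whereas the paper derives the identical factorization directly via the telescoping identity $x^i - y^i = (x-y)\sum_j x^j y^{i-1-j}$.
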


\begin{proof}
Suppose by contradiction that $e\notin \F$.
Let $p(t)=\sum^d_{i=1}p_i\cdot t^i$
be the minimal polynomial of~$e$
so that $d\geq 2$ and $p_d=1$.
Since $p(e)=0$, we have
\[
0=p(e)\otimes 1-1\otimes p(e)=\sum^d_{i=1}p_i\cdot\left(e^i\otimes 1-1\otimes e^i\right).
\]
We observe that for all $1\leq i\leq d$,
\[
x^i-y^i=(x-y)\cdot\sum_{j=0}^{i-1}x^jy^{i-1-j},
\]
and thus
\[
e^i\otimes 1-1\otimes e^i=(e\otimes 1-1\otimes e)\cdot \sum_{j=1}^{i-1} e^j\otimes e^{i-1-j}.
\]
We establised that $\alpha\cdot\beta=0$, where
\[
\alpha:=e\otimes 1-1\otimes e
\qquad\text{and}\qquad
\beta:=\sum^d_{i=1}\sum_{j=1}^{i-1}p_i\cdot e^j\otimes e^{i-1-j}.
\]
Since the minimal polynomial~$p(t)$ is of degree~$d$, we find that
the following set is linear independent:
\[
\set{e^i\otimes e^j}{1\leq i,j\leq d-1}.
\]
This implies that $\beta\neq 0$,
and thus $\alpha$ and $\beta$ are zero divisors in $\E\otimes_\F \E$.
We arrived at a contradiction as $\E\otimes_\F \E$ is an integral domain.
\end{proof}

\begin{lemma}
\label{lem:step3}
Suppose that $f\c X\to Y$ is a dominant morphism between varieties
and let $f^\star\c \C(Y)\to \C(X)$ be the homomorphism between function fields
that sends the function~$\alpha$ to the composition~$\alpha\circ f$.
Let $\E:=\C(X)$ and $\F:=f^\star(\C(Y))$ so that $\E/\F$ is a field extension.
If $\F$ is algebraically closed in $\E$,
then $\C[X]\otimes_{\C[Y]}\C(Y)$ is an integral domain.
\end{lemma}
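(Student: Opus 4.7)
The strategy is to identify $\C[X] \otimes_{\C[Y]} \C(Y)$ with a localization of the coordinate ring $\C[X]$ and observe that this localization is a subring of the function field $\C(X)$. The identification is immediate from \Cref{lem:tensor}: I would set $S := \C[Y] \setminus \{0\}$ so that $\C(Y) = S^{-1}\C[Y]$, and view $\C[X]$ as a $\C[Y]$-module via the ring map $f^\star$. That lemma then gives
\[
\C[X] \otimes_{\C[Y]} \C(Y) \;=\; \C[X] \otimes_{\C[Y]} S^{-1}\C[Y] \;\cong\; (f^\star S)^{-1} \C[X],
\]
where $(f^\star S)^{-1} \C[X]$ denotes the localization of $\C[X]$ at the image $f^\star(S)$.

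Next I would observe that since $f \c X \to Y$ is dominant between irreducible varieties, the pullback $f^\star \c \C[Y] \to \C[X]$ is injective, so $f^\star(S)$ is a multiplicative subset of the integral domain $\C[X]$ that avoids zero. Consequently the localization $(f^\star S)^{-1} \C[X]$ sits inside the fraction field $\C(X) = \operatorname{Frac}(\C[X])$ as a subring, and is therefore itself an integral domain. Combined with the displayed isomorphism, this yields that $\C[X] \otimes_{\C[Y]} \C(Y)$ is an integral domain, as required.

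I do not anticipate any substantive obstacle here: once \Cref{lem:tensor} is in hand, the remainder is the standard fact that a localization of a domain at a multiplicative set excluding zero remains a domain. The only point needing a brief justification is the injectivity of $f^\star$, which is immediate from dominance of $f$ together with irreducibility of $X$ and $Y$. It is worth remarking that the algebraic closure hypothesis on $\F \subset \E$ is not actually invoked in the argument; the conclusion holds for any dominant morphism between irreducible affine varieties. That hypothesis has already done its work in the preceding \Cref{lem:step2} and is merely carried through the statement of \Cref{lem:step3} so that the chain of implications culminating in \Cref{lem:XxX}\ref{lem:XxX:b} lines up with \Cref{lem:ring}\ref{lem:ring:b}.
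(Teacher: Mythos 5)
Your localization argument is correct as a proof of the statement \emph{as literally printed}, and indeed it is word for word the paper's own proof of the neighbouring Lemma~\ref{lem:gf} in Appendix~\ref{sec:scheme}, which asserts the identical conclusion assuming only that $f$ is a dominant morphism between irreducible varieties. The paper's proof of Lemma~\ref{lem:step3} takes a different route: it argues by contradiction, decomposes $\Spec R$ with $R:=\C[X]\otimes_{\C[Y]}\C(Y)$ into $k\geq 2$ components, produces an element $r\in R$ annihilated by $(t-1)\cdots(t-k)\in\C(Y)[t]$, invokes the algebraic-closedness of $\F$ in $\E$ to force $r\in\F$, and then derives a contradiction because $r$ separates the components. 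You should notice, however, that the paper's own proof asserts $R\cong S^{-1}\C[X]\subset\E$ partway through, which already gives the contradiction that you found; so the minimal-polynomial argument and the hypothesis are, for the printed conclusion, redundant in both proofs.

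Where your proposal goes wrong is the closing paragraph, which rationalizes the unused hypothesis instead of treating it as a warning sign. If $\C[X]\otimes_{\C[Y]}\C(Y)$ is \emph{always} a domain (as you and Lemma~\ref{lem:gf} establish), then feeding this into Lemma~\ref{lem:ring}\ref{lem:ring:b} would show that the general fiber of \emph{every} dominant morphism between smooth irreducible varieties is irreducible, which is false: for $f\c\C^\times\to\C^\times$, $x\mapsto x^2$, one has $\C[x,x^{-1}]\otimes_{\C[y,y^{-1}]}\C(y)\cong\C(y)[t]/(t^2-y)$, a field, yet the general fiber consists of two points, and indeed $X\times_f X$ is reducible here so Lemma~\ref{lem:XxX}\ref{lem:XxX:b} does not apply. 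The point is that irreducibility of the scheme-theoretic generic fiber $\Spec\bigl(\C[X]\otimes_{\C[Y]}\C(Y)\bigr)$ over the non--algebraically-closed field $\C(Y)$ is strictly weaker than irreducibility of the general closed fiber over $\C$; the latter is controlled by the \emph{geometric} generic fiber, i.e.\ by $\C[X]\otimes_{\C[Y]}\overline{\C(Y)}$ (equivalently $\C[X]\otimes_{\C[Y]}K$ for every field extension $K/\C(Y)$) being a domain. This is exactly what the hypothesis ``$\F$ algebraically closed in $\E$'' buys, since in characteristic zero it is equivalent to $\E\otimes_\F K$ being a domain for every extension $K/\F$. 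So the correct conclusion of Lemma~\ref{lem:step3}, and the correct equivalence in Lemma~\ref{lem:ring}\ref{lem:ring:b}, should involve $\overline{\C(Y)}$; the hypothesis is not decorative, and a proof that discards it -- yours, or a citation of Lemma~\ref{lem:gf} -- proves a statement too weak to support the deduction in Lemma~\ref{lem:XxX}\ref{lem:XxX:b}.
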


\begin{proof}
Suppose by contradiction that
$R:=\C[X]\otimes_{\C[Y]}\C(Y)$ is not an integral domain.
In this case, the generic fiber of~$f$ decomposes into $k\geq 2$ irreducible components:
\[
\Spec R=C_1\cup\cdots\cup C_k.
\]
By Sard's theorem the generic fiber is smooth and thus its irreducible components coincide
with the connected components.
Let $r\in R$ define the function $\Spec R\to \C$
that sends $c$ to $i$ if $c\in C_i$ for all $1\leq i\leq k$
and consider the polynomial
\[
\tilde{p}:=(t-1)\cdots(t-k).
\]
Notice that the composition $\tilde{p}\circ r$ vanishes identically.
Moreover, we observe that $\tilde{p}\in \C(Y)[t]$ is the minimal polynomial of $r$.
With the multiplicative set~$S:=\C[Y]\setminus\{0\}$, we find that
\[
R=\C[X]\otimes_{\C[Y]}S^{-1}\C[Y].
\]
We know from \Cref{lem:tensor} that $R\cong S^{-1}\C[X]$.
Since $S^{-1}\C[X]\subset \C(X)=\E$, there exists an injective ring homomorphism
\[
\varphi\c R \to \E,
\]
\st $\varphi(\C(Y))=\F$.
It follows that $\varphi(r)\in\E$ has a nonzero minimal polynomial in~$\F[t]$.
As $\F$ is algebraically closed in~$\E$ by assumption, we established that $\varphi(r)\in \F$.
This implies that $\varphi(r)$ defines a rational function~$X\to \C$
that is the composition of $f\c X\to Y$ with some function $\alpha\c Y\to\C$.
We arrived at a contradiction as $(\alpha\circ f)(C_i)=(\alpha\circ f)(C_j)$
although $r(C_i)\neq r(C_j)$
for all $1\leq i<j\leq k$.
\end{proof}

\begin{proof}[Proof of \Cref{lem:XxX}.]
It follows from \citep[Proposition~III.10.4]{1977}
that $f\c X\to Y$ is a smooth morphism between irreducible affine varieties.

\ref{lem:XxX:a} This assertion follows from \citep[Proposition~III.10.1(d)]{1977}.

\ref{lem:XxX:b} Since $X\times_f X$ is irreducible,
we know from \RL{ring}{a} that $\C[X]\otimes_{\C[Y]}\C[X]$ is an integral domain.
Hence, it follows from \Cref{lem:step1} that $\C(X)\otimes_{\C(Y)}\C(X)$ is an integral domain as well.
We apply \Cref{lem:step2} with $\E=\C(X)$ and $\F=f^\star(\C(Y))$,
and find that $\F$ is algebraically closed in $\E$.
Therefore, $\C[X]\otimes_{\C[Y]}\C(Y)$ is an integral domain by \Cref{lem:step3}.
We conclude from \RL{ring}{b} that the general fiber of~$f$ is irreducible.
\end{proof}

\section{Invariants of components of fibers}
\label{sec:scheme}

The goal of this appendix is to prove \Cref{prp:gal,cor:inv}, which concerns
algebro geometric invariants of irreducible components of sets
of realizations that are compatible with some general edge length assignment.
We assume the notation of \APP{fiber}.

If $X\subset \C^n$ is a variety, then its \df{dimension},
\df{degree} and \df{arithmetic genus} are denoted by $\dim X$, $\deg X$ and $p_a(X)$,
\resp~(see \citep[\textsection I.7 and Exercise~I.7.2]{1977}).
If $X$ is a curve, then we denote by $g(X)$ its \df{geometric genus} \citep[Remark~II.8.18.2]{1977}.

\begin{lemma}
\label{lem:gf}
If $f\c X\to Y$ is a dominant morphism between irreducible varieties,
then $\C[X]\otimes_{\C[Y]}\C(Y)$ is an integral domain.
\end{lemma}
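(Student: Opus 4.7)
The plan is to reduce the claim to the elementary fact that a localization of an integral domain at a multiplicatively closed set of nonzero elements is again an integral domain.

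First I would observe that since $f\c X\to Y$ is dominant, the induced pullback homomorphism $f^\star\c \C[Y]\to\C[X]$ is injective, so it makes sense to regard nonzero elements of $\C[Y]$ as nonzero elements of $\C[X]$. Let $S:=\C[Y]\setminus\{0\}$, which is a multiplicatively closed subset of $\C[Y]$ since $\C[Y]$ is an integral domain (as $Y$ is irreducible). Then $\C(Y)=S^{-1}\C[Y]$ by definition of the function field. Applying \Cref{lem:tensor} with $A=\C[Y]$, $R=\C[X]$ viewed as a $\C[Y]$-module via $f^\star$, yields the isomorphism
\[
\C[X]\otimes_{\C[Y]}\C(Y)\;=\;\C[X]\otimes_{\C[Y]} S^{-1}\C[Y]\;\cong\;S^{-1}\C[X],
\]
where on the right we localize $\C[X]$ at the image $f^\star(S)$.

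Next I would note that $\C[X]$ is an integral domain because $X$ is irreducible, and that $f^\star(S)$ consists entirely of nonzero elements of $\C[X]$ by the injectivity of $f^\star$ established above. Therefore $S^{-1}\C[X]$ embeds into the fraction field $\C(X)$ of $\C[X]$, and in particular is an integral domain. Combining this with the isomorphism of the previous paragraph yields the desired conclusion.

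There is no serious obstacle here: the only subtlety is making sure that the multiplicative set one localizes at lands in the nonzerodivisors of $\C[X]$, which is exactly what dominance of $f$ provides. The proof is essentially a one-line application of \Cref{lem:tensor} together with the dominance hypothesis.
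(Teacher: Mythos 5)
Your proof is correct and takes essentially the same route as the paper: apply \Cref{lem:tensor} to identify $\C[X]\otimes_{\C[Y]}\C(Y)$ with $S^{-1}\C[X]$ for $S=\C[Y]\setminus\{0\}$, then use that a localization of an integral domain (at a set of nonzero elements) is a domain. You are, if anything, slightly more careful than the paper in spelling out that dominance makes $f^\star$ injective, which is exactly what guarantees one is not inverting zero.
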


\begin{proof}
Notice that $\C[X]$ is a $\C[Y]$-module
via the pullback $f^*\c\C[Y]\to\C[X]$.
By \Cref{lem:tensor}, $\C[X]\otimes_{\C[Y]}\C(Y)\cong S^{-1}\C[X]$,
where $S:=\C[Y]\setminus\{0\}$ is a multiplicatively closed set.
Since $\C[X]$ is an integral domain, its localization is integral as well.
\end{proof}

\begin{proposition}
\label{prp:gal}
Suppose that $f\c X\subset\C^n\to Y$ is a dominant morphism between smooth and irreducible varieties~$X$ and~$Y$.
Let $C$ and $C'$ be different components of the fiber~$f^{-1}(y)$
for some general~$y\in Y$.
\begin{claims}
\item\label{prp:gal:a}
$C$ is smooth, $C\cap C'=\varnothing$, $\dim C=\dim C'$, $\deg C=\deg C'$ and $p_a(C)=p_a(C')$.
\item\label{prp:gal:b}
If $\dim C=1$, $\rho\c \C^n\to \C^m$ is the projection to the first $m\leq n$ coordinates and its restriction~$\rho|_{f^{-1}(y)}$
is birational, then
\[
\deg \rho(C)=\deg\rho(C')
\quad\text{and}\quad
g(\rho(C))=g(\rho(C')).
\]
\end{claims}
\end{proposition}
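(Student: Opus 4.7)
My first step is to reduce to the setting where $f$ is smooth: by generic smoothness (Sard's theorem) I may shrink $Y$ to a Zariski open subset on which $f$ is a smooth morphism, so every fiber $f^{-1}(y)$ is a smooth subvariety of $X$. The irreducible components of $f^{-1}(y)$ then coincide with its connected components and are themselves smooth, which gives smoothness of $C$ as well as $C\cap C'=\varnothing$ for any two distinct components. Equidimensionality $\dim C=\dim C'=\dim X-\dim Y$ follows from the theorem on fiber dimension \citep[\textsection I.6.3, Theorem~7]{1994}.

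To compare degrees and arithmetic genera, I pass to the projective closure $\bar X\subset\P^n$ and extend $f$ to a proper morphism $\bar f\c\bar X\to Y$ (shrinking $Y$ again if needed). Form the Stein factorization $\bar f=g\circ h$ with $h\c\bar X\to Z$ having geometrically connected fibers and $g\c Z\to Y$ finite; since $X$, and hence $\bar X$, is irreducible, $Z=\Spec_Y\bar f_*\mathcal{O}_{\bar X}$ is irreducible too. Shrinking $Y$ further, I may assume $g$ is étale and $h$ is flat by generic flatness. The components of $f^{-1}(y)$ are then in bijection with $g^{-1}(y)=\{z_1,\dots,z_k\}$ via $\bar C_i=h^{-1}(z_i)$. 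As $h$ is flat and proper, the Hilbert polynomial of $h^{-1}(z)\subset\P^n$ is locally constant in $z\in Z$, and irreducibility of $Z$ makes it constant. This yields $\deg\bar C_i=\deg\bar C_j$ and $p_a(\bar C_i)=p_a(\bar C_j)$ for all $i,j$, hence the asserted equalities since these projective invariants coincide with the affine ones by definition.

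For part (b), the geometric genera agree via the chain $g(\rho(C))=g(C)=p_a(C)=p_a(C')=g(C')=g(\rho(C'))$: birationality of $\rho|_C$ inherited from $\rho|_{f^{-1}(y)}$ preserves geometric genus of curves, smoothness of $C$ forces $g(C)=p_a(C)$, and $p_a(C)=p_a(C')$ is part (a). For the degrees, I pick a generic affine hyperplane $H\subset\C^m$ and set $W:=X\cap\rho^{-1}(H)$. For generic $H$ and each component $C_i$, birationality of $\rho|_{C_i}\c C_i\to\rho(C_i)$ yields $|C_i\cap W|=\deg\rho(C_i)$. In the Stein factorization picture the induced map $W\to Z$ has fiber $C_i\cap W$ over $z_i$; the cardinality of this finite fiber is locally constant in $z$ by generic flatness, hence constant on the irreducible base $Z$, so $\deg\rho(C)=\deg\rho(C')$.

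The principal technical obstacle is the careful setup of the Stein factorization for the non-proper $f$: compactifying while preserving irreducibility, checking that $Z$ inherits irreducibility from $\bar X$, and simultaneously shrinking $Y$ to a single Zariski open where $g$ is étale, $h$ is flat, and $W\to Z$ is flat. Transferring Hilbert polynomial statements on the projective closures to the affine invariants is then routine for quasi-projective varieties in $\C^n$.
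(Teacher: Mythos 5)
Your approach to part~\ref{prp:gal:a} is genuinely different from the paper's. The paper argues entirely at the level of the function field~$\C(Y)$: the generic fiber $\Spec\C[X]\otimes_{\C[Y]}\C(Y)$ is irreducible by \Cref{lem:gf}, and after base change to a splitting field $\E\supset\C(Y)$ the Galois group of $\E/\C(Y)$ permutes the resulting components and therefore identifies their Hilbert polynomials. You instead compactify $X$, form the Stein factorization $\bar f = g\circ h$ with $h\c\bar X\to Z$ connected and $g\c Z\to Y$ finite, and invoke flatness of $h$ over the irreducible base $Z$ to force constancy of the Hilbert polynomial. Both routes can be made to work and yield the same numerical invariants; the flat-family route is more geometric and avoids splitting fields, while the paper's route avoids compactification and Stein factorization altogether. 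For part~\ref{prp:gal:b} the genus chain you use is essentially the same reduction to $p_a(C)=p_a(C')$ that the paper makes, while your degree argument (generic affine hyperplane section plus flatness of $W\to Z$) replaces the paper's observation that $\rho(Z)$ remains irreducible over $\C(Y)$ under elimination, so the Galois argument applies again.

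There are, however, real gaps to close. First, $f$ does not in general extend to the projective closure $\bar X\subset\P^n$, and shrinking $Y$ cannot repair an indeterminacy locus on $\bar X$; you need to take $\bar X$ to be the closure of the graph of $f$ in $\P^n\times Y$, which keeps the fibers inside $\P^n$ while making $\bar f$ proper. Second, the identification $\bar C_i=h^{-1}(z_i)$ is not automatic: one must argue that the general fiber of $h$ is irreducible and reduced (this holds in characteristic~$0$ via the Stein factorization and generic smoothness/reducedness) and that the boundary $\bar X\setminus X$ meets $\bar f^{-1}(y)$ in strictly lower dimension, so that $C_i$ is dense in $h^{-1}(z_i)$; without this the flat fiber $h^{-1}(z_i)$ could strictly contain $\bar C_i$ and the Hilbert polynomial comparison would be for the wrong scheme. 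Third, in part~\ref{prp:gal:b} the step $g(C)=p_a(C)$ does \emph{not} follow from smoothness of the affine curve $C$: with $p_a$ defined via the Hilbert polynomial of the projective closure $\bar C$ (Exercise~I.7.2 of~\citep{1977}), the closure $\bar C$ may be singular at infinity, in which case $g(C)<p_a(\bar C)=p_a(C)$ — the smooth affine cubic $y=x^3$ with cuspidal closure already shows this. This link therefore needs a separate justification; the paper's own treatment of this step is routed through different references and does not go through the claim that affine smoothness of $C$ suffices.
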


\begin{proof}
\ref{prp:gal:a}
Since $f$ is a morphism between differentiable manifolds,
it follows from Sard's theorem that the general fiber of $f$ is smooth,
which implies that $C$ is smooth and $C\cap C'=\varnothing$.
It follows from \Cref{lem:gf} that the generic fiber
\[
Z:=\Spec \C[X]\otimes_{\C[Y]}\C(Y)
\]
is irreducible over the function field $\C(Y)$.
Suppose that $\E/\C(Y)$ is an extension field \st $Z=Z_1\cup Z_2\cup\cdots\cup Z_k$
decomposes into $k\geq 2$ irreducible components,
where $Z_i$ is defined by polynomials with coefficients in~$\E$ for all $1\leq i\leq k$.
By \citep[Proposition~I.7.6 at page~52 and Exercise~I.7.2 at page~54]{1977}
there exists for all $1\leq i\leq k$ a unique \df{Hilbert polynomial}~$P_{Z_i}\in \C(Y)[t]$
with \df{leading coefficient}~$\lc(P_{Z_i})\in \Z_{>0}$
\st
$\dim Z_i=\deg P_{Z_i}$,
$\deg Z_i=(\deg P_{Z_i})!\cdot\lc(P_{Z_i})$
and
$
p_a(Z_i)=(-1)^{\deg P_{Z_i}}\cdot(P_{Z_i}(0)-1).
$
The Galois group of the field extension~$\E/\C(Z)$
induces a field isomorphism~$\C(Z_i)\to \C(Z_j)$ for all $1\leq i<j\leq k$.
It follows from the procedure for computing Hilbert polynomials
as described at~\citep[\textsection9.3]{2007}
that this field isomorphism identifies $P_{Z_i}$ with $P_{Z_j}$
so that
$\deg P_{Z_i}=\deg P_{Z_j}$, $\lc(P_{Z_i})=\lc(P_{Z_j})$
and $P_{Z_i}(0)=P_{Z_j}(0)$.
The proof is now concluded as a direct consequence of the definitions
of generic and general fibers.

\ref{prp:gal:b}
The ideal of~$\rho(Z)$ is obtained from the ideal of~$Z$ by
elimination of variables (see \citep[\textsection3.2,Theorem~3]{2007}).
Since $Z$ is irreducible over the function field~$\C(Y)$,
we deduce that $\rho(Z)$ is irreducible over~$\C(Y)$ as well.
It follows from \citep[Theorem~II.8.19, Remark~III.7.12.2 and Exercise~IV.1.8]{1977}
that $g(\rho(C))=p_a(C)$.
The same arguments as before conclude the proof.
\end{proof}

The following simple example is to clarify
\Cref{prp:gal} and its proof.

\begin{example}
\label{exm:irr}
Suppose that $f\c \C^3\to \C^2$ maps $(x,y,z)$ to $(x^2,z)$.
The generic fiber is defined as $\Spec\F[x,y,z]/\langle x^2-s,z-t\rangle$,
where $\F:=\set{p/q}{p,q\in\C[s,t],q\neq 0}$ denotes the
function field of~$\C^2$.
Although $x^2-s$ is irreducible over the function field~$\F$, it factors as $(x-u)(x+u)$
over the extension field $\E:=\F(u)/\langle u^2-s\rangle$.
The Galois group of this field extension $\E/\F$ is generated by a field automorphism
that sends $u$ to $-u$ and leaves~$\F$ elementwise invariant.
If the projection $\rho\c \C^3\to \C^2$ maps $(x,y,z)$ to $(x,y)$,
then the image of the generic fiber of $f$ is $\Spec\F[x,y]/\langle x^2-s\rangle$.
Hence, the image of the generic fiber is irreducible over the field~$\F$.
This implies that the irreducible components $\Spec\E[x,y]/\langle x-u\rangle$
and $\Spec\E[x,y]/\langle x+u\rangle$
have the same dimension, degree and arithmetic genus.
\END
\end{example}

\begin{corollary}
\label{cor:inv}
Suppose that $G=(V,E)$ is a sparse graph, $\lambda\in\C^E$ a general edge length assignment,
and let $C,C'$ be different irreducible components in the set~$\cE_G^{-1}(\lambda)\subset\C^{2V}$
of $\lambda$-compatible realizations.
Then $C$ is smooth,
\[
C\cap C'=\varnothing,\quad
\dim C=\dim C',\quad
\deg C=\deg C'
\quad\text{and}\quad
p_a(C)=p_a(C').
\]
\end{corollary}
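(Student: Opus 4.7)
The proof reduces directly to \RP{gal}{a} applied to the edge map itself. The plan is to take $X:=\C^{2V}$, $Y:=\C^E$ and $f:=\cE_G$ in the setup of \Cref{prp:gal}. Both $\C^{2V}$ and $\C^E$ are affine spaces, hence smooth and irreducible, so the smoothness and irreducibility hypotheses on $X$ and $Y$ are immediate. The dominance of $\cE_G$ is supplied by \RL{em}{a}, since by assumption $G$ is sparse. Thus all the hypotheses of \RP{gal}{a} are satisfied.

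With $y:=\lambda$ a general edge length assignment and $C,C'$ two distinct irreducible components of $\cE_G^{-1}(\lambda)=f^{-1}(y)$, \RP{gal}{a} yields exactly the four conclusions of the corollary, namely that $C$ is smooth, $C\cap C'=\varnothing$, and
\[
\dim C=\dim C',\qquad \deg C=\deg C',\qquad p_a(C)=p_a(C').
\]
Note that the notion of ``general'' used in the corollary agrees with the one in \RP{gal}{a}: both refer to a point of $\C^E$ lying outside some proper Zariski closed subset, which in particular excludes the Sard-type locus where the fiber fails to be smooth.

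There is no real obstacle in this argument; the entire substance was packaged into \Cref{prp:gal}, whose proof handles the Galois-theoretic transfer of invariants between the components via Hilbert polynomials over the function field~$\C(Y)$. The only point worth emphasizing in the write-up is verifying that the hypotheses of \Cref{prp:gal} are literally met, i.e.\ that $\C^{2V}$ and $\C^E$ are smooth irreducible varieties and that the edge map is everywhere defined and dominant, so that the statement applies without further modification.
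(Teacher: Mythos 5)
Your proof is correct and matches the paper's argument exactly: both set $X:=\C^{2V}$, $Y:=\C^E$, $f:=\cE_G$, invoke \RL{em}{a} for dominance, and then apply \RP{gal}{a}. The extra remarks on verifying the hypotheses and on the meaning of ``general'' are fine but not a different route.
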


\begin{proof}
The edge map~$\cE_G\c \C^{2V}\to \C^E$ is dominant by \RL{em}{a}
and thus the proof is concluded by applying \RP{gal}{a} to~$\cE_G$ with $\C^{2V}\cong\C^{2|V|}$.
\end{proof}

\section*{Acknowledgements}
\addcontentsline{toc}{section}{Acknowledgements}

Financial support for Niels Lubbes and Mehdi Makhul was provided by the Austrian Science Fund (FWF): P36689 and P33003.

\bibliography{comp}

\textbf{Addresses of authors:}
\\Institute for Algebra, Research Institute for Symbolic Computation (RISC),
\\Johannes Kepler University, Linz, Austria
\\\url{info@nielslubbes.com}
\\\url{josef.schicho@risc.jku.at}
\\[2mm]
Johann Radon Institute for Computational and Applied Mathematics (RICAM),
\\Austrian Academy of Sciences, Linz, Austria
\\\url{mehdi.makhul@oeaw.ac.at}
\\\url{audie.warren@oeaw.ac.at}
\end{document}